\DeclareSymbolFontAlphabet{\mathcal}{symbols}
\theoremstyle{plain}
\newtheorem{theorem}{Theorem}
\newtheorem{proposition}{Proposition}[section]
\newtheorem{lemma}[proposition]{Lemma}
\newtheorem{corollary}[proposition]{Corollary}
\theoremstyle{definition}
\newtheorem{definition}[proposition]{Definition}
\newtheorem{example}[proposition]{Example}
\theoremstyle{remark}
\newtheorem{remark}[proposition]{Remark}
\newcommand{\secref}[1]{Section~\ref{#1}}
\newcommand{\thmref}[1]{Theorem~\ref{#1}}
\newcommand{\propref}[1]{Proposition~\ref{#1}}
\newcommand{\lemref}[1]{Lemma~\ref{#1}}
\newcommand{\remref}[1]{Remark~\ref{#1}}
\newcommand{\exemref}[1]{Example~\ref{#1}}
\newcommand{\defref}[1]{Definition~\ref{#1}}
\def\R{{\mathbb R}}
\def\ov{\overline}
\def\ob{\underline}
\def\cD{{\mathcal D}}
\def\cE{{\mathcal E}}
\def\cF{{\mathcal F}}
\def\cH{{\mathcal H}}
\def\cL{{\mathcal L}}
\def\cN{{\mathcal N}}
\def\cP{{\mathcal P}}
\def\cU{{\mathcal U}}
\def\cZ{{\mathcal Z}}
\def\tp{\mathtt p}
\def\tv{{\mathtt v}}
\def\C{\mathbb{C}}
\def\F{\mathbb{F}}
\def\N{\mathbb{N}}
\def\R{\mathbb{R}}
\def\Z{\mathbb{Z}}
\def\ker{{\rm Ker\,}}
\def\im{{\rm Im\,}}
\def\id{{\rm id}}
\def\tC{{\widetilde{C}}}
\def\tN{{\widetilde{N}}}
\def\tdelta{{\widetilde{\Delta}}}
\def\fil{{{\pmb\Delta}^{[n]}_\cF}{ -Sets}}
\def\GM{{\rm GM}}
\def\TW{{\rm TW}}
\def\ds{\boldsymbol{\mathcal P}}
\def\ffs{{filtered face set}}
\def\ffss{{filtered face sets}}
\def\ev{{\tt eval}}
\def\res{{\tt Res}}
\def\cPloose{{\cP_{{\rm loose}}^n}}
\def\sq{{\rm Sq}}
\def\sqg{{\rm Sq}_{\rm\scriptscriptstyle G}}
\def\IN{{\rm {\mathbf{IN}}}}
\def\cov{{\rm Cov}}
\def\bSS{{\rm{\mathbf{S}}}}
\def\Th{{\rm Th}}
\def\da{{\delta^a}}
\def\dca{{\delta^{ca}}}
\def\db{{\delta^b}}
\def\dcb{{\delta^{cb}}}
\def\dab{{\delta^{a\ast b}}}
\def\dcab{{\delta^{c(a\ast b)}}}
\def\dcacb{{\delta^{ca\otimes cb}}}
\def\dcatb{{\delta^{ca\otimes b}}}
\def\lhs{{\tt LHS}}
\def\rhs{{\tt RHS}}
\def\sd{{\rm sd}}
\title[Intersection cohomology and Steenrod squares]{Steenrod squares on Intersection cohomology and a conjecture of M.~Goresky and W.~Pardon} 
\date{\today}
\date{\today}
\author{David Chataur}
\address{D\'epartement de Math\'ematiques\\
         UMR 8524 et F\'ed\'eration CNRS Nord-Pas-de-Calais FR 2956\\
         Universit\'e de Lille~1\\
         59655 Villeneuve d'Ascq Cedex\\
         France}
\email{David.Chataur@math.univ-lille1.fr}
\author{Martintxo Saralegi-Aranguren}
\address{Laboratoire de Math{\'e}matiques de Lens\\  
      EA 2462 et F\'ed\'eration CNRS Nord-Pas-de-Calais FR 2956\\
      Universit\'e d'Artois\\
         SP18, rue Jean Souvraz\\
          62307 Lens Cedex\\
         France}
\email{saralegi@euler.univ-artois.fr}
\author{Daniel Tanr\'e}
\address{D\'epartement de Math\'ematiques\\
         UMR 8524 et F\'ed\'eration CNRS Nord-Pas-de-Calais FR 2956\\
         Universit\'e de Lille~1\\
         59655 Villeneuve d'Ascq Cedex\\
         France}
\email{Daniel.Tanre@univ-lille1.fr}
\thanks{The third author is partially supported by the MICINN grant MTM2010-18089,  ANR-11-BS01-002-01 ``HOGT" and ANR-11-LABX-0007-01  ``CEMPI''}
\subjclass[2000]{55N33, 55S10, 57N80}
\begin{document} 
\begin{abstract} 
We prove a conjecture raised by M.~Goresky and W.~Pardon, 
concerning the range of validity of the perverse degree of Steenrod squares in intersection cohomology. 
This answer turns out to be  of importance for the definition of characteristic classes
 in the framework of intersection cohomology.

For this purpose, we present a  construction of ${\rm cup}_{i}$-products on the cochain complex, 
built on the blow-up of some singular simplices and introduced in a previous work. 
We extend to this setting the classical properties of the associated Steenrod squares, 
including Adem and Cartan relations, for any loose perversities. 
In the case of a PL-pseudomanifold and range $2\ov{p}$, 
we prove that our definition coincides with M.~Goresky's definition. We  show also that our Steenrod squares are  topological invariants which do not depend on the choice of a stratification of $X$. 
 
Several examples of concrete computation of perverse Steenrod squares are given, 
including the case of isolated singularities and, more especially, 
we describe the Steenrod squares on the Thom space of a vector bundle, 
as a function of the Steenrod squares of the base space and the Stiefel-Whitney classes of the bundle. 
We detail also an example of a non-trivial square, $\sq^2\colon H_{\ov{p}}\to H_{\ov{p}+2}$, 
whose information is lost if we consider it as taking values in $H_{2\ov{p}}$, 
showing the interest of the Goresky-Pardon conjecture.
\end{abstract}

\maketitle

Intersection cohomology was introduced by M.~Goresky and R.~MacPherson in \cite{MR572580} and \cite{MR696691}, in order to adapt Poincar\'e duality to singular manifolds and extend characteristic classes to this paradigm.
Steenrod squares on the intersection cohomology of a pseudomanifold, $X$,  were already  defined and studied by M. Goresky in \cite{MR761809}. For that, he uses a sheaf introduced by Deligne and proves that the Steenrod construction of ${\rm cup}_{i}$-products induces a morphism,
$\sq^i_{G}\colon H^r_{\ov{p}}(X;\F_{2})\to H^{r+i}_{2\ov{p}}(X;\F_{2})$, for any Goresky-MacPherson perversity $\ov{p}$ such that $2\ov{p}(\ell)\leq \ell-2$ for any $\ell$ and with $\F_{2}$ the field with two elements.

Here, we consider the blow-up, $\tN^*(X)$, of the normalized cochain complex on a filtered version of the singular simplicial set associated to $X$. 
This notion of blow-up,  defined in \cite{2012arXiv1205.7057C} and recalled in \secref{sec:perversity},  comes from a version adapted to differential forms already existent in \cite{MR1143404}. 
The elements of $\tN^*(X)$ have a perverse degree (see \defref{def:tranversedegree}) which allows the definition of a complex, $\tN^*_{\ov{p}}(X)$, for  any loose perversity $\ov{p}$. 
In \cite{2012arXiv1205.7057C}, we have proved that the blow-up,
$\tC^*(X)$, gives the Goresky-MacPherson intersection cohomology of the pseudomanifold $X$, 
for the complementary perversity, when we are working over a field. 
With \propref{prop:CandN}, the blow-up  $\tN^*(X)$ inherits this property; we denote its cohomology by 
$H^*_{\TW,\bullet}(X;\F_{2})$.

When the coefficients of $\tN^*(X)$ are in $\F_{2}$, we define a structure of ${\rm cup}_{i}$-products,
$\cup_{i}\colon \tN^*_{\ov{p}}(X)\otimes \tN^*_{\ov{q}}(X)\to \tN^*_{\ov{p}+\ov{q}}(X)$,
for any loose perversities $\ov{p}$, $\ov{q}$. This is done following the work of C.~Berger and B.~Fresse in \cite{MR2075046} (see also \cite{MR0281196}):
we consider a normalized, homogeneous Bar resolution, $\cE(2)$, of the symmetric group $\Sigma_{2}$ and prove that there exists a $\Sigma_{2}$-equivariant cochain map,
$\psi_{2}\colon \cE(2)\otimes \tN^*_{\ov{p}}(X)\otimes \tN^*_{\ov{q}}(X)\to \tN^*_{\ov{p}+\ov{q}}(X)
$. Such a map is called \emph{a structure of perverse $\cE(2)$-algebra on $\tN^*_{\bullet}(X)$;} its construction comes from the existence of a diagonal on $\cE(2)$, established in \cite{MR2075046}. Moreover, we prove in \thmref{thm:NXE2} that the ${\rm cup}_{i}$-products arising from the existence of $\psi_{2}$ verify the two following  properties,
$a\cup_{|a|}a=a$ and $a\cup_{i}a'=0$, if $i\geq \min(|a|,|a'|)$ where $|a|$, $|a'|$ are the respective degrees of $a$ and $a'$. 
 
The definition of perverse $\cE(2)$-algebras can be extended to perverse $\cE(n)$-algebras, for any $n$.  As this work is concerned with Steenrod squares, we consider only perverse $\cE(2)$-algebras over $\F_{2}$. Nevertheless, it is clear that our methods of proof can be enhanced to give a structure of perverse $E_{\infty}$-algebras over $\Z$ on $\tN^*_{\bullet}(X)$.  We will come back on these points in a forthcoming paper.

As usual, Steenrod squares are defined on $H^k_{\TW,\ov{p}}(X;\F_{2})$ by
$\sq^i(a)=a\cup_{k-i}a$. Using  May's presentation of Steenrod squares in \cite{MR0281196}, we see that the classical properties of Steenrod squares are direct consequences of the structure of perverse $\cE(2)$-algebra. We collect them, together with Adem and Cartan relations, in \thmref{thm:steenrodsquare}. (One may observe that the proof of the Adem relation on a tensor product needs a brief incursion in the world of perverse $\cE(4)$-algebras over $\F_{2}$.)

In \thmref{thm:steenrodsquare}, we also answer positively to the problem asked by M.~Goresky in \cite[Page~493]
{MR761809} and to the conjecture made by M.~Goresky and W.~Pardon in \cite[Conjecture 7.5]{MR1014465}. 
This problem concerns the range of  the perversities: with the definition of Steenrod squares 
via the ${\rm cup}_{i}$-products, it is clear that $\sq^i$ sends $H^k_{\TW,\ov{p}}(X;\F_{2})$
 into $H^{k+i}_{\TW,2\ov{p}}(X;\F_{2})$. 
 We prove that, in fact, there is a lifting as a map, 
 $\sq^i\colon H^k_{\TW,\ov{p}}(X;\F_{2})\to H^{k+i}_{\TW,\cL(\ov{p},i)}(X;\F_{2})$, 
where $\cL(\ov{p},i)$ is the loose perversity defined by 
$\cL(\ov{p},i)(\ell)=$\linebreak
$\min(2\ov{p}(\ell), \ov{p}(\ell)+i)$,
which is exactly  
\cite[Conjecture 7.5]{MR1014465}.
This reveals an important fact because it allows the lifting of Wu classes in  intersection cohomology, in a lower part of the poset of perversities.

In \thmref{thm:Goreskyandblowup}, we prove that our definition of Steenrod squares coincides with Goresky's definition introduced in \cite{MR761809}. 
For doing that, we transform the blow-up, $\tN^*_{\bullet}$, into a sheaf $\IN^*_{\bullet}$ on $X$ and prove that $\IN^*_{\bullet}$ is isomorphic to the Deligne sheaf, in the derived category of sheaves on $X$. 
The rest of the proof comes from a unicity theorem for Steenrod squares defined on an injective sheaf, established by M.~Goresky, \cite{MR761809}.

We end this part of the work with examples of concrete computation of perverse Steenrod squares, 
beginning with the case of isolated singularities. 
From it, we are able to write the Steenrod squares on the intersection cohomology of the Thom space associated to a
 vector bundle, as a function of the Steenrod squares of the base space and the Stiefel-Whitney classes of the bundle. 
We detail also an example of a non-trivial square, 
$\sq^2\colon H_{\TW,\ov{p}}(X;\F_{2})\to H_{\TW,\cL(\ov{p},2)}(X;\F_{2})$, 
whose information
 is lost if we consider it as values in $H_{\TW,2\ov{p}}$, showing the interest of the Goresky-Pardon conjecture. 
This last example can also be seen as a tubular neighborhood of a stratum, 
which is the first step in the study of intersection cohomology of pseudomanifolds.

In \thmref{thm:topinvariance}, we prove that Steenrod squares,
$\sq^i\colon H^r_{\TW,\ov{p}}(X;\F_{2})\to H^{r+i}_{\TW,\cL(\ov{p},i)}(X;\F_{2})$,
are topological invariants when $X$ is a PL-pseudomanifold. This completes the result of \cite{MR761809}
that the Steenrod squares are topological invariants, as homomorphisms
$H^r_{\ov{p}}(X;\F_{2})\to H_{2\ov{p}}^{r+i}(X;\F_{2})$.
The proof is combinatorial, using  the  description of Steenrod squares made by Steenrod in 
\cite{MR0022071}.

We emphasize now some  particularities which are important in the process of the proof of the Goresky-Pardon conjecture.
The main point is that our technique allows an explicit construction of the ${\rm cup}_{i}$-products at the level of cochain complexes, without requiring the derived category for their definition. 
In the context of filtered objects, observe first that the notion of filtered singular simplices is a natural one,
see \remref{rem:ffspseudomanifold}. 

The second modus operandi is the blow-up of these simplices. 
 In differential geometry, a blow-up is the replacement of a sub manifold $N$ of a manifold $M$ by the boundary of a tubular neighborhood of $N$ in $M$.
Its simplicial version can be illustrated as follows in the case of
$\Delta=\Delta^{j_{0}}\ast\Delta^{j_{1}}$: we cut off a small open neighborhood of $\Delta^{j_{0}}$ in $\Delta$ to get 
$\widetilde{\Delta}=c\Delta^{j_{0}}\times \Delta^{j_{1}}$. For instance, 

\definecolor{zzttqq}{rgb}{0.6,0.2,0.9}

\makebox{}

\medskip

\begin{tikzpicture}
\draw [color=black] (0,0)-- (1,0.5);
\draw [color=black] (1,0.5)--  (0,2);
\draw [color=black]  (0,2)-- (0,0);
\draw [color=black]  (1,0.5)-- (-2,0.5);
\draw [color=black]  (0,0)-- (-2,0.5);
\draw [color=black]  (0,2)-- (-2,0.5);
\fill [color=zzttqq] (-2,0.5) circle (3pt);
\draw[color=black] (-2.5,0.5) node {$\Delta^{j_0}$};
\draw[color=black] (-0.5,-1.2) node {$\Delta = \Delta^{j_0} *\Delta^{j_1}$  };
\draw[color=black] (2.5,-1.3) node {has for blow-up };
\draw[color=black] (5.5,-1.2) node {$\widetilde\Delta = c \Delta^{j_0} \times  \Delta^{j_1}$. };
\draw[color=black] (1.2,1.5) node {$\Delta^{j_1}$};
\draw[color=zzttqq] (2.5,2.5) node {$\left(\Delta^{j_0} \times \{1\} \right)\times \Delta^{j_1} $};
\draw[->] (0.8,1.3) -- (0.3,0.8);
\draw[color=zzttqq, ->] (2.5,2.2) -- (4.3,1.3);
\draw [color=black] (6,0)-- (7,0.5);
\draw [color=black] (7,0.5)--  (6,2);
\draw [color=black]  (6,2)-- (6,0);
\fill[color=red,fill=zzttqq,fill opacity=0.3] (4,0) -- (5,0.5) -- (4,2) -- (4,0) -- cycle;
\draw [color=zzttqq] (4,0)-- (5,0.5);
\draw [color=black] (5,0.5)--  (7,0.5);
\draw [color=black] (6,2)--  (4,2);
\draw [color=zzttqq]  (4,2)-- (4,0);
\draw [color=black]  (6,0)-- (4,0);
\draw [color=zzttqq]  (5,0.5)-- (4,2);
\draw[color=black] (7.2,1.5) node {$\Delta^{j_1}$};
\draw[->] (6.8,1.3) -- (6.3,0.8);
\draw[color=black] (5,-0.3) node {$c\Delta^{j_0}$};
\end{tikzpicture}


In the general case of $\Delta=\Delta^{j_{0}}\ast\cdots\ast\Delta^{j_{n}}$, we use an inductive process which consists in cutting off a small open neighborhood of the smallest stratum. As an illustration, 

\vspace{1cm}

\begin{tikzpicture}
\draw [color=black] (0,0)-- (1,0.5);
\draw [color=black] (1,0.5)--  (0,2);
\draw [color=black]  (0,2)-- (0,0);
\draw [color=black]  (1,0.5)-- (-2,0.5);
\draw [color=red,very thick,dashed]  (0,0)-- (-2,0.5);
\draw [color=black]  (0,2)-- (-2,0.5);
\fill [color=zzttqq] (-2,0.5) circle (3pt);
\draw[color=black] (-2.5,0.5) node {$\Delta^{j_0}$};
\draw[color=black] (0,-0.2) node {$\Delta^{j_1}$};
\draw[color=black] (-0.5,-2) node {$\Delta = \Delta^{j_0} *\Delta^{j_1} *\Delta^{j_2}$  };
\draw[color=black] (3,-2.1) node {has for blow-up };
\draw[color=black] (6.5,-2) node {$\widetilde\Delta = c \Delta^{j_0} \times c\Delta^{j_1} \times \Delta^{j_2}$. };
\draw[color=black] (1,1.5) node {$\Delta^{j_2}$};
\draw[color=zzttqq] (2.5,2.5) node {$\left(\Delta^{j_0} \times \{1\} \right)\times c\Delta^{j_1} \times \Delta^{j_2}$};
\draw[color=zzttqq, ->] (2.5,2.2) -- (3.8,1.6);
\draw[color=red] (2.5,-1) node {$ c\Delta^{j_0}  \times \left(\Delta^{j_1} \times \{1\} \right)\times \Delta^{j_2}$};
\draw[color=red, ->] (3,-0.5) -- (5,0.3);
\draw [color=black] (6,0)-- (7,0.5);
\draw [color=black] (7,0.5)--  (6,2);
\draw [color=black]  (5,1.5)-- (6,2);
\fill[color=black,fill=zzttqq,fill opacity=0.2,dashed] (4,0) -- (5,0.5) -- (4,2) -- (3,1.5) -- (4,0) -- cycle;
\fill[color=black,fill=red,fill opacity=0.3] (4,0) -- (6,0)-- (5,1.5)  -- (3,1.5) -- (4,0) -- cycle;
\draw [color=zzttqq] (4,0)-- (5,0.5);
\draw [color=black] (5,0.5)--  (7,0.5);
\draw [color=black] (6,2)--  (4,2);
\draw [color=red,very thick]  (4,0)-- (3,1.5);
\draw [color=red,very thick,dashed]  (6,0)-- (4,0);
\draw [color=red,very thick,dashed]  (3,1.5)-- (5,1.5);
\draw [color=zzttqq]  (3,1.5)-- (4,2);
\draw [color=red,very thick,dashed] (6,0 )--  (5,1.5);
\draw [color=zzttqq]  (5,0.5)-- (4,2);
\draw[color=black] (7.2,1.5) node {$\Delta^{j_2}$};
\draw[color=black] (5,-0.3) node {$c\Delta^{j_0}$};
\draw[color=black] (7,0) node {$c\Delta^{j_1}$};
\end{tikzpicture}

The faces containing $\Delta^{j_{i}}\times\{1\}$ as a factor, which play a fundamental role in the definition of the perverse degree (see \defref{def:tranversedegree}), have been shadowed in the previous drawings.

The motivation for such process occurs when one determines the intersection cohomology of a pseudomanifold with differential forms : as these forms cannot be defined on the singular strata, the only possibility is to  define them on the regular part and ask for some control in the neighborhood of strata. That is what we do here for cochains.
As observed in \cite{MR3046315}, by G.~Friedman and J.E.~McClure, the classical way for the definition of a cup-product (with back and front faces) does not fit with perverse degrees. But, one advantage of the blow-up is that we can define the cup-product (and more generally the ${\rm cup}_{i}$-products) stratum after stratum, on each factor of the product 
$c\Delta^{j_{0}}\times \cdots\times c\Delta^{j_{n-1}}\times \Delta^{j_{n}}$,
 from the classical definition and in a compatible way with the perverse structure. 
Finally,  this procedure reveals itself of an easy use and does not lose any information in cohomology; it gives the same structure on cohomology as Goresky's definition, as it is established in \secref{sec:goresky}.

\tableofcontents
 In \secref{sec:perversity}, we recall  basic notions concerning \ffss~and their intersection cohomology. \secref{sec:E2algebras} is devoted to the construction
of a structure of perverse $\cE(2)$-algebra on the blow-up, $\tN^*(X)$, which corresponds to the building of ${\rm cup}_{i}$-products. In \secref{sec:steenrod}, we establish the main properties of perverse Steenrod squares, including the proof of the perverse range conjecture of M.~Goresky and W.~Pardon. The comparison between our definition and Goresky's definition of Steenrod squares, in the case of a pseudomanifold, is done in \secref{sec:goresky}. The particular case of isolated singularities and the treatment of Steenrod squares in the intersection cohomology of a Thom space are presented in \secref{sec:isolated}. 
An example  of a square, $\sq^2$, in the intersection cohomology of the total space of a fibration whose fiber is a cone is given in 
\secref{sec:the example}. This example shows the interest of having a range of perversity in $\cL(\ov{p},i)$ instead of $2\ov{p}$.
Finally, \secref{sec:topinvariance} is devoted to the topological invariance of our Steenrod squares.

All the cohomology groups appearing in this text are over the field with two elements, $\F_{2}$. If 
there is no ambiguity, we simplify the notation $H^*(X;\F_{2})$ in $H^*(X)$.

We thank the anonymous referee for her/his comments and suggestions which have contributed to improve the organization and the writing.

\section{Blow-up and perversity}\label{sec:perversity}

In this section, we recall the basics of a simplicial version of intersection cohomology, already introduced in \cite{2012arXiv1205.7057C}.

Let $\Delta^k$ be the standard simplex of $\R^{k+1}$, whose vertices, $v_{0},\ldots,v_{k}$, verify $v_{i}=(t_{0},\ldots,t_{k})$, with $t_{j}=0$ if $j\neq i$ and $t_{i}=1$. Let $\delta_{i}\colon\{0,1,\ldots,k-1\}\to \{0,1,\ldots,k\}$
defined by
$$\delta_{i}(j)=\left\{
\begin{array}{lcl}
j&\text{ if }&j<i,\\
j+1&\text{ if }&j\geq i.
\end{array}\right.$$
Such maps generate linear applications, still denoted
$\delta_{i}\colon \Delta^{k-1}\to\Delta^k$ and defined by
$\delta_{i}(v_{j})=v_{\delta_{i}(j)}$. More generally, any map
$\sigma\colon \{0,1,\ldots,\ell\}\to \{0,1,\ldots,k\}$
generates a linear application 
$\sigma\colon \Delta^{\ell}\to \Delta^k$.

\medskip
 \emph{We fix an integer $n$} 
 and consider the category  ${\pmb\Delta}^{[n]}_\cF$ whose
\begin{itemize}
\item objects are the joins, $\Delta=\Delta^{j_0}\ast\Delta^{j_1}\ast\cdots\ast\Delta^{j_n}$, where $\Delta^{j_i}$  is the simplex of dimension $j_i$, possibly empty, with the conventions $\Delta^{-1}=\emptyset$ and  $\emptyset\ast X=X$,
\item maps are the $\sigma\colon \Delta=\Delta^{j_0}\ast\Delta^{j_1}\ast\cdots\ast\Delta^{j_n}\to\Delta'=\Delta^{k_0}\ast\Delta^{k_1}\ast\cdots\ast\Delta^{k_n}$, 
of the shape $\sigma=\ast_{i=0}^n\sigma_{i}$, with 
$\sigma_{i}\colon \{0,1,\ldots,j_{i}\}\to \{0,1,\ldots,k_{i}\}$
 an injective order-preserving map for each~$i$.
\end{itemize}

The category
${\pmb\Delta}^{[n],+}_\cF$ is the full subcategory of ${\pmb\Delta}^{[n]}_\cF$ whose objects are the joins $\Delta^{j_0}\ast\Delta^{j_1}\ast\cdots\ast\Delta^{j_n}$ with
$\Delta^{j_n}\neq\emptyset$, i.e., $j_n\geq 0$.
To any such element, we associate its \emph{blow-up} which is the map$$\mu\colon \widetilde{\Delta}=c\Delta^{j_0}\times\cdots\times c\Delta^{j_{n-1}}\times \Delta^{j_n}\to
\Delta=\Delta^{j_0}\ast\cdots\ast\Delta^{j_n},$$
defined by
\begin{eqnarray*}
\mu([y_0,s_0],\ldots,[y_{n-1},s_{n-1}],y_n)&=&
s_0y_0+(1-s_0)s_1y_1+\cdots\\
&&
+(1-s_0)\cdots (1-s_{n-2})s_{n-1}y_{n-1}\\&&
+(1-s_0)\cdots (1-s_{n-2})(1-s_{n-1})y_n,
\end{eqnarray*}
where $y_i\in\Delta^{j_i}$ and $[y_i,s_i]\in c\Delta^{j_i}=(\Delta^{j_{i}}\times [0,1])/(\Delta^{j_{i}}\times \{0\})$. 
The prism $\widetilde{\Delta}$ is sometimes also called the blow-up of~$\Delta$.

Observe that this blow-up is well defined thanks to the restriction to the subcategory ${\pmb\Delta}^{[n],+}_\cF$. 
In the topological setting (see \remref{rem:ffspseudomanifold}) 
this restriction means that we do not consider simplices entirely included in the singular part.

\begin{definition}\label{def:filteredfaceset}
A \emph{filtered face set,} of formal dimension $n$, is a contravariant functor, $\ob{K}$, from the category ${\pmb\Delta}^{[n]}_\cF$ to the category of sets, i.e., $(j_0,\ldots,j_n)\mapsto \ob{K}_{(j_0,\ldots,j_n)}$.
The restriction of the filtered face set, $\ob{K}$, to ${\pmb\Delta}^{[n],+}_\cF$ is denoted $\ob{K}_+$. 

If $\ob{K}$ and $\ob{K}'$ are filtered face sets, a \emph{filtered face map,} $\ob{f}\colon\ob{K}\to\ob{K}'$, is a natural transformation between the two functors $\ob{K}$ and $\ob{K}'$.
We denote by 
$\fil$
the category of filtered face sets.
\end{definition}

To any simplicial set, $Y$, we can associate the $\F_{2}$-vector space $C_{d}(Y)$ generated by the $d$-dimensional
simplices of $Y$. The normalized chain complex, $N_{d}(Y)$, is the quotient of $C_{d}(Y)$ by the degeneracies $\mathfrak s_{i}$, 
$$N_{d}(Y)=C_{d}(Y)/{\mathfrak s}_{0}C_{d-1}(Y)+\cdots+{\mathfrak s}_{d-1}C_{d-1}(Y).$$
We consider also the duals
$N^*(Y)=\hom_{\F_{2}}(N_{*}(Y),\F_{2})$ and
$C^*(Y)=\hom_{\F_{2}}(C_{*}(Y),\F_{2})$.

\smallskip
Any face operator,
$\delta_{i}\colon\Delta^{j_{\ell}}\to \Delta^{j_{\ell}+1}$,
for some $\ell\in\{0,\ldots,n-1\}$,
induces a chain map,
$\delta_{i}^*\colon 
N^*(c\Delta^{j_0})\otimes\cdots\otimes N^*(c\Delta^{j_{\ell}+1})\otimes\cdots\otimes N^*(\Delta^{j_n})
\to
N^*(c\Delta^{j_0})\otimes\cdots\otimes N^*(c\Delta^{j_{\ell}})\otimes\cdots\otimes N^*(\Delta^{j_n})$, defined by the identity on the factors in $\Delta^{j_{i}}$ for $i\neq\ell$. 

We denote also by 
$\delta_{i}\colon 
\Delta^{j_{0}}\ast\cdots\ast\Delta^{j_{\ell}}\ast\cdots\ast \Delta^{j_{n}}
\to
\Delta^{j_{0}}\ast\cdots\ast\Delta^{j_{\ell}+1}\ast\cdots\ast \Delta^{j_{n}}$
the operator defined by $\delta_{i}\colon\Delta^{j_{\ell}}\to \Delta^{j_{\ell}+1}$ and the identity maps.
For any simplex,\linebreak
 $\sigma\colon \Delta^{j_{0}}\ast\cdots\ast\Delta^{j_{\ell}+1}\ast\cdots\ast\Delta^{j_{n}}\to \ob{K}_{+}$, we define a  simplex,
$\partial_{i}\sigma\colon \Delta^{j_{0}}\ast\cdots\ast\Delta^{j_{\ell}}\ast\cdots\ast\Delta^{j_{n}}\to \ob{K}$,
by $\partial_{i}\sigma=\sigma\circ\delta_{i}$, and a complex,
$$\tN^*_\sigma=N^*(c\Delta^{j_0})\otimes\cdots\otimes N^*(c\Delta^{j_{n-1}})\otimes N^*(\Delta^{j_n}).$$
These previous considerations on face operators can easily be adapted to the case $\ell=n$.

\smallskip
A \emph{global section (or {cochain})} on $\ob{K}$ is a function which assigns to each simplex $\sigma\in\ob{K}_{+}$ an element $c_\sigma\in \tN^*_\sigma$ such that
$c_{\partial_i\sigma}=\delta_{i}^*(c_\sigma)$ for all $\sigma\in\ob{K}_{+}$  and all $\delta_{i}\in{\pmb\Delta}^{[n],+}_\cF$.
(The restriction to
${\pmb\Delta}^{[n],+}_\cF$
implies $\Delta^{j_{n}}\neq \emptyset$.)

The space of global sections is denoted by $\tN^*(\ob{K})$  and  called  \emph{the blow-up}  of $N^*$ over the \ffs~$\ob{K}$. Global sections have an extra degree, called the \emph{perverse degree}, that we describe now. 

\medskip
Let $\sigma\colon\Delta^{j_0}\ast\cdots\ast \Delta^{j_n}\to\ob{K}_{+}$ and
$\ell\in \{1,\ldots,n\}$ such that  $\Delta^{j_{n-\ell}}\neq\emptyset$. 
For any cochain $c_{\sigma}\in 
N^*(c\Delta^{j_0})\otimes\cdots\otimes N^*(c\Delta^{j_{n-1}})\otimes N^*(\Delta^{j_n})$,
its restriction
\begin{equation}\label{equa:blowup}
c_{\sigma,n-\ell}\in 
N^*(c\Delta^{j_0})\otimes\cdots\otimes N^*(\Delta^{j_{n-\ell}}\times\{1\})\otimes\cdots\otimes N^*(c\Delta^{j_{n-1}})\otimes N^*(\Delta^{j_n})
\end{equation}
can be written
$c_{\sigma,n-\ell}=\sum_k c'_{\sigma,n-\ell}(k)\otimes c''_{\sigma,n-\ell}(k)
 $, with 
 \begin{itemize}
\item $c'_{\sigma,n-\ell}(k)\in N^*(c\Delta^{j_0})\otimes\cdots\otimes N^*(c\Delta^{j_{n-\ell-1}})\otimes
N^*(\Delta^{j_{n-\ell}}\times\{1\})$ and
\item $c''_{\sigma,n-\ell}(k)\in  N^*(c\Delta^{j_{n-\ell+1}})\otimes\cdots\otimes N^*(\Delta^{j_n})$.
\end{itemize}
Observe that each term of the tensor product in Formula~(\ref{equa:blowup}) has a finite canonical basis and the decomposition of $c_{\sigma,n-\ell}$ can be canonically chosen in function of the associated basis of the tensor product.

\begin{definition}\label{def:tranversedegree}
If $c_{\sigma,n-\ell}\neq 0$, the \emph{$\ell$-perverse degree,} $\|c_{\sigma}\|_{\ell}$, of  
$c_{\sigma}$ is equal to 
$$\|c_\sigma\|_{\ell}=\sup_k\left\{ |c''_{\sigma,n-\ell}(k)|\text{ such that } c'_{\sigma,n-\ell}(k)\neq 0\right\},$$
where $|c''_{\sigma,n-\ell}(k)|$ denotes the usual degree of the cochain $c''_{\sigma,n-\ell}(k)$.

If $c_{\sigma,n-\ell}= 0$ or $\Delta^{j_{n-\ell}}=\emptyset$, we set
$\|c_{\sigma}\|_{\ell}=-\infty$.

The
  \emph{perverse degree}
 of a  global section $c\in \tN^*({\ob{K}})$  
is the $n$-tuple
$$\|c\|=(\|c\|_1,\ldots,\|c\|_{n}),$$  
where $\|c\|_\ell$ is the supremum of the $\|c_{{\sigma}}\|_\ell$ for all ${\sigma}\in {\ob{K}_{+}}$.
\end{definition}

Intersection cohomology requires a notion of perversity that we introduce now, following the convention of \cite{MR800845}.

\begin{definition}
A \emph{loose perversity} is a map $\ov{p}\colon \N\to\Z$, $i\mapsto \ov{p}(i)$, such that $\ov{p}(0)=0$. 
A \emph{perversity} is a loose perversity such that
$\ov{p}(i)\leq\ov{p}(i+1)\leq\ov{p}(i)+1$, for all $i\in \N$.
A \emph{Goresky-MacPherson perversity} (or \emph{GM-perversity}) is a perversity such that $\ov{p}(1)=\ov{p}(2)=0$.
\end{definition}

If $\ov{p}_1$ and $\ov{p}_2$ are two loose perversities, we set $\ov{p}_1\leq \ov{p}_2$ if we have $\ov{p}_1(i)\leq\ov{p}_2(i)$, for all $i\in\N$.  The poset of all loose perversities is denoted $\cPloose$. 

The lattice of GM-perversities, denoted $\cP^n$, admits a maximal element, $\ov{t}$, called the \emph{top perversity} and defined by $\ov{t}(i)=i-2$, if $i\geq 2$, $\ov{t}(0)=\ov{t}(1)=0$.

To these posets, we  add an element, $\ov{\infty}$, which is the constant map to $\infty$. We call it the \emph{infinite perversity} despite the fact that it is not a perversity in the sense of the previous definition. Finally, we set
$\hat{\cP}^n=\cP^n\cup\{\ov{\infty}\}$
and
$\hat{\cP}^n_{{\rm loose}}=\cPloose\cup\{\ov{\infty}\}$.

\begin{definition}\label{def:localsystemadmissible}
Let $\ov{p}$ be a  loose perversity.  A global section $c\in \tN^*({\ob{K}})$  is \emph{$\ov{p}$-admissible} if 
$\|c\|_{i}\leq \ov{p}(i)$, for any $i\in\{1,\ldots,n\}$. A  global section $c$ is of \emph{$\ov{p}$-intersection} if $c$ and its differential, $\delta c$, are $\ov{p}$-admissible. 

We denote by $\tN^*_{\ov{p}}({\ob{K}})$ the complex 
 of  global sections of $\ov{p}$-intersection and by 
 $H_{\ov{p}}^*({\ob{K}};\tN)$  its homology.
\end{definition} 

By using the same process with $C^*$ in place of $N^*$, we obtain a second complex of global sections of $\ov{p}$-intersection,
$\tC^*_{\ov{p}}({\ob{K}})$, of homology  $H_{\ov{p}}^*({\ob{K}};\tC)$. Directly from 
\cite[Theorem A]{2012arXiv1205.7057C},
we get an isomorphism between these two cohomologies.

\begin{proposition}\label{prop:CandN}
Let $\ob{K}$ be a \ffs~and $\ov{p}$ be a loose perversity. The canonical surjection, $C_{*}(-)\to N_{*}(-)$, induces a quasi-isomorphism,
$\tN^*_{\ov{p}}(\ob{K})\to \tC^*_{\ov{p}}(\ob{K})$,
 and therefore an isomorphism
$H_{\ov{p}}^*({\ob{K}};\tN)\cong H_{\ov{p}}^*({\ob{K}};\tC)$.
\end{proposition}

If there is no ambiguity, we denote by $H_{\TW,\ov{p}}^*({\ob{K}})$ this common value and called it the \emph{Thom-Whitney cohomology} (henceforth TW-cohomology) of ${\ob{K}}$ with coefficients in $\F_{2}$ for the loose perversity $\ov{p}$.

The topological objects corresponding to the \ffss~are locally conical, stratified topological spaces. We consider here
only the case of pseudomanifolds defined as follows.

\begin{definition}\label{def:pseudo}
An \emph{$n$-dimensional topological pseudomanifold} is a nonempty topological space with a filtration by closed subsets,
$$\emptyset=X_{-1}\subseteq X_{0}\subseteq \cdots\subseteq X_{n-2}=X_{n-1}\subsetneqq X_{n}=X,$$
such that, for all $i$, 
$X_i\backslash X_{i-1}$ is an $i$-dimensional metrizable topological  manifold or the empty set. Moreover, for each point  $x \in X_i \backslash X_{i-1}$, $i\neq n$, there exist,
\begin{enumerate}[(a)]
\item an open neighborhood, $V$, of $x$ in $X$, endowed with the induced filtration,
\item an open neighborhood, $U$, of $x$ in  $X_i\backslash X_{i-1}$, 
\item a compact topological pseudomanifold, $L=(L_{j})_{0\leq j\leq n-i-1}$,  of dimension $n-i-1$, whose cone, $\mathring{c}L=(L\times [0,1[)/(L\times\{0\})$, is endowed with the conic filtration, i.e., $(\mathring{c}L)_{i}=\mathring{c}L_{i-1}$, for $i\geq 0$,%
\item a   homeomorphism, $\varphi \colon U \times \mathring{c}L\to V$, 
such that
\begin{enumerate}[(1)]
\item $\varphi(u,\tv)=u$, for any $u\in U$, with $\tv$  the cone point,
\item $\varphi(U\times \mathring{c}L_j)=V\cap X_{i+j+1}$, for any $j\in \{0,\ldots,n-i-1\}$.
\end{enumerate}
\end{enumerate}
The  couple $(V,\varphi)$ is called a \emph{conic chart} of $x$
 and the filtered  space, $L$, the \emph{link} of $x$. 
\end{definition}

This definition makes sense with an induction on the dimension, starting from pseudomanifolds of dimension 0 which are discrete topological spaces, by definition. 
Also, one can prove that the subspace
$X_{n}\backslash X_{n-2}$ is dense.

\begin{remark}\label{rem:ffspseudomanifold}
The set of filtered singular simplices is the  bridge between pseudomanifolds and the more general notion of \ffss. More precisely, for any pseudomanifold, $X$, we  define (see \cite[Example 1.5]{2012arXiv1205.7057C}) the singular \ffs~by
$$\ob{\rm ISing}^{\cF}(X)_{j_0,\ldots,j_n}=\{\sigma\colon \Delta^{j_0}\ast\cdots\ast \Delta^{j_n}\to X \mid \sigma^{-1}X_{i}= \Delta^{j_0}\ast\cdots\ast \Delta^{j_i}\}.$$
Such simplex is called \emph{filtered.}
\end{remark}

 If $X$ is a pseudomanifold and $\ob{K}=\ob{\rm ISing}^{\cF}(X)$, we use  the notations $\tN^*_{\ov{p}}(X)$, $\tC^*_{\ov{p}}(X)$ and $H^*_{\TW,\ov{p}}(X)$ for the Thom-Whitney complexes and their cohomology. (As $X_{n-1}=X_{n-2}$, the case $i=1$ in \defref{def:localsystemadmissible}, is vacuous in this setting.) 
 
 \medskip
 We end this section with a reminder of Goresky-MacPherson cohomology (with coefficients in $\F_{2}$) and its link
 with the blow-up. Let $\ov{p}$ be a loose perversity. A filtered simplex,
 $\sigma\colon\Delta=\Delta^{j_{0}}\ast\cdots\ast \Delta^{j_{n}}\to X$,
 has a perverse degree, $\|\sigma\|=(\|\sigma\|_{0},\ldots,\|\sigma\|_{n})$,
 where $\|\sigma\|_{\ell}=\dim (\Delta^{j_{0}}\ast\cdots\ast\Delta^{j_{n-\ell}})$,
 with the convention $\|\sigma\|_{\ell}=-\infty$ if $\sigma^{-1} X_{n-\ell}=\emptyset$.

 A $\ov{p}$-admissible simplex of $X$ is a filtered simplex,
 $\sigma\colon\Delta=\Delta^{j_{0}}\ast\cdots\ast \Delta^{j_{n}}\to X$,
 such that
 $\|\sigma\|_{\ell}\leq \dim\Delta-\ell+\ov{p}(\ell)$,
 for any $\ell\in\{1,\ldots,n\}$. 
 A $\ov{p}$-admissible chain is a linear combination of $\ov{p}$-admissible simplices. A chain, $c$, is of $\ov{p}$-intersection if $c$ and its boundary, $\partial c$, are $\ov{p}$-admissible. Denote by
 $C_{*}^{\GM,\ov{p}}(X)$,   $N_{*}^{\GM,\ov{p}}(X)$ the complexes of $\ov{p}$-intersection chains, by
 $C^*_{\GM,\ov{p}}(X)=\hom(C_{*}^{\GM,\ov{p}}(X),\F_{2})$,
 $N^*_{\GM,\ov{p}}(X)=\hom(N_{*}^{\GM,\ov{p}}(X),\F_{2})$
 their dual and by
 $H^*_{\GM,\ov{p}}(X)=H(C^*_{\GM,\ov{p}}(X))=H(N^*_{\GM,\ov{p}}(X))$
 their homology, called
the Goresky-MacPherson (\cite{MR572580}) intersection cohomology of $X$ (henceforth GM-cohomology)
 with coefficients in $\F_{2}$. 
This cohomology is isomorphic to the original Goresky-MacPherson cohomology in the case of a pseudomanifold, $X$, 
and a GM-perversity, $\ov{p}$, see \cite[Proposition A.29]{2012arXiv1205.7057C} and \cite{MR800845}.

\smallskip
The GM and TW cohomologies are related in \cite[Theorem B]{2012arXiv1205.7057C} that we recall here.

\begin{proposition}\label{prop:GMetTW}
Let $X$ be a pseudomanifold, $\ov{p}$ and $\ov{q}$ be two perversities such that $\ov{q}\geq 0$ and $\ov{p}(i)+\ov{q}(i)=i-2$ for any $i\in\{2,\ldots,n\}$. Then there is an isomorphism between the GM and the TW cohomologies,
$H^*_{\TW,\ov{q}}(X)\cong H^*_{\GM,\ov{p}}(X)$.
 \end{proposition} 

\section{Perverse $\cE(2)$-algebras and \ffss}\label{sec:E2algebras}

Steenrod  squares are built from an action of a normalized homogeneous Bar resolution, $\cE(2)$, of the symmetric group $\Sigma_{2}$, on the normalized singular cochains. 
This is the way the non-commutativity of the cup-product is controlled up to higher coherent homotopies.
This action enriches the multiplicative structure given by the cup-product.
We first review it in order to adapt this construction to the perverse setting.

 Recall that the resolution $\cE(2)$ of $\F_{2}$ as $\Sigma_{2}$-module is defined by
$$\ldots\rightarrow \cE(2)_{i}\stackrel{d}{\rightarrow} \cE(2)_{i-1}\to\cdots$$
with $\cE(2)_{i}=\F_{2}(e_{i},\tau_{i})$, $d e_{i}=d\tau_{i}=e_{i-1}+\tau_{i-1}$.
(As we are using cochain complexes, $\cE(2)$ is negatively graded.)
From the isomorphism
$\Sigma_{2}\cong \{e_{i},\,\tau_{i}\}$ with $\tau_{i}$ the generator of $\Sigma_{2}$,
the action (on the left) of $\Sigma_{2}$ defines a natural action on $\cE(2)$.
This action is extended to the tensor product $\cE(2)\otimes\cE(2)$ as a diagonal action.
 Moreover, the
complex $\cE(2)$ is equipped with a $\Sigma_{2}$-equivariant diagonal,
$\cD\colon \cE(2)\to \cE(2)\otimes \cE(2)$, defined by
$$\cD(e_{i})=\sum_{j=0}^i e_{j}\otimes \tau^j.e_{i-j},$$
with $\tau.e_{k}=\tau_{k}$, $\tau.\tau_{k}=e_{k}$.
 This diagonal is essential for the definition of the structure of $\cE(2)$-algebra on $\tN^*(\ob{K})$.
 Finally, observe that, for any vector space $V$, there is a $\Sigma_{2}$-action on $\hom_{\F_{2}}(V^{\otimes 2},V)$, defined by $(\tau.f)(v_{1}\otimes v_{2})=f(v_{2}\otimes v_{1})$.
 
\begin{definition}\label{def:E2algebra}
An \emph{$\cE(2)$-algebra structure} on a cochain complex, $A^*$, is a cochain map,
$\psi \colon \cE(2)\otimes A^{\otimes 2}\to A$, which is $\Sigma_{2}$-equivariant as map from $\cE(2)$ to $\hom_{\F_{2}}(A^{\otimes 2},A)$.
\end{definition}
If we denote $\psi(e_{i}\otimes x_{1}\otimes x_{2})$ by $ x_{1}\cup_{i}x_{2}$, the previous definition is equivalent to
\begin{enumerate}
\item $\psi(\tau_{i}\otimes x_{1}\otimes x_{2})=\psi(e_{i}\otimes x_{2}\otimes x_{1})=x_{2}\cup_{i}x_{1}$,
\item together with the Leibniz condition:\\
$\delta(x_{1}\cup_{i} x_{2})= x_{1}\cup_{i-1}x_{2}+x_{2}\cup_{{i-1}}x_{1}+ \delta x_{1}\cup_{i}x_{2}+x_{1}\cup_{i}\delta x_{2}$.
\end{enumerate}
This means that \emph{an $\cE(2)$-algebra structure is given by a cochain map, called $\text{cup}_{i}$-product, $\cup_{i} \colon A^r\otimes A^s\to A^{r+s-i}$, satisfying the previous Leibniz condition.}

Let $L$ be a simplicial set. 
In \cite{MR2075046}, C. Berger et B. Fresse prove the existence of an $\cE(2)$-action on the normalized 
cochain complex of $L$, i.e., the existence of a cochain map
$$\psi_{L}\colon \cE(2)\otimes N^*(L)^{\otimes^2}\to N^*(L),\;e_{i}\otimes x_{1}\otimes x_{2}\mapsto x_{1}\cup_{i}x_{2},$$
which satisfies the requirements of \defref{def:E2algebra}. 
As it is established by May  in \cite{MR0281196},  classical properties of ${\rm cup}_{i}$-products are 
a direct consequence of this $\cE(2)$-algebra structure, except two of them that we quote in the next definition.
(Mention that $N^*(L)$ satisfies these two additional properties, see \cite{MR2075046}.)

\begin{definition}\label{def:niceE2algebra}
An {$\cE(2)$-algebra,} $A^*$, is \emph{nice} if it verifies the two next properties, for all 
$x,\,x'\in A$ of respective degrees $|x|$ and $|x'|$,
\begin{enumerate}[(i)]
\item $x\cup_{|x|}x=x$,
\item  $x\cup_{i}x'=0$ if $i>\min(|x|,|x'|)$.
\end{enumerate}
\end{definition}

\noindent
Observe  the useful next property of nice $\cE(2)$-algebras.

\begin{lemma}\label{lem:petitlemmenice}
Let $A$ be a nice $\cE(2)$-algebra. If $a\in A^d$ and $b\in A^d$, we have
$$a\cup_{d}b=b\cup_{d}a.$$
\end{lemma}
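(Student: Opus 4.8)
The plan is to deduce $a\cup_d b = b\cup_d a$ directly from the two defining properties of a nice $\cE(2)$-algebra (Definition~\ref{def:niceE2algebra}), combined with the Leibniz relation that governs the $\cup_i$-products. The key observation is that when $a,b$ both live in degree $d$, the product $\cup_d$ sits right at the boundary of the vanishing range in property~(ii): we have $a\cup_i b=0$ precisely for $i>\min(|a|,|b|)=d$, so the first genuinely nontrivial index is $i=d$, and there is no room above it. This rigidity at the top index is what should force symmetry.

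First I would write down the Leibniz condition in the form given just after Definition~\ref{def:E2algebra},
\[
\delta(x_1\cup_i x_2)=x_1\cup_{i-1}x_2+x_2\cup_{i-1}x_1+\delta x_1\cup_i x_2+x_1\cup_i\delta x_2,
\]
and specialize it to $x_1=a$, $x_2=b$ with $i=d+1$. The crucial point is that the left-hand side and the two terms $\delta a\cup_{d+1}b$ and $a\cup_{d+1}\delta b$ all involve the index $i=d+1>d=\min(|a|,|b|)$, so by property~(ii) of a nice $\cE(2)$-algebra they all vanish. Indeed $a\cup_{d+1}b=0$ makes the left side zero, and the degree-counting (noting $\cup_{d+1}\colon A^r\otimes A^s\to A^{r+s-d-1}$) together with property~(ii) kills the two terms containing differentials. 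What survives is exactly
\[
0=a\cup_{d}b+b\cup_{d}a,
\]
and since we are working over $\F_2$ this is precisely $a\cup_d b=b\cup_d a$.

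The main technical point to verify carefully is the claim that the remaining terms vanish. For $\delta a\cup_{d+1}b$: here $\delta a\in A^{d+1}$ and $b\in A^d$, so $\min(|\delta a|,|b|)=d<d+1$ and property~(ii) applies; symmetrically for $a\cup_{d+1}\delta b$. For the left-hand side $\delta(a\cup_{d+1}b)$, the inner term $a\cup_{d+1}b$ already vanishes by property~(ii) since $d+1>d$, so its differential is zero. Thus the only obstacle is really just bookkeeping: confirming the index and degree inequalities line up so that property~(ii) is applicable to every term except the two we want to relate. I expect no genuine difficulty here; the argument is a clean one-line application of Leibniz at index $d+1$ followed by the characteristic-$2$ cancellation. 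Note that property~(i) is not even needed for this lemma—only the vanishing property~(ii) and the Leibniz rule enter.
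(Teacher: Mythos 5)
Your proof is correct and is essentially identical to the paper's own argument: both apply the Leibniz rule at index $d+1$, use property~(ii) of a nice $\cE(2)$-algebra to kill $\delta(a\cup_{d+1}b)$, $\delta a\cup_{d+1}b$, and $a\cup_{d+1}\delta b$, and conclude $a\cup_d b + b\cup_d a = 0$ over $\F_2$. Your extra bookkeeping on degrees only makes explicit what the paper leaves implicit.
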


\begin{proof}
Property (ii) of \defref{def:niceE2algebra} and Leibniz rule imply
\begin{eqnarray*}
\delta(a\cup_{d+1}b)
&=& 0\\
&=& a\cup_{d}b+b\cup_{d}a+\delta a\cup_{d+1}b+a\cup_{d+1}\delta b\\
&=& a\cup_{d}b+b\cup_{d}a.
\end{eqnarray*}
\end{proof}

We recall now from \cite{MR2075046} the construction of \emph{the tensor product of 
$\cE(2)$-algebras.}
Let $\psi_{i}\colon \cE(2)\otimes A_{i}^{\otimes^2}\to A_{i}$
be $\cE(2)$-algebras for $i=0,1$. 
We use the diagonal $\cD$ of $\cE(2)$ for the construction of an $\cE(2)$-action on the tensor product 
$A_{0}\otimes A_{1}$, as the following composite, denoted by $\Phi$,
$$\xymatrix{
\cE(2)\otimes (A_{0}\otimes A_{1})^{\otimes^2}\ar[r]^{Sh}&
 \cE(2)\otimes A_{0}^{\otimes^2}\otimes A_{1}^{\otimes^2}
\ar[rr]^-{\cD\otimes\id\otimes\id}&&
\cE(2)\otimes \cE(2)\otimes A_{0}^{\otimes^2}\otimes A_{1}^{\otimes^2}
\ar[d]^{Sh}\\
&A_{0}\otimes A_{1}
&& \cE(2)\otimes A_{0}^{\otimes^2}\otimes \cE(2)\otimes A_{1}^{\otimes^2},
\ar[ll]_-{\psi_{{0}}\otimes \psi_{{1}}}
}$$
where $Sh$ are appropriate shuffle maps. 
We have to verify that the map $\Phi$ satisfies the two conditions stated after \defref{def:E2algebra}. 
Assertion (2) is the compatibility with the differentials which is direct here, 
because $\Phi$ is the composite of maps that are compatible with the differentials.
Thus, we are reduced to Assertion (1). Recall from the definition of the diagonal of $\cE(2)$,
$$\cD(e_{i})=
\sum_{j=0}^ie_{j}\otimes \tau^j. e_{i-j} \text{ and }
\cD(\tau_{i})=
\sum_{j=0}^i\tau. e_{j}\otimes \tau^{j+1}. e_{i-j}=
\sum_{j=0}^i \tau_{j}\otimes \tau^j. \tau_{i-j}.$$
A computation from the definition of $\Phi$ gives,
\begin{eqnarray*}
\Phi(\tau_{i}\otimes a_{0}\otimes a_{1}\otimes b_{0}\otimes b_{1})
&=&
\sum_{j=0}^i
\psi_{0}(\tau_{j}\otimes a_{0}\otimes b_{0})\otimes
\psi_{1}(\tau^j.\tau_{i-j}\otimes a_{1}\otimes b_{1}),\\
\Phi(e_{i}\otimes b_{0}\otimes b_{1}\otimes a_{0}\otimes a_{1})
&=&
\sum_{j=0}^i
\psi_{0}(e_{j}\otimes b_{0}\otimes a_{0})\otimes
\psi_{1}(\tau^j.e_{i-j}\otimes b_{1}\otimes a_{1}).\\
\end{eqnarray*}
If $j$ is even, we have
$$\psi_{1}(\tau^j.e_{i-j}\otimes b_{1}\otimes a_{1})=
\psi_{1}(e_{i-j}\otimes b_{1}\otimes a_{1})=
\psi_{1}(\tau_{i-j}\otimes a_{1}\otimes b_{1})=
\psi_{1}(\tau^j.\tau_{i-j}\otimes a_{1}\otimes b_{1}).$$
A similar computation in the case $j$ odd gives
$$\Phi(\tau_{i}\otimes a_{0}\otimes a_{1}\otimes b_{0}\otimes b_{1})
=
\Phi(e_{i}\otimes b_{0}\otimes b_{1}\otimes a_{0}\otimes a_{1}).$$

\medskip
Consider now a family of $\cE(2)$-algebras, $\psi_{i}\colon \cE(2)\otimes A_{i}^{\otimes^2}\to A_{i}$, 
with $i=0,\ldots,n$. 
As $\cD\colon \cE(2)\to \cE(2)\otimes \cE(2)$ is the diagonal of a Bar resolution, it is a cochain map, coassociative
(\cite{MR2075046}) and we may iterate it as,
$$\cD^2(e_{i})=\sum_{j=0}^i\cD(e_{j})\otimes \tau^j.e_{i-j}=\sum_{j=0}^i \sum_{k=0}^j e_{k}\otimes \tau^k.e_{j-k}\otimes \tau^j.e_{i-j}.$$
If we set $i_{1}=k$, $i_{2}=j-k$, $i_{3}=i-j$, this last expression can be written as
$$\cD^2(e_{i})=
\sum_{(i_{1},i_{2},i_{3})\text{ with }i_{1}+i_{2}+i_{3}=i} e_{i_{1}}\otimes \tau^{i_{1}}.e_{i_{2}}\otimes \tau^{i_{1}+i_{2}}.e_{i_{3}}.
$$
More generally, an induction gives,
$$\cD^{n-1}(e_{i})=\sum_{(i_{1},\ldots,i_{n})\text{ with }i_{1}+\cdots+i_{n}=i} e_{i_{1}}\otimes
\tau^{i_{1}}.e_{i_{2}}\otimes\cdots\otimes \tau^{i_{1}+\cdots+i_{n-1}}.e_{i_{n}}.
$$
As in the previous case of two $\cE(2)$-algebras, the action of  $\cE(2)$ on $\otimes_{i=0}^n A_{i}$
 is obtained from appropriate shuffle maps and the iteration $\cD^{n-1}$ of the diagonal. 
 By using the notation in cup$_{i}$-products, this structure is defined as the map
\begin{equation}\label{equa:tensorE2}
\xymatrix{
\cE(2)\otimes (\otimes_{i=0}^n A_{i})^{\otimes^2}
\ar[r]^-{\Phi}&
\otimes_{i=0}^n A_{i}
}\end{equation}
which sends the element $e_{i}\otimes (\otimes_{i=0}^n x_{i}) \otimes (\otimes_{i=0}^n y_{i})$ to 
$$\sum_{(i_{1},\ldots,i_{n})\text{ with }i_{1}+\cdots+i_{n}=i} (x_{1}\cup_{i_{1}}y_{1})\otimes
(x_{2}\cup^{i_{1}}_{i_{2}}y_{2})\otimes
\cdots\otimes 
(x_{n}\cup^{i_{1}+\cdots+i_{n-1}}_{i_{n}}y_{n}),$$
where we set, for $j\geq 0$,
\begin{equation}\label{equa:taucup}
x\cup_{i}^jy=\left\{\begin{array}{ll}
x\cup_{i}y&\text{ if } j \text{ is even,}\\
y\cup_{i}x&\text{ if } j \text{ is odd.}
\end{array}\right.
\end{equation}
Up to shuffle maps, $\Phi$ is obtained as a composite and tensor product of equivariant cochain maps; thus it satisfies 
the requirements of \defref{def:E2algebra}. Moreover, as we establish below,  the tensor product of nice $\cE(2)$-algebras is a nice $\cE(2)$-algebra.

\begin{lemma}\label{lem:produitnice}
Any tensor product of  nice $\cE(2)$-algebras is a nice $\cE(2)$-algebra for the product structure coming from the diagonal  of $\cE(2)$.
\end{lemma}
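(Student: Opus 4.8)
The plan is to treat first the two-factor case $A\otimes B$, verify the two conditions of \defref{def:niceE2algebra}, and then obtain the general statement by iteration, the iterated diagonal $\cD^{n-1}$ being coassociative. On decomposable elements the product structure is the two-factor instance of the map $\Phi$ of \secref{sec:E2algebras}, namely
$$
(a\otimes b)\cup_{i}(a'\otimes b')=\sum_{j=0}^{i}(a\cup_{j}a')\otimes \tau^{j}.(b\cup_{i-j}b'),
$$
with $\tau.(v\cup_{k}w)=w\cup_{k}v$.

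For condition (ii) I would argue by a degree count. A summand above is nonzero only when both tensor factors are, which by condition (ii) applied in $A$ and in $B$ forces $j\leq\min(|a|,|a'|)$ and $i-j\leq\min(|b|,|b'|)$. Adding these two inequalities yields $i\leq\min(|a|,|a'|)+\min(|b|,|b'|)\leq\min(|a|+|b|,\,|a'|+|b'|)$, so that every summand vanishes as soon as $i>\min(|a\otimes b|,|a'\otimes b'|)$. Bilinearity over a homogeneous basis then gives condition (ii) for arbitrary elements.

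The heart of the proof is condition (i). For a decomposable $x=a\otimes b$ with $|a|=p$ and $|b|=q$, the same two inequalities, now with $i=p+q$, pin down $j=p$ as the unique surviving index, so
$$
(a\otimes b)\cup_{p+q}(a\otimes b)=(a\cup_{p}a)\otimes \tau^{p}.(b\cup_{q}b).
$$
Condition (i) in $A$ and $B$ gives $a\cup_{p}a=a$ and $b\cup_{q}b=b$, and since $\tau$ fixes $b\cup_{q}b$ (its two arguments agree), the right-hand side is $a\otimes b$. The subtle point is the passage to a general $x$ of degree $d$, where cross terms appear. Writing $x=\sum c_{\alpha\beta}\,a_{\alpha}\otimes b_{\beta}$ in a homogeneous basis, with $c_{\alpha\beta}\in\F_{2}$ and $|a_{\alpha}|+|b_{\beta}|=d$, the degree constraints again select, for each ordered pair of basis tensors, the single index $j=|a_{\alpha}|=|a_{\gamma}|$. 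The diagonal pairs reproduce $\sum c_{\alpha\beta}\,a_{\alpha}\otimes b_{\beta}=x$ (using $c^{2}=c$ in $\F_{2}$), while for each off-diagonal pair the two ordered contributions are equal by \lemref{lem:petitlemmenice}, the elements $a_{\alpha},a_{\gamma}$ sharing a degree and likewise $b_{\beta},b_{\delta}$, hence they cancel in characteristic~$2$.

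I expect this last cancellation to be the main obstacle, as it is the only step combining both hypotheses essentially: it rests at once on the symmetry of the top $\cup_{d}$-product furnished by \lemref{lem:petitlemmenice} and on working over $\F_{2}$. Once the two-factor case holds, the general tensor product $A_{1}\otimes\cdots\otimes A_{m}$ follows by induction on $m$, identifying its structure with the iterated two-factor one through the coassociativity of $\cD$.
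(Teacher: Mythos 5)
Your proposal is correct and follows essentially the same route as the paper: reduction to two factors via coassociativity of the diagonal, the degree count forcing $j\leq\min(|a|,|a'|)$ and $i-j\leq\min(|b|,|b'|)$ for condition (ii), and for condition (i) the selection of the single surviving index with matching degrees, the diagonal terms reproducing $x$, and the pairwise cancellation of cross terms in characteristic~$2$ via \lemref{lem:petitlemmenice}. The only difference is presentational (you work over a homogeneous basis with coefficients in $\F_{2}$, the paper absorbs coefficients into a single-indexed sum), which if anything makes the cancellation step slightly more explicit.
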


\begin{proof}
By coassociativity of the diagonal of $\cE(2)$, it is sufficient to reduce the proof to the case of the tensor product of two nice $\cE(2)$-algebras, $A$ and~$B$.

Let $x=\sum_{k} a_{k}\otimes b_{k}\in (A\otimes B)^d$
and
$x'=\sum_{\ell} a'_{\ell}\otimes b'_{\ell}\in (A\otimes B)^{d'}$ with $d\leq d'$. We set $f=d+m$ with $m\geq 0$. One compute
$$x\cup_{f}x'=\sum_{f_{1}+f_{2}=f}\sum_{k,\ell} (a_{k}\cup_{f_{1}}a'_{\ell})
\otimes
(b_{k}\cup^{f_{1}}_{f_{2}}b'_{\ell}).$$
If the element $(a_{k}\cup_{f_{1}}a'_{\ell})
\otimes
(b_{k}\cup^{f_{1}}_{f_{2}}b'_{\ell})$ of this sum is not equal to zero, we must have
$$f_{1}\leq \min(|a_{k}|,|a'_{\ell}|) \text{ and }
f_{2} \leq \min (|b_{k}|,|b'_{\ell}|),$$
which implies 
$f=f_{1}+f_{2}=d+m\leq |a_{k}|+|b_{k}|=d$ and $m=0$. We have established Property (ii) of \defref{def:niceE2algebra}.
As for Property (i), we consider
$$x\cup_{d}x=\sum_{f_{1}+f_{2}=d}\sum_{k,k'} (a_{k}\cup_{f_{1}}a_{k'})\otimes (b_{k}\cup^{f_{1}}_{f_{2}}b_{k'}).$$
As above, if the element $(a_{k}\cup_{f_{1}}a_{k'})
\otimes
(b_{k}\cup^{f_{1}}_{f_{2}}b_{k'})$ of this sum is not equal to zero, we must have
$$f_{1}\leq \min(|a_{k}|,|a_{k'}|) \text{ and }
f_{2} \leq \min (|b_{k}|,|b_{k'}|).$$
Suppose $\min(|a_{k}|,|a_{k'}|)=|a_{k}|$, then we have $|b_{k'}|\leq |b_{k}|$, because $|a_{k}|+|b_{k}|=|a_{k'}|+|b_{k'}|$, and also
$d=|a_{k}|+|b_{k}|= f_{1}+f_{2}\leq |a_{k}|+|b_{k'}|$, which imply
$|b_{k}|=|b_{k'}|$. Therefore, the non-zero elements of this sum must be of the shape
$(a_{k}\cup_{d-r}a_{k'})\otimes (b_{k}\cup^{d-r}_{r}b_{k'})$ 
with
$|a_{k}|=|a_{k'}|=d-r$, $|b_{k}|=|b_{k'}|=r$.
With \lemref{lem:petitlemmenice}, if $a_{k}\neq a_{k'}$, the same term appears twice, as
 $(a_{k}\cup_{d-r}a_{k'})\otimes (b_{k}\cup^{d-r}_{r}b_{k'})$ 
and as
$(a_{k'}\cup_{d-r}a_{k})\otimes (b_{k'}\cup^{d-r}_{r}b_{k})$. Their sum is equal to zero. 
With the same argument applied to the case $b_{k}\neq b_{k'}$, we have reduced the previous expression to
\begin{eqnarray*}
x\cup_{d}x
&=&
\sum_{k}(a_{k}\cup_{d-r}a_{k})\otimes (b_{k}\cup_{r}b_{k})\\
&=&
\sum_{k}a_{k}\otimes b_{k}=x,
\end{eqnarray*}
and Property (i) of \defref{def:niceE2algebra} is established.
\end{proof}

We come back to the intersection setting and recall (\cite{2012arXiv1205.7057C}) that a \emph{perverse cochain complex} is a functor defined on $\hat{\cP}^n$, with values in the category of cochain complexes. A functor from 
$\hat{\cP}^n_{{\rm loose}}$
 with values in the category of cochain complexes is called a \emph{generalized perverse cochain complex}. For instance, if $\ob{K}$ is a \ffs, the association $\ov{p}\mapsto \tN^*_{\ov{p}}(\ob{K})$ is a (generalized) perverse cochain complex and this association is natural in $\ob{K}$.

\begin{definition}\label{def:perverseE2algebra}
Let $A^*_{\bullet}$ be a generalized perverse cochain complex. We denote by $\varphi_{\ov{p}}^{\ov{q}}\colon A_{\ov{p}}^*\to A_{\ov{q}}^*$ the morphism associated to $\ov{p}\leq \ov{q}$.
A \emph{perverse $\cE(2)$-algebra structure} on $A^*_{\bullet}$ is a family of cochain maps,
$\psi_{\ov{p},\ov{q}}\colon \cE(2)\otimes A_{\ov{p}}^*\otimes A_{\ov{q}}^*\to A_{\ov{p}+\ov{q}}^*$, 
verifying
\begin{enumerate}[(i)]
\item a \emph{compatibility condition with perversities:} for any loose perversities, $\ov{p}_{1}$, $\ov{q}_{1}$, $\ov{p}_{2}$, $\ov{q}_{2}$, with $\ov{p}_{1}\leq \ov{p}_{2}$ and $\ov{q}_{1}\leq \ov{q}_{2}$, the following diagram is commutative
$$\xymatrix{
\cE(2)\otimes A_{\ov{p}_{1}}^*\otimes A_{\ov{q}_{1}}^*
\ar[rr]^-{\psi_{\ov{p}_{1},\ov{q}_{1}}}
\ar[d]_{\id\otimes \varphi_{\ov{p}_{1}}^{\ov{p}_{2}}\otimes  \varphi_{\ov{q}_{1}}^{\ov{q}_{2}}}
&&
A_{\ov{p}_{1}+\ov{q}_{1}}^*
\ar[d]^{\varphi_{\ov{p}_{1}+\ov{q}_{1}}^{\ov{p}_{2}+\ov{q}_{2}}}
\\
\cE(2)\otimes A_{\ov{p}_{2}}^*\otimes A_{\ov{q}_{2}}^*.
\ar[rr]^-{\psi_{\ov{p}_{2},\ov{q}_{2}}}
&&
A_{\ov{p}_{2}+\ov{q}_{2}}^*
}$$
\item a \emph{$\Sigma_{2}$-equivariance} as map from $\cE(2)$ to $(\hom(A^*_{\ov{p}}\otimes A^*_{\ov{q}},A^*_{\ov{p}+\ov{q}}))_{\ov{p},\ov{q}}$ with the following $\Sigma_{2}$-action on the codomain:\\
To any family $\eta_{\ov{p},\ov{q}}\colon A^*_{\ov{p}}\otimes A^*_{\ov{q}}\to A^*_{\ov{p}+\ov{q}}$, we associate the family $(\tau\eta)_{\ov{p},\ov{q}}\colon A^*_{\ov{p}}\otimes A^*_{\ov{q}}\to A^*_{\ov{p}+\ov{q}}$, defined by
$(\tau\eta)_{\ov{p},\ov{q}}(x_{1}\otimes x_{2})=\eta_{\ov{q},\ov{p}}(x_{2}\otimes x_{1})$.
\end{enumerate} 
\end{definition}
Equivalently, a \emph{perverse $\cE(2)$-algebra structure} on $A^*_{\bullet}$ is  entirely determined by maps, 
called \emph{perverse $\text{cup}_{i}$-products,} 
$\cup_{i} \colon A^r_{\ov{p}}\otimes A^s_{\ov{q}}\to A^{r+s-i}_{\ov{p}+\ov{q}}$, 
satisfying the previous Leibniz condition and the compatibility conditions with the poset structure of perversities. 
(The two settings are related by $x\cup_{i}y=\psi_{\ov{p},\ov{q}}(e_{i}\otimes x\otimes y)$.)
\emph{Nice perverse $\cE(2)$-algebras} are defined as in \defref{def:niceE2algebra}.

When $A^*_{\bullet}$ is a perverse cochain complex and the sum $\ov{p}+\ov{q}$ replaced by the sum of GM-perversities,
$\ov{p}\oplus\ov{q}$, in \defref{def:perverseE2algebra}, 
we say that $A^*_{\bullet}$ is a \emph{GM-perverse $\cE(2)$-algebra.}

\medskip
Let $\ob{K}$ be a \ffs~and $\sigma\colon \Delta=\Delta^{j_0}\ast\Delta^{j_1}\ast\cdots\ast\Delta^{j_n}\to \ob{K}$. 
With the tensor product of $\cE(2)$-algebras recalled in (\ref{equa:tensorE2}) and the structure of nice $\cE(2)$-algebra defined on the normalized cochain complex in \cite{MR2075046}, we get a structure of nice $\cE(2)$-algebra on
the tensor product
$\tN^*(\ob{K})_{\sigma}=N^*(c\Delta^{j_{0}})\otimes\cdots\otimes N^*(c\Delta^{j_{n-1}})\otimes N^*(\Delta^{j_{n}})$.
The next theorem establishes the compatibility of this structure with the perverse degrees.

\begin{theorem}\label{thm:NXE2}
Let $\ob{K}$ be a \ffs~and $\ov{p}$ be a loose perversity. The generalized perverse cochain complex, $\ov{p}\mapsto \tN^*_{\ov{p}}(\ob{K})$, is a nice perverse $\cE(2)$-algebra, natural in $\ob{K}$, for the filtered face maps.
\end{theorem}

Recall that a continuous map, $f\colon X=(X_{j})_{0\leq j\leq n}\to Y=(Y_{j})_{0\leq j\leq n}$, between pseudomanifolds is 
a \emph{stratum preserving stratified map} if, for any stratum $S'$ of $Y'$, $f^{-1}(S')$ is a union of strata of $X$ 
and $f^{-1}(Y_{n-\ell})=X_{n-\ell}$, for any $\ell\geq 0$. 
As any stratum preserving, stratified map induces a \ffs~map, $\ob{\rm ISing}^{\cF}(X)\to \ob{\rm ISing}^{\cF}(Y)$, 
(see \cite[Example 1.5]{2012arXiv1205.7057C}) the next result is a direct consequence of \thmref{thm:NXE2}.

\begin{corollary}\label{cor:NXE2pseudo}
Let $X$ be a pseudomanifold and $\ov{p}$ be a loose perversity. The generalized perverse cochain complex, $\ov{p}\mapsto \tN^*_{\ov{p}}(\ob{\rm ISing}^{\cF}(X))$,
is a nice perverse $\cE(2)$-algebra, natural in $X$ by stratum preserving stratified maps.
\end{corollary}

\begin{proof}[Proof of \thmref{thm:NXE2}]
A cochain $c\in \tN^*(\ob{K})$ associates to any simplex,
$\sigma\colon \Delta=\Delta^{j_0}\ast\cdots\ast\Delta^{j_n}\to\ob{K}_{+}$,
an element $c_{\sigma}\in N^*(c\Delta^{j_{0}})\otimes\cdots\otimes N^*(c\Delta^{j_{n-1}})\otimes N^*(\Delta^{j_{n}})$. 

If we set $(c\cup_{i}c')_{\sigma}=c_{\sigma}\cup_{i}c'_{\sigma}$, by naturality of the structure of $\cE(2)$-algebra on 
$N^*(c\Delta^{j_{0}})\otimes\cdots\otimes N^*(c\Delta^{j_{n-1}})\otimes N^*(\Delta^{j_{n}})$,
we get a global section
$c\cup_{i}c'\in \tN^*(\ob{K})$. More precisely, we have a $\Sigma_{2}$-equivariant cochain map,
$$\cE(2)\otimes \tN^*(\ob{K})^{\otimes 2}\to \tN^*(\ob{K}),$$
entirely defined by $e_{i}\otimes c\otimes c'\mapsto c\cup_{i}c'$, which gives to $\tN^*(\ob{K})$ a structure of $\cE(2)$-algebra. The niceness of this structure is a direct consequence of \lemref{lem:produitnice}.

The naturality in $\ob{K}$ comes from the naturality of the $\cE(2)$-algebra structure on a tensor product,
already mentioned, 
and from the naturality of the association,
$\ob{K}\mapsto \tN^*_{\ov{p}}(\ob{K})$,
see \cite[Proposition 1.36]{2012arXiv1205.7057C}.

We study now the behavior of this structure with the perverse degree. 
The perversity degree being a local notion, we consider $c$ and $c'$ in
$N^*(c\Delta^{j_{0}})\otimes\cdots\otimes N^*(c\Delta^{j_{n-1}})\otimes N^*(\Delta^{j_{n}})$, with $j_{n}\geq 0$, and $\ell\in\{1,\ldots,n\}$ such that $\Delta^{j_{n-\ell}}\neq\emptyset$. 
We denote by $c_{n-\ell}$ and $c'_{n-\ell}$ the respective restrictions of $c$ and $c'$ to
$N^*(c\Delta^{j_{0}})\otimes\cdots\otimes N^*(\Delta^{j_{n-\ell}}\times\{1\})\otimes\cdots\otimes
N^*(c\Delta^{j_{n-1}})\otimes N^*(\Delta^{j_{n}})$. 

We decompose $c$, $c'$ in
$c=\sum_{s=0}^mc_{0}^s\otimes\cdots\otimes c_{n}^s$, 
$c'=\sum_{t=0}^{m'}c'^t_{0}\otimes\cdots\otimes c'^t_{n}$ and their restriction in
$c_{n-\ell}=\sum_{s=0}^mc_{0}^s\otimes\cdots\otimes \iota_{n-\ell}^*c_{n-\ell}^s\otimes \cdots\otimes c_{n}^s$, 
$c'_{n-\ell}=\sum_{t=0}^mc'^t_{0}\otimes\cdots\otimes \iota_{n-\ell}^*c'^t_{n-\ell}\otimes \cdots\otimes c'^t_{n}$,
where $\iota^*_{n-\ell}$ is induced by the  inclusion $\Delta^{j_{n-\ell}}\times\{1\}\hookrightarrow c\Delta^{j_{n-\ell}}$.
By definition, we have
$$\| c\|_{\ell}=\sup_{s}\{|c_{n-\ell+1}^s\otimes\cdots\otimes c_{n}^s| \text{ such that } c^s_{0}\otimes\cdots\otimes\iota^*_{n-\ell}c_{n-\ell}^s\neq 0\}.$$

Let  $$\xymatrix{
N^*(c\Delta^{j_{0}})\otimes\cdots \otimes N^*(c\Delta^{j_{n-\ell}})\otimes\cdots\otimes N^*(c\Delta^{j_{n-1}})\otimes N^*(\Delta^{j_{n}})\ar[d]^{\hat{\iota}^*_{n-\ell}=\id\otimes \iota^*_{n-\ell}\otimes\id}\\
N^*(c\Delta^{j_{0}})\otimes\cdots\otimes N^*(\Delta^{j_{n-\ell}}\times\{1\})\otimes\cdots\otimes
N^*(c\Delta^{j_{n-1}})\otimes N^*(\Delta^{j_{n}}).
}$$
As the $\text{cup}_{i}$-product is natural, we have
$\hat{\iota}^*_{n-\ell}(c\cup_{i}c')=\hat{\iota}^*_{n-\ell}(c)\cup_{i}\hat{\iota}^*_{n-\ell}(c')$. 
\begin{itemize}
\item If $\hat{\iota}^*_{n-\ell}(c)=0$ or $\hat{\iota}^*_{n-\ell}(c')=0$, we have
$\hat{\iota}^*_{n-\ell}(c)\cup_{i}\hat{\iota}^*_{n-\ell}(c')=0$ and thus
$$\|c\cup_{i}c'\|_{\ell}=-\infty.$$
\item Suppose now $\hat{\iota}^*_{n-\ell}(c)\neq 0$ and $\hat{\iota}^*_{n-\ell}(c')\neq 0$. 
By definition of the 
$\text{cup}_{i}$-product, $\hat{\iota}^*_{n-\ell}(c)\cup_{i}\hat{\iota}^*_{n-\ell}(c')$ is a sum of tensor products whose elements  are of two kinds:
\begin{enumerate}
\item $c^s_{j}\cup_{f_{j}}c'^t_{j}$, with $j\neq n-\ell$, or
\item $\hat{\iota}^*_{n-\ell}(c^s_{n-\ell})\cup_{f_{n-\ell}} \hat{\iota}^*_{n-\ell}(c'^t_{n-\ell})$.
\end{enumerate}
\end{itemize}
As $|c^s_{j}\cup_{f_{j}}c'^t_{j}|\leq |c^s_{j}|+|c'^t_{j}|-f_{j}$, the cochain degree decreases and we obtain, for each $\ell$,
$$\|c\cup_{i}c'\|_{\ell}\leq \|c\|_{\ell}+\|c'\|_{\ell},$$
by definition of the perverse degree, see \defref{def:tranversedegree}.
Therefore, we have
$$\|c\cup_{i}c'\|\leq \|c\|+\|c'\|.$$
Now, the rule of Leibniz implies
$$\|\delta (c\cup_{i}c')\|\leq \max(\|\delta c\|+\|c'\|,\|\delta c'\|+\|c\|,\|c\|+\|c'\|).$$
Thus, if $\|c\|\leq \ov{p}$, $\|\delta c\|\leq \ov{p}$, $\|c'\|\leq \ov{q}$ and $\|\delta c'\|\leq \ov{q}$, we have $\|c\cup_{i}c'\|\leq \ov{p}+\ov{q}$ and 
$\|\delta (c\cup_{i}c')\|\leq \ov{p}+\ov{q}$. This implies that the $\cE(2)$-algebra structure on $\tN^*(\ob{K})$ induces equivariant cochain maps
$$\cE(2)\otimes \tN^*_{\ov{p}}(\ob{K})\otimes \tN^*_{\ov{q}}(\ob{K})\to \tN^*_{\ov{p}+\ov{q}}(\ob{K}).$$
That means: $\tN^*_{\bullet}(\ob{K})$ is a perverse $\cE(2)$-algebra.
\end{proof}

\section{Steenrod perverse squares}\label{sec:steenrod}

From the existence of perverse ${\rm cup}_{i}$-products, we define Steenrod squares, as in the classical case. In the next statement, when $i>0$, the fact that the loose perversity image of $\sq^i$ is $\cL(\ov{p},i)$, defined by
$\cL(\ov{p},i)(\ell)=\min(2\ov{p}(\ell), \ov{p}(\ell)+i)$, answers positively a conjecture of M.~Goresky and W.~Pardon, see \cite[Conjecture 7.5]{MR1014465}. More explicitly, we prove the existence of a dashed arrow which lifts the square $\sq^i$,
$$\xymatrix{
&H^{r+i}_{\TW,\cL(\ov{p},i)}\ar[d]\\
H^r_{\TW,\ov{p}}\ar[r]^-{\sq^i}\ar@{-->}[ur]&
H^{r+i}_{\TW,2\ov{p}}.
}$$
We still denote by $\sq^i$ this lifting.
\begin{theorem}\label{thm:steenrodsquare}
Let $\ob{K}$ be a \ffs~and $\ov{p}$, $\ov{q}$ be loose perversities
The perverse $\text{cup}_{i}$-products induce natural perverse squares, defined by $\sq^i(x)=x\cup_{|x|-i}x$, for $x\in H^{|x|}_{\TW,\ov{p}}(\ob{K})$, which satisfy the following properties.
\begin{enumerate}[(1)]
\item If $i<0$, then $\sq^i(x)=0$.
\item If $i\geq 0$, then we have
$$\sq^i\colon H^r_{\TW,\ov{p}}(\ob{K})\to H^{r+i}_{\TW,\cL(\ov{p},i)}(\ob{K}),
$$
where $\cL(\ov{p},i)=\min(2\ov{p}, \ov{p}+i)$ and 
\begin{enumerate}[(i)]
\item $\sq^i(x)=0$ if $i>|x|$,
\item $\sq^{|x|}(x)=x^2$,
\item $\sq^0=\id$.
\item If $x\in H^{|x|}_{\TW,\ov{p}}(\ob{K})$, $y\in H^{|y|}_{\TW,\ov{q}}(\ob{K})$,  one has the (internal) \emph{Cartan formula,}
$$\sq^i(x\cup y)=\sum_{i_{1}+{i_{2}}=i} \sq^{i_{1}}(x)\cup \sq^{i_{2}}(y)\in H^{|x|+|y|+i}_{\TW,\ov{r}}(\ob{K}),$$  
with $\ov{r}=\min (2\ov{p}+2\ov{q},\ov{p}+\ov{q}+i)$ and $\cup=\cup_{0}$.
\item For any pair $(i,j)$, with $i<2j$, one has the \emph{Adem relation,}
$$\sq^i\sq^j=\sum_{k=0}^{[i/2]}\left(
\begin{array}{c}
j-k-1\\
i-2k
\end{array}\right) \sq^{i+j-k}\sq^k$$
and $\sq^i\sq^j$ sends $H_{\TW,\ov{p}}^*$  into $H_{\TW,\ov{r}}^{*+i+j}$, with
$\ov{r}=\min(4\ov{p},2\ov{p}+i,\ov{p}+i+j)$.
\end{enumerate}
\end{enumerate}
\end{theorem}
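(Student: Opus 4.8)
The plan is to separate the elementary operadic consequences from the genuinely new perversity estimate, treating the latter as the heart of the argument. First I would dispose of the formal properties (1), (2)(i), (2)(ii) and (2)(iii) directly from the fact, established in \thmref{thm:NXE2}, that $\tN^*_\bullet(\ob{K})$ is a \emph{nice} perverse $\cE(2)$-algebra. Writing $\sq^i(x)=x\cup_{|x|-i}x$ for $x\in H^{|x|}_{\ov{p}}(\ob{K})$, the cup index $|x|-i$ falls out of range exactly when $i<0$ (giving $|x|-i>|x|=\min(|x|,|x|)$, hence $0$ by niceness~(ii)) or when $i>|x|$ (negative index, hence $0$); the boundary cases $i=|x|$ and $i=0$ give $x\cup_0 x=x^2$ and, by niceness~(i), $x\cup_{|x|}x=x$. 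None of these require more than the cochain-level identities of \defref{def:niceE2algebra}. Moreover, choosing $x$ to be a cocycle makes $x\cup_{|x|-i}x$ a cocycle by the Leibniz rule (the two middle terms cancel over $\F_2$), so only the admissibility of $\sq^i(x)$ itself needs to be controlled.

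For the Cartan formula (iv) and the Adem relation (v) I would invoke May's axiomatic treatment in \cite{MR0281196}: once a (perverse) $\cE(2)$-algebra structure is in hand, these hold as identities among the induced squares, the Cartan formula using the compatibility of the diagonal $\cD$ with the product and the Adem relation requiring the passage to a perverse $\cE(4)$-algebra over $\F_2$, built exactly as in \secref{sec:E2algebras} from the iterated diagonal. The additional work is the perversity bookkeeping: the target of $\sq^i(x\cup y)$ is $\cL(\ov{p}+\ov{q},i)=\min(2\ov{p}+2\ov{q},\ov{p}+\ov{q}+i)$, which is the announced $\ov{r}$, and one checks the right-hand side lands there since $\cL(\ov{p},i_1)+\cL(\ov{q},i_2)\leq\ov{r}$; for Adem one iterates the single-square bound, obtaining $\cL(\cL(\ov{p},j),i)=\min(4\ov{p},2\ov{p}+2j,2\ov{p}+i,\ov{p}+i+j)$, and the hypothesis $i<2j$ renders $2\ov{p}+2j$ redundant, leaving $\ov{r}=\min(4\ov{p},2\ov{p}+i,\ov{p}+i+j)$.

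The crucial new content, and the proof of the Goresky--Pardon conjecture, is the bound $\|\sq^i(x)\|_\ell\leq\cL(\ov{p},i)(\ell)=\min(2\ov{p}(\ell),\ov{p}(\ell)+i)$. The inequality $\leq 2\ov{p}(\ell)$ is immediate from $\|c\cup_j c'\|_\ell\leq\|c\|_\ell+\|c'\|_\ell$ proved inside \thmref{thm:NXE2}. For the refined bound I would argue at the cochain level on a single blow-up prism. Fixing $\ell$ and restricting to $\Delta^{j_{n-\ell}}\times\{1\}$ via $\hat{\Phi}^*_{n-\ell}$, I decompose a cocycle representative $x=\sum_s c^s_1\otimes\cdots\otimes c^s_n$ and expand $x\cup_{|x|-i}x$ through the diagonal $\cD^{n-1}$, so each surviving summand is a tensor product $\bigotimes_k(c^s_k\cup_{f_k}c^t_k)$ with $f_1+\cdots+f_n=|x|-i$. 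Niceness~(ii) forces $f_k\leq|c^s_k|$ on every factor, so the head indices obey $\sum_{k\leq n-\ell}f_k\leq\sum_{k\leq n-\ell}|c^s_k|=|x|-P_s$, where $P_s=\sum_{k>n-\ell}|c^s_k|$ is the tail degree of the $s$-term. Hence $\sum_{k>n-\ell}f_k\geq P_s-i$, and the $\ell$-perverse degree of the summand, namely $P_s+P_t-\sum_{k>n-\ell}f_k$, is at most $P_t+i$. Since a surviving summand has nonzero restricted head, the $t$-term also has nonzero restricted head, so $P_t\leq\|x\|_\ell\leq\ov{p}(\ell)$; this gives $\|x\cup_{|x|-i}x\|_\ell\leq\ov{p}(\ell)+i$, and combining with the $2\ov{p}(\ell)$ bound completes the estimate.

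I expect the main obstacle to be twofold. Conceptually, the key is to notice that niceness~(ii) bounds each partial cup index $f_k$ by the local cochain degree, which is precisely what converts the cup index into a gain on the transverse degree; once this is seen, the estimate above is short and robust. The more technical obstacle is the Adem relation, since making its proof compatible with perversities forces the explicit construction of a perverse $\cE(4)$-algebra and a verification that the diagonal-based distribution of cup indices respects the perverse-degree filtration, essentially re-running the estimate of \thmref{thm:NXE2} one operad-arity higher. The Cartan formula, by contrast, should follow smoothly once the single-square bound and the naturality of the $\cE(2)$-structure are in place.
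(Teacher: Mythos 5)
Your proposal is correct and follows essentially the same route as the paper: your head/tail index bookkeeping on the blow-up prism, where niceness bounds each local cup index by the local cochain degree and thereby converts the cup index into a gain on the transverse degree, is exactly the content of \lemref{lem:goreskypardon} (stated there for a two-factor tensor product, equivalent to your $n$-factor version by coassociativity of the diagonal), and your handling of the formal properties via niceness, of the Cartan formula via the external formula, and of the Adem relation via a perverse $\cE(4)$-structure (May's Adem-object) matches the paper's proof. The only difference is cosmetic: you make explicit the perversity bookkeeping, namely $\cL(\ov{p},i_{1})+\cL(\ov{q},i_{2})\leq \ov{r}$ for Cartan and $\cL(\cL(\ov{p},j),i)=\ov{r}$ when $i<2j$ for Adem, which the paper states but leaves unverified.
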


Before proving this theorem, we establish a technical property on the tensor product of two nice $\cE(2)$-algebras, 
which is the keystone in the proof of \thmref{thm:steenrodsquare}.

\begin{lemma}\label{lem:goreskypardon}
Let $A$ and $B$ be two nice $\cE(2)$-algebras and $A\otimes B$ their tensor product equipped with the $\cE(2)$-algebra structure coming from the diagonal of $\cE(2)$. Let $x$, $x'$ in $A$, $y$, $y'$ in $B$ such that $|x|+|y|=|x'|+|y'|=d$, $|y|\leq r$ and $|y'|\leq r$. Then,  for any $k\in \{0,\ldots, d-i\}$ such that
$(x\cup_{d-k-i}x')\otimes (y\cup^{d-k-i}_{k}y')\neq 0$, we have
$|y\cup^{d-k-i}_{k}y'|\leq r+i$.
\end{lemma}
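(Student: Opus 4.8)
The plan is to reduce the assertion to a direct degree count, the only real input being property~(ii) of niceness (\defref{def:niceE2algebra}), applied on the factor $A$. First I would record the elementary degree identities. Since the $\text{cup}_{k}$-product lowers total degree by $k$, the quantity to be estimated is $|y\cup_{k}y'|=|y|+|y'|-k$; and the hypotheses $|x|+|y|=|x'|+|y'|=d$ give $|x|=d-|y|$ and $|x'|=d-|y'|$, whence $\min(|x|,|x'|)=d-\max(|y|,|y'|)$.

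The key step is to exploit the non-vanishing assumption on the correct tensor factor. Working over $\F_{2}$, the element $(x\cup_{d-k-i}x')\otimes(y\cup_{k}y')$ is non-zero only if both factors are non-zero; in particular $x\cup_{d-k-i}x'\neq 0$. By property~(ii) of \defref{def:niceE2algebra}, applied inside the nice $\cE(2)$-algebra $A$, this forces $d-k-i\leq\min(|x|,|x'|)$, i.e. the \emph{lower} bound $k\geq d-i-\min(|x|,|x'|)$.

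It then remains to substitute this bound and simplify. One gets
\[
|y\cup_{k}y'|=|y|+|y'|-k\leq |y|+|y'|-d+i+\min(|x|,|x'|)=\min(|y|,|y'|)+i\leq r+i,
\]
using the identity $\min(|x|,|x'|)=d-\max(|y|,|y'|)$ together with the hypothesis $\min(|y|,|y'|)\leq r$. I do not expect a genuine obstacle here: the whole content is bookkeeping of degrees. The one point deserving attention is that niceness must be invoked on the $A$-factor rather than the $B$-factor: the non-vanishing of $y\cup_{k}y'$ only yields $k\leq\min(|y|,|y'|)$, an upper bound on $k$ that is useless, whereas it is precisely the non-vanishing of $x\cup_{d-k-i}x'$ that produces the lower bound on $k$ driving the estimate.
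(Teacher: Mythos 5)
Your proof is correct and follows essentially the same route as the paper's: the key step in both is that non-vanishing of the $A$-factor $x\cup_{d-k-i}x'$, via property (ii) of \defref{def:niceE2algebra}, forces the lower bound $k\geq d-i-\min(|x|,|x'|)$, after which the estimate is pure degree bookkeeping. The only cosmetic difference is that you eliminate the paper's case distinction (``suppose $\min(|x|,|x'|)=|x|$'') by using the identity $\min(|x|,|x'|)=d-\max(|y|,|y'|)$, which yields the same bound $\min(|y|,|y'|)+i\leq r+i$ in one line.
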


\begin{proof}
Suppose ${d-k-i}$ even. 
If $(x\cup_{d-k-i}x')\otimes (y\cup_{k}y')\neq 0$, we must have
$k\leq \min(|y|,|y'|)$ and $d-k-i\leq \min(|x|,|x'|)$,
which implies
$$d-i-\min(|x|,|x'|)\leq k.$$
Suppose $\min(|x|,|x'|)=|x|$. Then we have
\begin{eqnarray*}
|y|+|y'|-d+i+\min(|x|,|x'|)
&=&
|y|+|y'|-(|x|+|y|)+i+|x|\\
&=& |y'|+i,
\end{eqnarray*}
which implies
$$|y\cup_{k}y'|\leq |y|+|y'|-k\leq |y|+|y'|-d+i+\min(|x|,|x'|)\leq |y'|+i \leq r+i.$$
A similar argument gives the result in the case $\min(|x|,|x'|)=|x'|$.
Also, the proof is analogous to the previous one if ${d-k-i}$ is odd, since $|y'\cup_{k}y|\leq |y'|+|y|-k$.
\end{proof}

Directly from the definition of ${\rm cup}_{k}$-products, the inequalities $|y|\leq r$
and $|y'|\leq r$
imply
$|y\cup_{k}y'|\leq 2r$. Thus, the bound $|y\cup_{k}y'|\leq r+i$ obtained in \lemref{lem:goreskypardon} is exactly what is needed for the proof of the Goresky-Pardon conjecture, as we show in the beginning of the next proof.

\begin{proof}[Proof of \thmref{thm:steenrodsquare}]
Let $i\geq 0$.
From their definition as particular ${\rm cup}_{i}$-products, the Steenrod squares have their image in the intersection cohomology with loose perversity $2\ov{p}$. We  prove first that the loose perversity $2\ov{p}$ can be replaced by
$\cL(\ov{p},i)$.
We take over the arguments and the method used at the end of the proof of \thmref{thm:NXE2} by considering a cocycle
$c\in N^*(c\Delta^{j_{0}})\otimes\cdots\otimes N^*(c\Delta^{j_{n-1}})\otimes N^*(\Delta^{j_{n}})$, $\ell\in\{1,\ldots,n\}$, such that $\Delta^{j_{n-\ell}}\neq\emptyset$, and the restriction $c_{n-\ell}$ of $c$ to
$N^*(c\Delta^{j_{0}})\otimes\cdots\otimes N^*(\Delta^{j_{n-\ell}}\times\{1\})\otimes\cdots\otimes
N^*(c\Delta^{j_{n-1}})\otimes N^*(\Delta^{j_{n}})$.  Observe first that, by naturality, we have
$(c\cup_{|c|-i}c)_{n-\ell}=c_{n-\ell}\cup_{|c_{n-\ell}|-i}c_{n-\ell}$.

$\bullet$ If $c_{n-\ell}=0$, we have $(c\cup_{|c|-i}c)_{n-\ell}=0$ and $\|c\cup_{|c|-i}c\|_{\ell}=-\infty$. 

$\bullet$ If $c_{n-\ell}\neq 0$, we decompose it in a canonical form,
$c_{n-\ell}=\sum_{s}c'^s_{n-\ell}\otimes c''^s_{n-\ell}\in A\otimes B$,
with
$A=N^*(c\Delta^{j_{0}})\otimes\cdots\otimes N^*(\Delta^{j_{n-\ell}}\times\{1\})$
and
$B=N^*(c\Delta^{j_{n-\ell+1}})\otimes\cdots\otimes
N^*(c\Delta^{j_{n-1}})\otimes N^*(\Delta^{j_{n}})$. Using \lemref{lem:goreskypardon}, we know that
$(c'^s_{n-\ell}\cup_{|c_{n-\ell}|-k-i} c'^t_{n-\ell})\otimes
(c''^s_{n-\ell}\cup^{|c_{n-\ell}|-k-i}_{k}c''^t_{n-\ell})\neq 0$
implies
$|c''^s_{n-\ell}\cup^{|c_{n-\ell}|-k-i}_{k}c''^t_{n-\ell}|\leq \ov{p}(\ell)+i$,
for any pair of indices, $(s,t)$, in the writing of $c_{n-\ell}$. This implies
$\|c\cup_{|c|-i}c\|\leq \ov{p}+i$, as announced.

The condition on the perversity of the differential of $c\cup_{|c|-i} c$ is immediate here because~$c$ is a cocycle, and
the naturality follows from the fact that the lifting already exists at the level of the spaces of cocycles.

\medskip
The list ({\it{1}}), ({\it{2})-(\it{i})}, ({\it{2})-(\it{ii})}, ({\it{2})-(\it{iii})} 
of properties is a direct consequence of
 \thmref{thm:NXE2} and \cite[Section~5]{MR0281196}. 
 
 \medskip
 Let $A$ and $B$ be two nice $\cE(2)$-algebras. By definition of the diagonal action of $\cE(2)$ on the tensor product,
 we have a  Cartan external formula,
 $$\sq^i(a\otimes b)=\sum_{i_{1}+{i_{2}}=i} \sq^{i_{1}}(a)\otimes \sq^{i_{2}}(b),$$
 for
 $a\in A$
 and
 $b\in B$.  
 In our case, each factor, $A$ and $B$, satisfies the Cartan internal formula. Therefore, the 
Cartan internal formula on $A\otimes B$ is a direct consequence of the next equalities: 
 \begin{eqnarray*}
 \sq^i((a\otimes b)\cup (a'\otimes b')) &=_{(1)}&
 \sq^i((a\cup a')\otimes (b\cup b'))\\
 &=_{(2)}&
 \sum_{i_{1}+i_{2}=i}\sq^{i_{1}}(a\cup a')\otimes \sq^{i_{2}}(b\cup b')\\
 &=_{(3)}&
 \sum_{j_{1}+j_{2}+k_{1}+k_{2}=i}
 (\sq^{j_{1}}(a)\cup \sq^{j_{2}}(a'))\otimes (\sq^{k_{1}}(b)\cup\sq^{k_{2}}(b'))
 \end{eqnarray*}
 and
{\footnotesize  \begin{eqnarray*}
 \sum_{i_{1}+i_{2}=i}\sq^{i_{1}}(a\otimes b)\cup \sq^{i_{2}}(a'\otimes b')
 &=_{(2)}&
 \sum_{s_{1}+s_{2}+t_{1}+t_{2}=i}(\sq^{s_{1}}(a)\otimes\sq^{s_{2}}(b))\cup
 (\sq^{t_{1}}(a')\otimes \sq^{t_{2}}(b'))\\
 &=_{(1)}&
 \sum_{s_{1}+s_{2}+t_{1}+t_{2}=i}
 (\sq^{s_{1}}(a)\cup \sq^{t_{1}}(a'))\otimes (\sq^{s_{2}}(b)\cup \sq^{t_{2}}(b')),
 \end{eqnarray*}}\noindent
where  $=_{(1)}$ comes from the definition of the cup-product on a tensor product, 
 $=_{(2)}$ from the application of the  Cartan external formula and
 $=_{(3)}$ from the Cartan internal formula on each factor. 
   
 \medskip
For the Adem's formula ({\it{2})-(\it{v})}, we need to recall some properties in order to track the perversity conditions. 
The classical proof uses the Bar resolution, $\cE(4)$, of $\F_{2}$ as a $\Sigma_{4}$-module, and the existence of a $\Sigma_{4}$-equivariant cochain map,
$\cE(4)\otimes N^*(L)^{\otimes 4}\to N^*(L)$, for any simplicial set $L$, called an $\cE(4)$-algebra. 
As these objects appear just in this part of proof, we do not recall them in detail, 
referring to \cite[Section~1]{MR2075046}. We mention only the points related to the control of perversities. 

Denote by $\omega\colon \cE(2)\otimes\cE(2)\otimes\cE(2)\to \cE(4)$ the cochain map induced by the wreath product
$\Sigma_{2}\times \Sigma_{2}\times \Sigma_{2}\to \Sigma_{4}$. Let $A$ be an $\cE(2)$ and an $\cE(4)$-algebra whose structure maps are respectively denoted $\psi_{2}$ and $\psi_{4}$. By definition, we say that $A$ is an Adem-object (\cite{MR0281196}) if there is a commutative diagram
$$
\xymatrix@C=2cm{
\cE(2)\otimes\cE(2)^{\otimes 2}\otimes A^{\otimes 4}\ar[r]^-{\omega\otimes \id}\ar[d]_{\rm{Sh}}&
\cE(4)\otimes A^{\otimes 4}\ar[r]^-{\psi_{4}}&A\\
\cE(2)\otimes (\cE(2)\otimes A^{\otimes 2})^{\otimes 2}\ar[r]_-{\id\otimes \psi_{2}^{\otimes 2}}&
\cE(2)\otimes A^{\otimes 2}\ar[ru]_-{\psi_{2}}&
}$$
where {\rm {Sh}} is the appropriate shuffle map. 
 
 Let $\Delta=\Delta^{j_{0}}\ast \cdots\ast \Delta^{j_{n}}$ and
 $A=N^*(c\Delta^{j_{0}})\otimes\cdots\otimes N^*(c\Delta^{j_{n-1}})\otimes N^*(\Delta^{j_{n}})$.
Because  $N^*(L)$ is an Adem-object for any simplicial set $L$ and because 
the tensor product of two nice $\cE(2)$-algebras which are Adem-objects is an Adem-object
(\cite[Lemma 4.2, Page 174]{MR0281196}),
$A$ is an Adem-object. 

In \thmref{thm:NXE2}, we prove that $\psi_{2}$ restricts to a map
$\cE(2)\otimes A_{\ov{p}}\otimes A_{\ov{q}}\to A_{\ov{p}+\ov{q}}$.  
Exactly the same argument can be used for $\psi_{4}$,
 replacing $c\cup_{i}c'$ by
$\psi_{4}(\alpha_{i}\otimes c_{1}\otimes c_{2}\otimes c_{3}\otimes c_{4})$
for each $\alpha_{i}\in \cE(4)$, in the last part of the proof of \thmref{thm:NXE2}.
 Thus $\psi_{4}$ restricts to a map
$\cE(4)\otimes A_{\ov{p}_{1}}\otimes A_{\ov{p}_{2}}\otimes A_{\ov{p}_{3}}\otimes A_{\ov{p}_{4}}
\to A_{\ov{p}_{1}+\ov{p}_{2}+\ov{p}_{3}+\ov{p}_{4}}
$ and we get an  Adem formula for intersection cohomology.

Successive applications of \lemref{lem:goreskypardon} show that the non-zero terms
 in the right-hand side of the Adem relation
belong to intersection cohomology in perversities less than, or equal to,
$\min(4\ov{p},2\ov{p}+2j,2\ov{p}+i,\ov{p}+i+j)\leq
\min(4\ov{p},2\ov{p}+i,\ov{p}+i+j)
$, since $i<2j$. The same argument applied to the left-hand side implies that the non-zero terms
belong also to intersection cohomology in the same range of perversities.
\end{proof}

\begin{remark}\label{rem:GMperversities}
Previous definitions and results can be adapted to the context of GM-perversities. By restricting to GM-perversities $\ov{p}$ and $\ov{q}$ such that $\ov{p}+\ov{q}\leq \ov{t}$, the $\text{cup}_{i}$-products are defined by
$$\cup_{i}\colon A_{\ov{p}}^r\otimes A_{\ov{q}}^s\to A_{\ov{p}\oplus\ov{q}}^{r+s-i},$$
where the sum
$\ov{p}\oplus\ov{q}$ is taken in the lattice $\cP^n$, see \cite{MR2544388} or \cite[Section 2.1]{2012arXiv1205.7057C}.
The  Steenrod squares introduced in \secref{sec:steenrod},
$$\sq^i\colon H^r_{\TW,\ov{p}}\to H^{r+i}_{\TW,\ov{r}},$$ are 
therefore defined for GM-perversities $\ov{p}$, $\ov{r}$ such that
$\min(2\ov{p},\ov{p}+i)\leq \ov{r}$.
\end{remark}

\section{Comparison with Goresky's construction}\label{sec:goresky}

As this section is concerned with isomorphisms between different definitions of Steenrod squares in
intersection cohomology, in some crucial points, we keep all the information in the notations of cohomology groups.

In \cite{MR696691} (see also \cite[Chapter V]{MR2401086}), the intersection cohomology on a pseudomanifold, $X$, 
is introduced by the use of  a sheaf due to Deligne. 
The Deligne's sheaf, $\ds_{\ov{p}}$, is defined by a sequence of truncations starting from the constant sheaf on 
$X_{n}\backslash X_{n-2}$. As we are not using this specific construction, 
we do not recall it, sending the reader to the previous references.

In \cite{MR761809}, M. Goresky has already defined Steenrod squares, $\sqg^i$, on the intersection cohomology, 
$H^*(X;\ds_{\ov{p}})$,
of a topological pseudomanifold, $X$, in the case of a GM-perversity~$\ov{p}$. 
In this section, we prove that the two Steenrod squares, $\sq^i$ and $\sqg^i$,  coincide.

Recall the \ffs~$\ob{\rm ISing}^{\cF}(X)$ introduced in \remref{rem:ffspseudomanifold}.
 The next result connects Goresky's definition of Steenrod squares on 
 $H^*(X;\cP_{\ov{p}})$
to our definition of Steenrod squares on the TW-cohomology of the \ffs~$\ob{\rm ISing}^{\cF}(X)$, 
denoted $H^*_{\TW,\ov{p}}(X)$.

\begin{theorem}\label{thm:Goreskyandblowup}
Let $X$ be an $n$-dimensional topological pseudomanifold. For any GM-perversity $\ov{q}$,  there exists an
isomorphism $\theta^{*}_{\ov{q}}\colon H^*_{\TW,\ov{q}}(X)\to H^*(X;\ds_{\ov{q}})$. 
Moreover, if $\ov{p}$ is a GM-perversity  
such that $2\ov{p}\leq \ov{t}$, then the following diagram commutes,
$$\xymatrix{
H^{r}_{\TW,\ov{p}}(X)\ar[r]^-{\sq^i}\ar[d]_{\theta^r_{\ov{p}}}&
H^{r+i}_{\TW,\cL(\ov{p},i)}(X)\ar[r]&
H^{r+i}_{\TW,2\ov{p}}(X)\ar[d]^{\theta^{r+i}_{2\ov{p}}}\\
H^r(X;\ds_{\ov{p}})\ar[rr]^{\sqg^i}&&
H^{r+i}(X;\ds_{2\ov{p}}).
}$$
\end{theorem}

The previous statement implies that 
$\theta^{r+i}_{\cL(\ov{p},i)}\circ \sq^i\circ (\theta^r_{\ov{p}})^{-1}\colon H^r(X;\ds_{\ov{p}})\to H^{r+i}(X;\ds_{\cL(\ov{p},i)})$
is a lift of the Steenrod squares defined by Goresky,
$\sqg^i\colon H^r(X;\ds_{\ov{p}})
\to H^{r+i}(X;\ds_{2\ov{p}})$.
Therefore \emph{the Goresky-Pardon conjecture has a positive answer.}

\medskip
From the functor $\tN^*$, we  define a presheaf on $X$ by
$$IN^*_{\ov{p}}(U)=\tN_{\ov{p}}^*(\ob{{\rm ISing}}^{\cF}(U)),$$
for any open set $U$ of $X$. 
Denote by $\cov(U)$ the directed set of open covers of $U$, ordered by inclusions. For any $\cU\in\cov(U)$, 
$\ob{\rm ISing}^{\cF,\cU}(U)$  is the sub-\ffs~of $\ob{{\rm ISing}}^{\cF}(U)$ whose elements have a support included in an element of $\cU$. The sheafification of $IN^*_{\ov{p}}$ is given by
$$\IN^*_{\ov{p}}(U)=\lim_{\cU\in \cov(U)}\tN_{\ov{p}}^*(\ob{{\rm ISing}}^{\cF,\cU}(U)),$$
see \cite[Exemple 3.9.1.]{MR0345092} in the case of singular cochains.
The ${\rm cup}_{i}$-products introduced in  \secref{sec:steenrod} on $\tN^*_{\bullet}(\ob{{\rm ISing}}^{\cF,\cU}(U))$ induce ${\rm cup}_{i}$-products on $\IN^*_{\bullet}(U)$, by definition of the last one as a direct limit.

\smallskip
\thmref{thm:Goreskyandblowup} is a direct consequence of Lemmas \ref{lem:prefaisceaucup} and \ref{lem:goresky=}. 
First, we connect the definition of Steenrod squares on $\ob{\rm ISing}^{\cF}(X)$ with a definition involving the sheaf $\IN^*_{\bullet}$ on $X$.

\begin{lemma}\label{lem:prefaisceaucup}
For any $n$-dimensional topological pseudomanifold, $X$, and any  GM-perversity $\ov{p}$, we have a commutative diagram,
$$\xymatrix{
H^r_{\TW,\ov{p}}(X)
\ar[r]^-{\sq^i}\ar[d]_{\cong}&
H^{r+i}_{\TW,\cL(\ov{p},i)}(X)
\ar[d]^-{\cong}\\
H^r(X;\IN_{\ov{p}})\ar[r]^-{\sq^i}&
H^{r+i}(X;\IN_{\cL(\ov{p},i)}),
}$$
in which vertical maps are quasi-isomorphisms induced by the canonical map $IN^*_{\bullet}\to \IN^*_{\bullet}$.
\end{lemma}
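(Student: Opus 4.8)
The plan is to establish that the vertical maps in the diagram are quasi-isomorphisms compatible with the $\text{cup}_i$-products, and then to verify that the induced Steenrod squares agree on both sides. First I would explain why the canonical presheaf map $IN^*_{\ov{p}} \to \IN^*_{\ov{p}}$ induces a quasi-isomorphism on global sections. Here $IN^*_{\ov{p}}(U) = \tN^*_{\ov{p}}(\ob{\rm ISing}^{\cF}(U))$ and $\IN^*_{\ov{p}}$ is its sheafification, computed as the direct limit over open covers $\cU \in \cov(U)$ of $\tN^*_{\ov{p}}(\ob{\rm ISing}^{\cF,\cU}(U))$. The key point is that passing from all filtered singular simplices to those with support subordinate to a cover does not change intersection cohomology: this is the intersection-cohomology analogue of the classical fact that small singular simplices compute ordinary cohomology. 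I would invoke the barycentric subdivision / excision machinery already available in \cite{2012arXiv1205.7057C} for filtered face sets, noting that subdivision is compatible with the filtration and with the perverse degree, so that $\tN^*_{\ov{p}}(\ob{\rm ISing}^{\cF,\cU}(U)) \hookrightarrow \tN^*_{\ov{p}}(\ob{\rm ISing}^{\cF}(U))$ is a quasi-isomorphism; taking the limit over covers preserves this.

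Next I would address the compatibility of the $\text{cup}_i$-products with these maps. Since the $\text{cup}_i$-products on $\IN^*_{\bullet}(U)$ are defined precisely as the direct limit of those on the $\tN^*_{\bullet}(\ob{\rm ISing}^{\cF,\cU}(U))$, and the latter are the restrictions of the $\text{cup}_i$-products on $\tN^*_{\bullet}(\ob{\rm ISing}^{\cF}(U))$ constructed in \thmref{thm:NXE2}, the canonical map $IN^*_{\bullet} \to \IN^*_{\bullet}$ is a morphism of perverse $\cE(2)$-algebras. In particular it commutes with the operations $\cup_{|x|-i}$ on cohomology classes, hence with $\sq^i$. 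This is where one must be a little careful about the perverse bookkeeping: the horizontal arrows land in $H^{r+i}_{\cL(\ov{p},i)}$ rather than $H^{r+i}_{2\ov{p}}$, and the refinement to $\cL(\ov{p},i)$ established in the proof of \thmref{thm:steenrodsquare} is a statement at the level of cochains (the perverse-degree estimate of \lemref{lem:goreskypardon}). Because subdivision and the sheafification limit are both computed on the same cochain complexes and preserve the perverse degree, the lifting to $\cL(\ov{p},i)$ is respected by the vertical maps, so the square of the lemma genuinely commutes with the lifted $\sq^i$ and not merely its composite into $H^{r+i}_{2\ov{p}}$.

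I would then assemble these observations. The upper horizontal arrow is the $\sq^i$ on $\tN^*_{\bullet}(\ob{\rm ISing}^{\cF}(X))$ from \thmref{thm:steenrodsquare}; the lower one is the $\sq^i$ on the sheaf cohomology $H^*(X;\IN_{\bullet})$ induced by the limit $\text{cup}_i$-products. The vertical arrows are the quasi-isomorphisms just described, and naturality of the whole $\cE(2)$-structure under the presheaf morphism forces commutativity. The main obstacle, and the step I would spend the most effort justifying, is the quasi-isomorphism statement for the vertical maps: one must confirm that the filtered subdivision argument of \cite{2012arXiv1205.7057C} really controls the perverse degree uniformly (so that a $\ov{p}$-intersection cochain subdivides to a $\ov{p}$-intersection cochain, with no degree drift), and that the direct limit over $\cov(U)$ is exact and commutes with cohomology, which it does since filtered colimits are exact over a field. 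Once that geometric input is in hand, everything else is formal naturality of the $\cE(2)$-algebra structure, and the commutativity of the diagram follows immediately.
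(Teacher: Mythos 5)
Your first step (the quasi-isomorphism $\tN^*_{\ov{p}}(\ob{\rm ISing}^{\cF,\cU}(U))\hookrightarrow \tN^*_{\ov{p}}(\ob{\rm ISing}^{\cF}(U))$, passage to the direct limit over $\cov(U)$) and your compatibility argument for the ${\rm cup}_{i}$-products and the lift to $\cL(\ov{p},i)$ do match the paper's proof. But there is a genuine gap in the middle: you treat $H^r(X;\IN_{\ov{p}})$ as if it were the cohomology of the global sections $\Gamma(X,\IN^*_{\ov{p}})$. In the lemma — and in \lemref{lem:goresky=}, where this cohomology is compared with the Deligne sheaf in the derived category — $H^r(X;\IN_{\ov{p}})$ is sheaf hypercohomology, computed from an injective (or otherwise acyclic) resolution of the complex of sheaves $\IN^*_{\ov{p}}$. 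Your argument produces a quasi-isomorphism from the presheaf sections $IN^*_{\ov{p}}(X)=\tN^*_{\ov{p}}(\ob{\rm ISing}^{\cF}(X))$ to the sheaf sections $\Gamma(X,\IN^*_{\ov{p}})$, but it never bridges $H^*(\Gamma(X,\IN^*_{\ov{p}}))$ and $H^*(X;\IN^*_{\ov{p}})$. Without that bridge the vertical arrows of the diagram are not the claimed quasi-isomorphisms into sheaf cohomology, and the bottom $\sq^i$ (which you define via the limit ${\rm cup}_{i}$-products on sections) is not yet defined on the object actually appearing in the lemma.

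The paper closes exactly this gap by proving that the sheaf $\IN^*_{\ov{p}}$ is \emph{soft}: $IN^*_{\ov{p}}(U)$ is a module over $IN^0_{\ov{0}}(U)$, the restriction of cochains to the vertices of the regular part gives a morphism of sheaves of rings $N^0\to IN^0_{\ov{0}}$, the sheaf $N^0$ is soft, and a sheaf of modules over a soft sheaf of rings is soft. Softness yields $H^*(X;\IN^*_{\ov{p}})\cong H^*(\Gamma(X,\IN^*_{\ov{p}}))$, which is precisely the identification your proposal takes for granted. This is not a cosmetic omission: the sole purpose of sheafifying $IN^*_{\bullet}$ is to compare it, in the derived category, with $\IC^*_{\bullet}$ in the next lemma, and that comparison requires the cohomology in the present lemma to be genuine (hyper)cohomology of $X$ with values in $\IN^*_{\bullet}$; some acyclicity argument for the sheaves $\IN^j_{\ov{p}}$ is therefore an unavoidable ingredient of the proof, not a formality that follows from exactness of filtered colimits.
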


\begin{proof}
For any $\cU\in \cov(X)$, there is a restriction map,
 $r_{\cU}\colon IN^*_{\ov{p}}(X)\to \tN^*_{\ov{p}}(\ob{\rm ISing}^{\cF,\cU}(X))$,
 compatible with the inclusions of open covers. This gives the morphism,
 $$IN^*_{\ov{p}}(X)\to 
\Gamma(X,\IN^*_{\ov{p}}):= \lim_{\cU\in \cov(X)}\tN_{\ov{p}}^*(\ob{{\rm ISing}}^{\cF,\cU}(X)),$$
 induced by the canonical map $IN^*_{\bullet}\to \IN^*_{\bullet}$.
By taking the direct limit of the quasi-isomorphisms of \lemref{lem:Upetits}, we get an isomorphism
$$H^*\left(\lim_{\cU\in \cov(U)}\tN_{\ov{p}}^*(\ob{\rm ISing}^{\cF,\cU}(U))\right)
\cong
H^*\left(\tN_{\ov{p}}^*(\ob{\rm ISing}^{\cF}(U))\right)
=H^*_{\TW,\ov{p}}(U).
$$

In a second step, by following the lines of \cite[Exemple 3.9.1.]{MR0345092}, we prove that the sheaf, $\IN^*_{\ov{p}}$, is soft. 
The elements of $IN^0_{\ov{0}}(U)$ are $\ov{0}$-admissible vertices; they are the vertices of  the regular part and the map
$N^0(U)\to IN^0_{\ov{0}}(U)$ can be considered as the restriction to the regular part. Also, in this degree~0, the presheaves $N^0$ and $IN^0_{\ov{0}}$ are clearly sheaves and  $N^0(U)\to IN^0_{\ov{0}}(U)$, is a
 morphism of sheaves of rings. 
Observe also that $IN^*_{\ov{p}}(U)$ is an $IN^0_{\ov{0}}(U)$-module for the cup-product. 
 As the sheaf 
 $N^0$ is soft, and as (see \cite[Th\'eor\`eme 3.7.1.]{MR0345092}) any sheaf of modules over a soft sheaf of rings is soft, we deduce the softness of  $\IN^*_{\ov{p}}$. Thus, the hypercohomology is the cohomology of the space of sections of the sheaf and we  get a series of isomorphisms,
 $$H^*(X;\IN^*_{\ov{p}})\cong H^*(\Gamma(X,\IN^*_{\ov{p}}))\cong 
 H^*(\tN_{\ov{p}}^*(\ob{\rm ISing}^{\cF}(X)))=H^*_{\TW,\ov{p}}(X).$$
By definition of the ${\rm cup}_{i}$-products on $\IN^*_{\bullet}$, the following diagram commutes,
$$\xymatrix{
IN^r_{\ov{p}}(X)\otimes IN^s_{\ov{q}}(X)
\ar[r]^-{\cup_{i}}\ar[d]_{\simeq}&
IN^{r+s-i}_{\ov{p}\oplus\ov{q}}(X)\ar[d]^{\simeq}\\
\Gamma(X,\IN^r_{\ov{p}})\otimes \Gamma(X,\IN^s_{\ov{q}})\ar[r]^-{\cup_{i}}&
\Gamma(X,\IN^{r+s-i}_{\ov{p}\oplus\ov{q}}).
}$$
With the properties already established, the vertical maps are quasi-isomorphisms induced by the canonical map $IN^*_{\bullet}\to \IN^*_{\bullet}$. The stated result is now a consequence of the definition of Steenrod squares from  ${\rm cup}_{i}$-products.
\end{proof}

\begin{lemma}\label{lem:Upetits}
Let $X$ be an $n$-dimensional pseudomanifold  and $\cU$ be an open cover of~$X$.
The canonical inclusion,
$\iota\colon\ob{\rm ISing}^{\cF,\cU}(X)\to \ob{\rm ISing}^{\cF}(X)$,
induces an isomorphism in intersection cohomology, for any GM-perversity $\ov{p}$. 
\end{lemma}

\begin{proof}
With \propref{prop:CandN}, we can replace $\tN^*(-)$ by  the blow-up $\tC^*(-)$, already studied in \cite{2012arXiv1205.7057C}.
Let $\ov{q}$ be the GM-perversity defined by $\ov{p}(k)+\ov{q}(k)=k-2$. 
Recall from 
\cite[Theorem~B]{2012arXiv1205.7057C},
 the existence of a quasi-isomorphism,
$\ev\colon \tC^*_{\ov{p}}(\ob{K})
\to
\hom(C_{*}^{\GM,\ov{q}}(\ob{K}),\F_{2})$,
defined for any \ffs, $\ob{K}$, as follows:\\
for any $\Phi\in \tC^*_{\ov{p}}(\ob{K})$, 
$\sigma\colon \Delta^{j_{0}}\ast\cdots\ast\Delta^{j_{n}}\to \ob{K}$,
we have $\Phi_{\sigma}=\sum_{j}\Phi_{0,\sigma,j}\otimes\cdots\otimes \Phi_{n,\sigma,j}\in
C^*(c\Delta^{j_{0}})\otimes\cdots\otimes C^*(\Delta^{j_{n}})$
and we set
$$\ev (\Phi)(\sigma)=\sum_{j}
\Phi_{0,\sigma,j}([c\Delta^{j_{0}}])\cdot\ldots\cdot
\Phi_{n,\sigma,j}([\Delta^{j_{n}}]),$$
where $[-]$ is the maximal simplex. 
By using it for $\ob{K}=\ob{\rm ISing}^{\cF}(X)$ and $\ob{K}=\ob{\rm ISing}^{\cF,\cU}(X)$, we get the following diagram, whose commutativity follows directly from the 
definitions of maps,
$$\xymatrix{
H^*_{\TW,\ov{p}}(\ob{\rm ISing}^{\cF}(X))\ar[r]^-{\ev^*}\ar[d]_{\iota^*_{\TW}}&
 H^*_{\GM,\ov{q}}(\ob{\rm ISing}^{\cF}(X))\ar[d]^{\iota^*_{\GM}}\\
 H^*_{\TW,\ov{p}}(\ob{\rm ISing}^{\cF,\cU}(X))\ar[r]^-{\ev^*}&
 H^*_{\GM,\ov{q}}(\ob{\rm ISing}^{\cF,\cU}(X)).
}$$
We know that the two evaluation maps, $\ev^*$, are quasi-isomorphisms and
 we have to prove that the map, $\iota^*_{\TW}$, induced by the inclusion, $\iota$, is an isomorphism. 
 With the commutativity of the previous diagram, and the fact that the homology is over a field, it is sufficient to prove that 
 $$\iota_{\GM,*}\colon  
 H_*^{\GM,\ov{q}}(\ob{\rm ISing}^{\cF,\cU}(X))\to
 H_*^{\GM,\ov{q}}(\ob{\rm ISing}^{\cF}(X))
 $$
  is an isomorphism. Set $C_*^{\ov{q}}(X)=C_{*}^{\GM,\ov{q}}(\ob{\rm ISing}^{\cF}(X))$. Recall from \cite[Lemma~A.16]{2012arXiv1205.7057C}, the existence of 
  a chain map, which is the classical subdivision,
  $\sd\colon C_*^{\ov{q}}(X)\to C_*^{\ov{q}}(X)$, and, for any integer $m$, the existence of a homomorphism,  $T\colon C_*^{\ov{q}}(X)\to C_{*+1}^{\ov{q}}(X)$, such that $\partial T+T \partial=\id -\sd^m$.  
By construction, for any element $c\in C_*^{\ov{q}}(X)$, there is an integer $m$ such that $\sd^m c\in C_{*}^{\GM,\ov{q}}(\ob{\rm ISing}^{\cF,\cU}(X))$.
Moreover, if $c\in C_{*}^{\GM,\ov{q}}(\ob{\rm ISing}^{\cF,\cU}(X))$ then $Tc\in C_{*}^{\GM,\ov{q}}(\ob{\rm ISing}^{\cF,\cU}(X))$. Also, if $c$ is a cycle, $\sd^m c$ is a cycle also and the two homology classes $[c]$ and $[\sd^m c]$ are equal. This implies the surjectivity and the injectivity of $\iota_{\GM,*}$ through a classical argument.
\end{proof}
 
 The second step in the proof of \thmref{thm:Goreskyandblowup} is the comparison of the two definitions of Steenrod squares, respectively associated  to the sheaf $\IN^*_{\bullet}$ and to the Deligne sheaf $\ds^*_{\bullet}$. This is a consequence of the comparison of the two associated ${\rm cup}_{i}$-products, done in the next lemma.
 
 \begin{lemma}\label{lem:goresky=}
Let $X$ be an $n$-dimensional topological pseudomanifold and 
 let $\ov{p}$, $\ov{q}$ be two GM-perversities, such that $\ov{p}\oplus \ov{q}\leq\ov{t}$, 
 where $\ov{p}\oplus\ov{q}$ is the smallest GM-perversity, $\ov{r}$, such that $\ov{p}+\ov{q}\leq \ov{r}$.
Then, for any $i$, there is a commutative square in the derived category of sheaves on $X$, 
linking the two $\text{cup}_i$-products,
$$\xymatrix@C=2cm{\IN^*_{\ov{p}}(X)
\otimes \IN^*_{\ov{q}}(X)
\ar[r]^-{\cup_{i}}\ar@{~>}[d]&
\IN^*_{\ov{p}\oplus\ov{q}}(X)
\ar@{~>}[d]\\
\ds^*_{\ov{p}}(X)\otimes \ds^*_{\ov{q}}(X)\ar[r]^-{\cup_{i}}&
\ds^*_{\ov{p}\oplus\ov{q}}(X),
}$$
and such that vertical arrows are isomorphisms.
 \end{lemma}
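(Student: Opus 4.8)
The plan is to prove \lemref{lem:goresky=} by exhibiting a single isomorphism of sheaves in the derived category, $\IN^*_{\ov{p}}\cong \IC^*_{\ov{p}}$, that is multiplicative, i.e., compatible with the $\text{cup}_i$-products on both sides. Since the vertical arrows in the claimed square are built from this isomorphism, the content of the statement is that the isomorphism intertwines the two $\text{cup}_i$-product structures. The strategy rests on Goresky's axiomatic characterization: the Deligne sheaf $\IC^*_{\ov{p}}$ is characterized up to quasi-isomorphism in the derived category by a list of support, attachment, and normalization (truncation) axioms, and a sheaf satisfying these axioms carries a \emph{unique} system of $\text{cup}_i$-products (and hence Steenrod squares) agreeing with Goresky's. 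So the proof has two halves: first, identify $\IN^*_{\ov{p}}$ with the Deligne sheaf as objects of the derived category; second, invoke uniqueness to conclude the square commutes.

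First I would verify that $\IN^*_{\ov{p}}$ satisfies the Deligne axioms. The normalization on the regular part $X_n\setminus X_{n-2}$ follows because there the \ffs~reduces to ordinary singular cochains and $\tN^*$ reduces to $N^*$, so $\IN^*_{\ov{0}}$ restricts to the constant sheaf $\F_2$. The support/attachment conditions require a local computation of the stalk cohomology at a point $x\in X_i\setminus X_{i-1}$: using the local conic structure $\varphi\colon U\times\mathring{c}L\to V$ from \defref{def:pseudo}, one computes $H^*(\IN^*_{\ov{p}})_x$ via the cone formula for the blow-up cochains. Here the whole point of the blow-up construction pays off, since $\tN^*$ of a cone is governed precisely by the perverse-degree truncation, reproducing the Deligne truncation $\tau_{\leq \ov{p}(k)}$ on each stratum. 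These computations are essentially the content already established in \cite{2012arXiv1205.7057C}, together with \lemref{lem:prefaisceaucup}, which gives softness of $\IN^*_{\ov{p}}$ and hence identifies hypercohomology with global sections; I would cite those results rather than redo them.

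Next, with $\IN^*_{\ov{p}}\cong \IC^*_{\ov{p}}$ established as an isomorphism in the derived category, I would invoke Goresky's uniqueness theorem for Steenrod operations on an injective (or soft) resolution of the Deligne sheaf, stated in \cite{MR761809}. Goresky's construction defines the $\text{cup}_i$-products, and therefore $\sqg^i$, by a Steenrod-type diagonal approximation on an injective resolution, and shows this is independent of the chosen resolution satisfying the axioms. Since our $\IN^*_{\bullet}$ is a soft (hence $\Gamma$-acyclic) multiplicative resolution carrying its own $\text{cup}_i$-products coming from the perverse $\cE(2)$-algebra of \thmref{thm:NXE2}, the axiomatic uniqueness forces the square to commute up to the derived-category isomorphism: both $\text{cup}_i$-products are, after identification, the unique operation compatible with the multiplicative structure on the regular part. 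This is where I would take care that the isomorphism $\IN^*_{\ov{p}}\cong\IC^*_{\ov{p}}$ is genuinely an isomorphism of \emph{multiplicative} objects, not merely additive ones.

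The main obstacle I anticipate is precisely the multiplicativity of the comparison isomorphism. Checking that $\IN^*_{\ov{p}}\cong\IC^*_{\ov{p}}$ as objects in the derived category is the standard Deligne-axiom verification, but ensuring that this isomorphism respects the $\text{cup}_i$-product structure—rather than just the cup-product, or just the additive structure—is the delicate point, because Goresky's $\text{cup}_i$-products live only in the derived category and are pinned down only through the uniqueness theorem. The cleanest route is to reduce everything to the regular part: both families of $\text{cup}_i$-products restrict on $X_n\setminus X_{n-2}$ to the classical singular $\text{cup}_i$-products on $\F_2$, and by the attachment axioms any derived-category morphism extending this restriction is unique, so the two structures must agree globally. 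Carrying out this reduction carefully, keeping track of the perversity shift $\ov{p}\oplus\ov{q}\leq\ov{t}$ throughout so that the products land in the correct truncation, is the step that needs the most attention.
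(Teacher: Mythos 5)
Your proposal follows essentially the same route as the paper's own proof: verify Deligne's axioms (AX1) for $\IN^*_{\bullet}$ (deferring, as the paper also does, to the results of \cite{2012arXiv1205.7057C} and \lemref{lem:prefaisceaucup}), deduce an isomorphism with $\IC^*_{\bullet}$ in the derived category from the axiomatic characterization, and then invoke Goresky's uniqueness theorem for $\text{cup}_i$-products \cite[Proposition 3.6]{MR761809} applied to a common injective resolution. The multiplicativity issue you flag is handled in the paper exactly the way you suggest: Goresky's uniqueness statement is precisely the assertion that any two $\text{cup}_i$-structures agreeing with the classical one on the regular part coincide, so no separate multiplicative compatibility of the comparison isomorphism needs to be checked.
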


\begin{proof}
Let $\bSS^*$ be a differential graded sheaf on the pseudomanifold $X$. We denote by $\bSS^*_{k}$ the restriction of $\bSS^*$ to the open set $X\backslash X_{n-k}$, for $k\in \{2,\ldots, n+1\}$. Recall the conditions (AX1) of \cite[V.2.3]{MR2401086}:
\begin{enumerate}[(a)]
\item $\bSS^*$ is bounded, $\bSS^i=0$ for $i<0$ and $\bSS^*_{2}$ is quasi-isomorphic to the ordinary singular cohomology.
\item For any $k\in \{2,\ldots, n\}$ and any $x\in X_{n-k}\backslash X_{n-k-1}$, we have $\cH^i(\bSS)_{x}=0$ if $i> \ov{p}(k)$. 
\item The attachment map, $\alpha_{k}\colon \bSS^*_{k+1}\to Ri_{k^*}\bSS^*_{k}$,
induced by the canonical inclusion
$X\backslash X_{n-k}\to X\backslash X_{n-k-1}$,
is a quasi-isomorphism up to $\ov{p}(k)$.
\end{enumerate}
If $\bSS^*$ is soft, from \cite[Remark 2.3.]{MR2168981}, we may replace condition (c) by the following equivalent one:\\
(c$'$) for any $k\in\{2,\ldots,n\}$, $j\leq \ov{p}(k)$ and $x\in X_{n-k}\backslash X_{n-k-1}$, 
the restriction map induces an isomorphism,
$${\varinjlim}_{U_{x}}H^j(\Gamma(U_{x};\bSS^*))\xrightarrow[]{\cong}
{\varinjlim}_{U_{x}} H^j(\Gamma(U_{x}\backslash X_{n-k};\bSS^*)),
$$
where $U_{x}$ varies into a cofinal family of neighborhoods of $x$ in $X\backslash X_{n-k-1}$. 

\smallskip
On the regular part, the sheaf $\IN^*_{\bullet}$ is the sheafification of $N^*$ and thus computes the singular cohomology.
Therefore, condition (a) is satisfied for $\IN^*$. 
In order to prove that the sheaf $\IN^*_{\bullet}$ satisfies the axioms (b) and (c$'$), we use the isomorphism established in \lemref{lem:prefaisceaucup},
$$H^*(X;\IN_{\bullet})\cong H^*_{\TW,\bullet}(X).$$
Let $x\in X_{n-k}\backslash X_{n-k-1}$. 
The cohomology $\cH^*(\IN_{\bullet})_{x}$ is determined by the following isomorphisms,
$$\cH^*(\IN_{\bullet})_{x}={\varinjlim}_{U_{x}}
H^*(\Gamma(U_{x};\IN_{\bullet}))\cong {\varinjlim}_{U_{x}}H^*_{\TW,\bullet}(U_{x}),$$
where the direct limits are taken over the open neighborhoods $U_{x}$ of $x$. 
(The first equality is the definition of the stalk at a point.)
Moreover, these limits can also be obtained from a restriction to a cofinal family of trivializing open neighborhoods, 
$U_{x}\cong \R^{n-k}\times cL$, where $L$ is the link of $x$. Axiom (b) follows now from 
$H^*_{\TW,\ov{p}}(\R^{n-k}\times cL)=H^*_{\TW,\ov{p}}(cL)=0$, if $*>\ov{p}(k)$, 
see \cite[Corollary 1.47]{2012arXiv1205.7057C}.

The verification of (c$'$) is quite similar. As noticed in \cite[Proof of Theorem 7.1.]{MR2168981}, 
we are reduce to analyze the map,
$${\varinjlim}_{U_{x}}H^*(U_{x};\IN_{\bullet})\to {\varinjlim}_{U_{x}}H^*(U_{x}\backslash X_{n-k};\IN_{\bullet}),$$
where the direct limit is taken over a cofinal family  of  trivializing open neighborhoods of~$x$,
$U_{x}\cong \R^{n-k}\times cL$. We consider the following commutative diagram, whose horizontal maps are
induced by the canonical inclusions and vertical maps are isomorphisms,
$$\xymatrix{
H^*(U_{x};\IN_{\bullet})\ar[r]\ar[d]_{\cong}
&
H^*(U_{x}\backslash X_{n-k};\IN_{\bullet})\ar[d]_{\cong}
&
\\
H^*(\R^{n-k}\times cL;\IN_{\bullet})\ar[r]\ar[d]_{\cong}
&
H^*(\R^{n-k}\times (cL-\{\tv\});\IN_{\bullet})\ar[d]_{\cong}
&
\\
H^*(cL;\IN_{\bullet})\ar[r]
&
H^*(cL-\{\tv\};\IN_{\bullet})\ar[r]^-{\cong}
&
H^*(L;\IN_{\bullet}).
}$$
Finally, we note that the composite at the bottom is an isomorphism when $*\leq \ov{p}(k)$, as shows the
classical computation of the intersection cohomology of a cone. Modulo the vertical isomorphisms, this is exactly the axiom (c$'$).

Therefore, the sheaf $\IN^*_{\bullet}$ satisfies conditions (AX1) and, by \cite[Theorem 2.5]{MR2401086}, 
there exists a quasi-isomorphism between $\IN^*_{\bullet}$ and $\ds^*_{\bullet}$ (see also \cite{MR696691}). 
As a consequence, 
these two sheaves have a common injective resolution and we may apply to it the uniqueness of 
${\rm cup}_{i}$-products established by M.~Goresky in \cite[Proposition 3.6]{MR761809}.
\end{proof}

From the previous results on ${\rm cup}_{i}$-products, we get
an isomorphism of algebras of cohomology, with coefficients in $\F_{2}$.  

\begin{corollary}\label{cor:cupproducts}
If $X$ is an $n$-dimensional pseudomanifold, there are isomorphisms of perverse algebras,
$$H^*_{\TW,\bullet}(X)
\cong
H^*(X;\IN_{\bullet})
\cong
H^*(X;\ds_{\bullet}).
$$
Moreover, if $X$ is compact and PL, one has also an isomorphism of algebras,
$$H^*(X;\ds_{\bullet})\cong
H_{n-*}^{\ov{t}-{\bullet}}(X;\F_{2}),$$
with  the intersection product on the last term.
\end{corollary}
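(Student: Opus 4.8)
The plan is to read the first chain of isomorphisms directly off \lemref{lem:prefaisceaucup} and \lemref{lem:goresky=}, and then to obtain the duality statement by transporting the classical Goresky--MacPherson theorem through this identification.

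For the first chain I would argue perversity by perversity. Specializing \lemref{lem:prefaisceaucup} to $i=0$ produces, for each loose perversity $\ov{p}$, a ring isomorphism $H^*_{\ov{p}}(\ob{\rm ISing}^{\cF}(X);\F_{2})\cong H^*(X;\IN_{\ov{p}})$ intertwining the cup products, since the vertical quasi-isomorphisms there are induced by the canonical comparison $IN^*_{\bullet}\to\IN^*_{\bullet}$ of differential graded algebras. Likewise \lemref{lem:goresky=} with $i=0$ yields an isomorphism $H^*(X;\IN_{\ov{p}})\cong H^*(X;\IC_{\ov{p}})$ in the derived category compatible with cup products. Composing gives the asserted ring isomorphisms. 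To upgrade them to isomorphisms of \emph{perverse} algebras it remains to check naturality in the perversity: the structural maps for $\ov{p}\leq\ov{q}$ come from the inclusions $\tN^*_{\ov{p}}\subseteq\tN^*_{\ov{q}}$ (and the analogous sheaf inclusions), with which the comparison maps are plainly compatible, while the product $\cup\colon A_{\ov{p}}\otimes A_{\ov{q}}\to A_{\ov{p}\oplus\ov{q}}$ indexed by the GM-lattice, as in \remref{rem:GMperversities}, is preserved by construction: \lemref{lem:prefaisceaucup} makes the first comparison a morphism of perverse differential graded algebras, and \lemref{lem:goresky=} makes the second compatible with cup products in the derived category.

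For the duality statement I would use that this first chain identifies $H^*_{\ov{\bullet}}(X;\IC)$ with the hypercohomology of the Deligne sheaf, i.e. with the classical Goresky--MacPherson intersection cohomology. Over the field $\F_{2}$, for a compact PL pseudomanifold, Goresky--MacPherson duality (\cite{MR572580}) provides an isomorphism $H^r_{\ov{p}}(X;\F_{2})\cong H_{n-r}^{\ov{t}-\ov{p}}(X;\F_{2})$ pairing complementary perversities, under which the cup product corresponds to the intersection product. Letting $\ov{p}$ range over all GM-perversities then delivers the displayed isomorphism of algebras, with the intersection product on the target.

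The substantive work already lives in \lemref{lem:prefaisceaucup} and \lemref{lem:goresky=}, so the first part is mostly bookkeeping; the point I would treat most carefully, and which I expect to be the main obstacle, is reconciling the two indexing conventions for the product. The $\text{cup}_{i}$-products of \thmref{thm:NXE2} land in the ordinary sum $\ov{p}+\ov{q}$ of loose perversities, whereas the perverse-algebra product on the GM side uses the lattice sum $\ov{p}\oplus\ov{q}$; these must be matched through \remref{rem:GMperversities}, and every comparison map must be seen to respect the match. By contrast the duality part is a direct citation, its only subtlety being the alignment of conventions (cohomological degree $r$ against homological degree $n-r$, and perversity $\ov{p}$ against its complement $\ov{t}-\ov{p}$).
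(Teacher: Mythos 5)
Your treatment of the first chain of isomorphisms is exactly the paper's argument: the paper disposes of it in one sentence, as consequences of the previous results on ${\rm cup}_{i}$-products, i.e.\ of \lemref{lem:prefaisceaucup} and \lemref{lem:goresky=} specialized to $i=0$, and your perversity-by-perversity bookkeeping (including the reconciliation of the sum $\ov{p}+\ov{q}$ of loose perversities with the lattice sum $\ov{p}\oplus\ov{q}$ via \remref{rem:GMperversities}) is a correct elaboration of that sentence.

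The gap is in the duality statement. You cite \cite{MR572580} for an isomorphism $H^r_{\ov{p}}(X;\F_{2})\cong H_{n-r}^{\ov{t}-\ov{p}}(X;\F_{2})$ ``under which the cup product corresponds to the intersection product.'' That compatibility is not in \cite{MR572580}: the original Goresky--MacPherson paper works purely with PL chains, the geometric intersection pairing, and the resulting duality of intersection homology groups; no cup product is defined there, so in particular no comparison is made between the product on $H^*_{\ov{\bullet}}(X;\IC)$ (sheaf-theoretic, or equivalently the ${\rm cup}_{0}$-product of this paper) and the intersection product. That multiplicative compatibility is precisely the nontrivial content of the second displayed isomorphism, and it is exactly what the paper invokes Friedman \cite{MR2529162} for. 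So although your overall structure --- reduce everything to a known multiplicative duality theorem --- coincides with the paper's, the step you describe as ``a direct citation'' is not covered by the source you cite; to close it you need Friedman's theorem (or an independent proof that the duality isomorphism intertwines the cup product with the intersection product), not just the duality of \cite{MR572580}.
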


\begin{proof}
The two first isomorphisms are consequences of the previous results on ${\rm cup}_{i}$-products. The last one is established by G.~Friedman in \cite{MR2529162}.
\end{proof}

If we are interested only by the cup-product, $\cup_{0}$, we may consider versions of the sheaves, $\IN$ and $\ds$, over any field.
In this case, the previous corollary is still true  for any field and not only for $\F_{2}$. With more work of this type, one should be able also to show the existence of an isomorphism between our definition of cup-product and the definition of G. Friedman and J. E. McClure (\cite{MR3046315}).

\section{Pseudomanifolds with  isolated singularities}\label{sec:isolated}

In this section, we determine Steenrod squares on the intersection cohomology of 
pseudomanifolds with isolated singularities. 
In this case, if the pseudomanifold is of dimension~$n$, the perversity $\ov{p}$ is determined by one number, $\ov{p}(n)$. 
Recall now that the intersection cohomology of a cone $cY$ on a space $Y$ is given by
$H^r_{\TW,\ov{p}}(cY)= H^r(Y)$, if $r\leq \ov{p}(n)$ and 0 otherwise.

\begin{proposition}\label{prop:isolated}
Let $\ov{p}$ be a GM-perversity and $X$ be an $n$-dimensional pseudomanifold obtained from a triangulated manifold with boundary, 
$(W,\partial W)$, by attaching cones on the connected components, $(\partial_{u}W)_{u\in I}$, of $\partial W$, i.e., 
$X$ is the push out
$$\xymatrix{
\partial W=\sqcup_{u\in I}\partial_{u} W\ar[r]^-{\iota}\ar[d]&W\ar[d]\\
\sqcup_{u\in I} c(\partial_{u} W)\ar[r]&X.
}$$
We filter the pseudomanifold $X$ by $\emptyset\subset \{\tv_{u}\mid u\in I\}\subset X$, where $\tv_{u}$ is the cone point of $c(\partial_{u} W)$.
Then, the following properties are satisfied.
\begin{enumerate}[(i)]
\item The cochain complex, $\tN^*_{\ov{p}}(X)$, is quasi-isomorphic to the pullback in the category of 
cochain complexes, 
$N^*(W)\oplus_{N^*(\partial W)}\tau_{\leq\ov{p}(n)}N^*(\partial W)$, 
where $\tau_{\leq\ov{p}(n)}N^*(\partial W)$ is the usual truncation (see \cite[Page~52]{MR2401086}),
$$
(\tau_{\leq \ov{p}(n)}N^*(\partial W))^r=
\left\{
\begin{array}{ccl}
N^r(\partial W)&\text{if}&r<\ov{p}(n),\\
\cZ N^{\ov{p}(n)}(\partial W)&\text{if}&r=\ov{p}(n),\\
0&\text{if}&r>\ov{p}(n),
\end{array}\right.
$$
in which $\cZ$ denotes the vector space of cocycles.
Moreover, the GM-perverse $\cE(2)$-algebra, $\ov{p}\mapsto \tN^*_{\ov{p}}(X)$,
is quasi-isomorphic to the pullback in the category of GM-perverse $\cE(2)$-algebras, defined by
$\ov{p}\mapsto N^*(W)\oplus_{N^*(\partial W)}\tau_{\leq\ov{p}(n)}N^*(\partial W)$,
with the $\cE(2)$-algebra structure on $N^*(-)$ defined in \cite{MR2075046}.
\item The intersection cohomology of $X$ is determined by
$$H^k_{\TW,\ov{p}}(X)=\left\{\begin{array}{lcl}
H^k(W)
&\text{if}&
k\leq \ov{p}(n),\\
\ker (H^k(W)\to H^k(\partial W))%
&\text{if}&
k=\ov{p}(n)+1,\\
H^k(W,\partial W)
&\text{if}&
k>\ov{p}(n)+1.
\end{array}\right.$$
\item If $(\alpha,\iota^*\alpha)\in N^*(W)\oplus_{N^*(\partial W)}\tau_{\leq\ov{p}(n)}N^*(\partial W)$ is a cocycle of $\ov{p}$-intersection and  $i$ is a positive integer, we have
$$\sq^i(\alpha,\iota^*\alpha)=(\sq^i\alpha, \iota^*\sq^i\alpha)\in H^{*+i}_{\TW,\cL(\ov{p},i)}(X).$$
\end{enumerate}
\end{proposition}

\begin{proof}
(i) Starting from a triangulation of $(W,\partial W)$, we may suppose that $X$, $W$, $\partial W$ and
 $\sqcup_{u\in I}c(\partial_{u}W)$ are triangulated in such a way that any simplex of the triangulation of $X$ is filtered, 
 for the filtration $\emptyset\subset \{\tv_{u}\mid u\in I\}\subset X$.

Let $Y$ be one of the spaces above and $Y^\tau$ be the associated triangulated space. 
In \cite[Chapter 3 and Chapter 5]{IHGreg}, G. Friedman proves that the cochains 
$C^*_{\GM,\ov{p}}(Y)$ and $C^*_{\GM,\ov{p}}(Y^\tau)$ are quasi-isomorphic for any GM-perversity $\ov{p}$.
Let $\ov{p}$ and $\ov{q}$ be two GM-perversities such that $\ov{p}(k)+\ov{q}(k)=k-2$. There exists a quasi-isomorphism between 
$C^*_{\GM,\ov{q}}(Y)$ and $\tC^*_{\ov{p}}({Y})$, see \cite[Theorem B]{2012arXiv1205.7057C} or \propref{prop:GMetTW}.
Recall also from \propref{prop:CandN} the existence of a quasi-isomorphism between
$\tC^*_{\ov{p}}({Y})$
and
$\tN^*_{\ov{p}}({Y})$.
Thus, the isomorphism,
$$C^*_{\GM,\ov{q}}(X^\tau)\cong C^*(W^\tau)\oplus_{(\oplus_{u\in I}C^*(\partial_{u}W^\tau))}(\oplus_{u\in I}C^*_{\GM,\ov{q}}(c(\partial_{u}W)^\tau)),$$
obtained by construction of the triangulations, gives quasi-isomorphisms,
\begin{eqnarray*}
C^*_{\GM,\ov{q}}(X)
&\simeq &
C^*(W)\oplus_{C^*(\partial W)}(\oplus_{u\in I}C^*_{\GM,\ov{q}}(c(\partial_{u}W)))\\
&\simeq&
C^*(W)\oplus_{C^*(\partial W)}(\oplus_{u\in I}\;\tau_{\leq \ov{t}(n)-\ov{q}(n)}C^*(\partial_{u}W))\\
&\simeq&
C^*(W)\oplus_{C^*(\partial W)} \tau_{\leq \ov{t}(n)-\ov{q}(n)}C^*(\partial W)\\
&\simeq&
N^*(W)\oplus_{N^*(\partial W)} \tau_{\leq \ov{t}(n)-\ov{q}(n)}N^*(\partial W).
\end{eqnarray*}
Therefore
we have obtained a quasi-isomorphism,
$$\tN^*_{\ov{p}}(X)\simeq
N^*(W)\oplus_{N^*(\partial W)} \tau_{\leq \ov{p}(n)}N^*(\partial W).
$$

We investigate now the structure of $\cE(2)$-algebra. 
In \cite{MR2075046}, C. Berger and B. Fresse prove that a restriction map,
$N^*(Y)\to N^*(Z)$,
induced by an inclusion $Z \hookrightarrow Y$,
is a morphism of $\cE(2)$-algebras.
Therefore, we obtain functors from the lattice of GM-perversities (and $\ov{\infty}$) to
GM-perverse $\cE(2)$-algebras, defined by
$\ov{p}\mapsto \tN^*_{\ov{p}}(X)$,
$\ov{p}\mapsto N^*(W)$,
$\ov{p}\mapsto N^*(\partial W)$,
$\ov{p}\mapsto \tau_{\leq \ov{p}(n)}N^*(\partial W)$.
Restriction maps define GM-perverse $\cE(2)$-algebra maps between $\tN^*_{\ov{p}}(X)$ and the
three other GM-perverse $\cE(2)$-algebras. From them, we obtain a GM-perverse $\cE(2)$-algebra map
\begin{equation}\label{equa:modelisolated}
\tN^*_{\ov{p}}(X)\to
N^*(W)\oplus_{N^*(\partial W)} \tau_{\leq \ov{p}(n)}N^*(\partial W),
\end{equation}
whose codomain is a pullback in the category of GM-perverse $\cE(2)$-algebras, see \cite{MR2075046}. We have proved above that
this last map is a quasi-isomorphism for each $\ov{p}$ and the first item of the statement is established.

\medskip
(ii) An element of the previous sum is of the type $(\alpha,\iota^*\alpha)$, with $\iota^*\alpha$ of degree less than, or equal to, $\ov{p}(n)$. This means that, if  $\alpha$  is of degree $k$, we must have
$$\left\{
\begin{array}{lcl}
\iota^*\alpha=0
&\text{if}&
k>\ov{p}(n),\\
\iota^*\alpha \;\text{is a cocycle}
&\text{if}&
k=\ov{p}(n),\\
\text{no condition}
&\text{if}&
k<\ov{p}(n).
\end{array}\right.$$
This implies immediately $H^k_{\TW,\ov{p}}(X)=H^k(W)$ if $k\leq \ov{p}(n)$ and that $H^k_{\TW,\ov{p}}(X)=H^k(W,\partial W)$ if $k>\ov{p}(n)+1$. In degree $k=\ov{p}(n)+1$, the $\ov{p}$-intersection cohomology of $X$ is formed of the elements of $H^k(W)$ which are in the image of $H^k(W,\partial W)$, i.e., the kernel of $H^k(W)\to H^k(\partial W)$.

\medskip
(iii) The quasi-isomorphisms between 
$\tN^*_{\ov{p}}(X)$ and $N^*(W)\oplus_{N^*(\partial W)}\tau_{\leq\ov{p}(n)}N^*(\partial W)$
 defining a map of GM-perverse $\cE(2)$-algebras, they are
compatible with the ${\rm cup}_{i}$-products, and the right-hand complex of (\ref{equa:modelisolated}) can be used for the determination of ${\rm cup}_{i}$-products, i.e., we have
$$(\alpha,\iota^*\alpha)\cup_{i}(\beta,\iota^*\beta)=(\alpha\cup_{i}\beta,\iota^*\alpha\cup_{i}\iota^*\beta),$$
from which we deduce the announced formula for Steenrod squares.
\end{proof}

\begin{remark}\label{rem:conjectureisolated}
\emph{This remark gives a direct proof of the Goresky-Pardon conjecture in the case of isolated singularities.} 
Let $(\alpha,\iota^*\alpha)$ be a  cocycle in $N^k(W)\oplus_{N^k(\partial W)}\tau_{\leq\ov{p}(n)}N^k(\partial W)$.
The perverse degree of the Steenrod square,
$\sq^j(\alpha,\iota^*\alpha)=(\alpha,\iota^*\alpha)\cup_{k-j}(\alpha,\iota^*\alpha)$, verifies
$$\|(\alpha,\iota^*\alpha)\cup_{k-j}(\alpha,\iota^*\alpha)\|
\leq_{(1)}
|\iota^*\alpha\cup_{k-j}\iota^*\alpha|
 \leq_{(2)}
 k+j
 \leq_{(3)}
  \ov{p}(n)+j,$$
  where
  \begin{itemize}
\item $\leq_{(1)}$ comes from the fact that the perverse degree of a cochain is less than, or equal, to its usual degree,
\item $\leq_{(2)}$ is a consequence of $|a\cup_{i}b|\leq |a|+|b|-i$,
\item $\leq_{(3)}$ uses $\iota^*\alpha=0$ if $k>\ov{p}(n)$.
\end{itemize}
\end{remark}

\begin{remark}\label{rem:coherenceofSq}
The fact that the image of $H^*_{\TW,\ov{p}}(X)$ by $\sq^i$ is in perversity 
$\cL(\ov{p},i)=\min(2\ov{p},\ov{p}+i)$
 is perfectly in phase with the characterization of the intersection cohomology of $X$, 
 written in \propref{prop:isolated}.{\it (ii)}. 
 This remark follows from the next observations for a cocycle $(\alpha,\iota^*\alpha)\in N^k(W)\oplus_{N^k(\partial W)}\tau_{\leq\ov{p}(n)}N^k(\partial W)$.
\begin{itemize}
\item If $k\leq \ov{p}(n)$, then, by definition of the Steenrod squares in $H^*(W)$, we have
$|\sq^i(\alpha)|= k+i\leq \ov{p}(n)+i$.
\begin{itemize}
\item If $i\leq \ov{p}(n)$, this implies
$|\sq^i(\alpha,\iota^*\alpha)|\leq \cL(\ov{p},i)(n)$.
\item If $i> \ov{p}(n)$, we have $\sq^i(\alpha,\iota^*\alpha)=(\sq^i\alpha,\iota^*\sq^i\alpha)=0$.
\end{itemize}
\item If $k>\ov{p}(n)$, then $\iota^*\alpha=0$ and 
$|\sq^i(\alpha,\iota^*\alpha)|=k+i> \ov{p}(n)+i\geq \cL(\ov{p},i)(n)$. 
\end{itemize}
In conclusion, $\sq^i$ respects the c\ae sur\ae~in the determination of the perverse cohomologies, $H^*_{\TW,\ov{p}}(X)$ and $H^*_{\TW,\cL(\ov{p},i)}(X)$.
Moreover, in degrees $k\leq \ov{p}(n)$, \emph{the Steenrod squares on $H^k_{\TW,\ov{p}}(X)$
 coincide with the Steenrod squares on $H^k(W)$.}
\end{remark}

\begin{example}[\emph{Steenrod squares on the intersection cohomology of the suspension of a manifold}]
Let $X$ be an $(n-1)$-dimensional manifold and $\ov{p}$ be a GM-perversity. 
The following pushout defines $\Sigma X$, as in \propref{prop:isolated}, 
$$\xymatrix{
X_{-1}\sqcup X_{1}\ar[r]^{\iota_{-1}\sqcup \iota_{1}}\ar[d]&
X\times [-1,1]\ar[d]\\
cX_{-1}\sqcup cX_{1}\ar[r]&
\Sigma X,
}$$
where 
$\iota_{1}\colon X_{1}=X\times \{1\}\to X\times [-1,1]$
and
$\iota_{-1}\colon X_{-1}=X\times \{-1\}\to X\times [-1,1]$
are the canonical injections.
From (i) of \propref{prop:isolated}, we know that $\tN^*_{\ov{p}}(\Sigma X)$ is quasi-isomorphic to 
the cochain complex
\begin{equation}\label{equa:suspension}
\begin{array}{lcl}
N^*(X\times [-1,1])^{<\ov{p}(n)}
&\oplus&
\{\alpha\in N^{\ov{p}(n)}(X\times [-1,1])\mid d\iota_{1}^*(\alpha)=d\iota_{-1}^*(\alpha)=0\}\\
&\oplus&
(\ker\iota_{1}^*\cap\ker\iota_{-1}^*)^{>\ov{p}(n)},
\end{array}
\end{equation}
in which the superscript refers to the degree.
For instance, $(A)^{<k}$ is the set of elements of $A$ of degree less than $k$.

The suspension, $\Sigma X$, can also be obtained as a cofiber,
$X_{1}\sqcup X_{-1}\to X\times [-1,1]\to \Sigma X$,
which gives a short exact sequence,
$$0\rightarrow
(\ker\iota_{1}^*\cap\ker\iota_{-1}^*)\hookrightarrow
N^*(X\times [-1,1])\xrightarrow[]{(\iota_{1}^*,\iota_{-1}^*)}
N^*(X_{1})\oplus N^*(X_{-1})\rightarrow 0.$$
The morphism of $\cE(2)$-algebras (\cite{MR2075046}),
$N^*(\Sigma X)\to N^*(X\times [-1,1])$,
lifts  as a quasi-isomorphism of cochain complexes,
$N^*(\Sigma X)\to (\ker\iota_{1}^*\cap\ker\iota_{-1}^*)$. 
From (\ref{equa:suspension}) and the previous observation, we deduce the intersection cohomology of the suspension $\Sigma X$, as
$$
H^k_{\TW,\ov{p}}(\Sigma X)=\left\{
\begin{array}{cl}
H^k(X)&
\text{if } k\leq \ov{p}(n),\\
0&
\text{if } k= \ov{p}(n)+1,\\
H^k(\Sigma X)=H^{k-1}(X)&
\text{if } k> \ov{p}(n)+1.
\end{array}\right.
$$
With \remref{rem:coherenceofSq}, we know that, \emph{in degrees $k\leq \ov{p}(n)$, the Steenrod squares on 
$H^k_{\TW,\ov{p}}(\Sigma X)$ coincide with the Steenrod squares on $H^k(X)$.}
Moreover, the intersection of kernels being endowed with the induced structure of $\cE(2)$-algebra of $N^*(X\times [-1,1])$,
the quasi-isomorphism
$N^*(\Sigma X)\to (\ker\iota_{1}^*\cap\ker\iota_{-1}^*)$
 is a morphism of $\cE(2)$-algebras, see \cite{MR2075046}.
Thus, \emph{in degrees $k>\ov{p}(n)+1$, the Steenrod squares
on $H^k_{\TW,\ov{p}}(\Sigma X)$ coincide with the Steenrod squares on $H^k(\Sigma X)$,} 
which are the suspensions of the Steenrod squares on $X$.
\end{example}

We consider now the case of the Thom space of a vector bundle, $\R^m\to E\to B$.

\begin{example}[\emph{Steenrod squares on the intersection cohomology of a Thom space}]
Let $\R^m\to D_{E}\xrightarrow[]{g} B$ be the disk-bundle of associated sphere-bundle
$S^{m-1}\to S_{E}\xrightarrow[]{f} B$. 
The \emph{Thom space,} $\Th(E)$, is built from the disk-bundle along the process described in \propref{prop:isolated}.
 We filter $\Th(E)$ by the point of compactification. 
 Let $\ov{p}$ be a GM-perversity entirely determined in this case by the number $\ov{p}(n)$ with $n=\dim E$.
 \emph{In this example, we prove that the Steenrod squares on
 $H^*_{\TW,\ov{p}}(\Th(E))$ are entirely determined by the Steenrod squares on the base space
  and the Stiefel-Whitney classes of the bundle.
 }
 
 Denote by $c\in H^m(B)$ the Euler class and by $\theta\in H^m(\Th(E))$ the Thom class. 
 Let $j\colon D_{E}\to \Th(E)$ be the canonical map and recall that
 the Thom isomorphism, ${\mathcal Th}\colon H^{k-m}(B)\to H^k(\Th(E))\cong H^k(D_{E},S_{E})$, is defined by
${\mathcal Th}(\gamma)= g^*(\gamma)\cup \theta$.
 The Euler and the Thom  classes are connected by the two exact sequences,
 $$\xymatrix{
 \ldots\ar[r]&
 H^k(\Th(E))\ar[r]^-{j^*}&
 H^k(D_{E})\ar[r]&
 H^k(S_{E})\ar[r]&
 H^{k+1}(\Th(E))\ar[r]&
 \ldots\\
 \ldots\ar[r]&
 H^{k-m}(B)\ar[r]^-{-\cup c}\ar[u]^{{\mathcal Th}}&
 H^k(B)\ar[r]^-{f^*}\ar[u]^-{g^*}&
 H^k(S_{E})\ar[r]\ar@{=}[u]&
 H^{k+1-m}(B)\ar[r]\ar[u]^{\mathcal Th}&
 \ldots
 }$$
and $j^*(\theta)=g^*(c)$. 
From  \propref{prop:isolated}, we know that the complex, $\tN^*_{\ov{p}}(\Th(E))$, is quasi-isomorphic to
\begin{eqnarray*}
\cN^*&=&
N^*(B)\oplus_{N^*(S_{E})}\tau_{\leq \ov{p}(n)}N^*(S_{E})\\
&\cong&
N^{<\ov{p}(n)}(B)\oplus \{\alpha\in N^{\ov{p}(n)}(B)\mid df^*(\alpha)=0\}\\
&&\hskip 1.9cm \oplus (\ker(N^k(B)\xrightarrow[]{f^*}N^k(S_{E}))^{>\ov{p}(n)}.
\end{eqnarray*}
Thus, we recover (see \cite[Page~77]{MR2207421})  the intersection cohomology of the Thom space,
$$H_{\TW,\ov{p}}^k(\Th(E))=H^k(\cN)=\left\{
\begin{array}{lcl}
H^k(B)
&\text{if}&
k\leq \ov{p}(n),\\
(\im {(-\cup c)})^k
&\text{if}&
k=\ov{p}(n)+1,\\
H^{k-m}(B)\cong_{\mathcal Th}  H^k(\Th(E))
&\text{if}&
k>\ov{p}(n)+1.
\end{array}\right.$$

$\bullet$ \emph{In the case $k\leq \ov{p}(n)+1$, the Steenrod squares, $\sq^i\colon H_{\ov{p}}^k(\Th(E))\to H_{\cL(\ov{p},i)}^{k+i}(\Th(E))$,} coincide with the Steenrod squares, $\sq^i\colon H^k(B)\to H^{k+i}(B)$, cf. \remref{rem:coherenceofSq}. 

$\bullet$ \emph{Let $k>\ov{p}(n)+1$} and $\gamma\in H^{k-m}(B)$. The (classical) internal Cartan formula gives, 
\begin{eqnarray*}
\sq^j(g^*(\gamma)\cup\theta)&=&
\sum_{\ell=0}^j  \sq^{j-\ell}(g^*(\gamma))\cup \sq^{\ell}(\theta) \\
&=&
\sum_{\ell=0}^j  g^*(\sq^{j-\ell}(\gamma))\cup g^*(\omega_{\ell})\cup \theta,
\end{eqnarray*}
where the $\omega_{\ell}$'s are the Stiefel-Whitney classes of the fibration $f$, see \cite[Page 91]{MR0440554}.
Set  $\mu=g^*(\gamma)\cup \theta={\mathcal Th}(\gamma)\in H^k(\Th(E))$.
In this range of degrees, the Steenrod squares on $\Th(E)$, denoted by $\sq_{\Th}$, and the Steenrod squares on $B$, denoted by $\sq_{B}$, are related by
$$\sq^j_{\Th}(\mu)=\sum_{\ell=0}^j  g^*(\sq_{B}^{j-\ell}(\gamma)\cup \omega_{\ell})\cup \theta.$$
With the Thom isomorphism, ${\mathcal Th}\colon  H^{k-m}(B)\to H^k(\Th(E))$, the previous formula can be written as,
$$\sq^j_{\Th}(\mu)={\mathcal Th}\left(\sum_{\ell=0}^j \sq_{B}^{j-\ell}( {\mathcal Th}^{-1}(\mu))\cup \omega_{\ell}\right).
$$
\end{example}

\section{Example of a fibration with fiber a cone}\label{sec:the example}

In this section, we construct an example showing the interest of the lifting of the image of $\sq^i$ to the perversity $\cL(\ov{p},i)$ instead of $2\ov{p}$. As the case of $\sq^1$ was analyzed in \cite{MR1014465}, we choose an example with $\sq^2$. 

\begin{proposition}\label{prop:theexample}
There exists a pseudomanifold $X$ and a GM-perversity $\ov{p}$, with an explicit \emph{non-trivial} perverse square,
$$\sq^2\neq 0\colon H_{\TW,\ov{p}}^6(X)\to H_{\TW,\cL(\ov{p},2)}^8(X),$$
whose composition with the canonical map
$H_{\TW,\cL(\ov{p},2)}^8(X)\to H_{\TW,2\ov{p}}^8(X)$
is zero.
\end{proposition}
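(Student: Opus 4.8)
The plan is to realise the phenomenon on a Thom space, which by \propref{prop:isolated} and the discussion of \secref{sec:isolated} is exactly a compact pseudomanifold whose singular set is the cone point of a fibrewise cone on the sphere bundle, i.e.\ a fibration with fiber a cone. Concretely I would take a closed manifold $B$ and a real vector bundle $\R^m\to E\to B$, set $X=\Th(E)$ filtered by the point at infinity, so that $n=\dim B+m$ and the only relevant perverse degree is $\ov{p}(n)$. I would then choose a GM-perversity $\ov{p}$ with $a:=\ov{p}(n)$ in the range $a+1<8\leq 2a$ and with $6>a+1$ as well; for instance $m=4$, $a=4$, $B=\RP^N$ with $N$ large. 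With these choices the description of $H^*_{\ov{p}}(\Th(E))$ recalled in \secref{sec:isolated} gives $H^6_{\ov{p}}(X)\cong H^{6-m}(B)$ and $H^8_{\ov{p}}(X)\cong H^{8-m}(B)$ through the Thom isomorphism $\theta\cup f^*(-)$, whereas $2\ov{p}(n)=8$ forces $H^8_{2\ov{p}}(X)\cong H^8(B)$.

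Next I would produce the square. Pick $\gamma\in H^{6-m}(B)$ with $\sum_{\ell}\omega_{\ell}\cup \sq^{2-\ell}_B(\gamma)\neq 0$ in $H^{8-m}(B)$, where the $\omega_\ell$ are the Stiefel-Whitney classes of $E$, and set $x=\theta\cup f^*(\gamma)\in H^6_{\ov{p}}(X)$. Using the Cartan formula of \thmref{thm:steenrodsquare} together with the Thom-space formula ${\mathcal Th}\circ\sq^j_{\Th}=\sum_{\ell}\omega_\ell\cup(\sq^{j-\ell}_B\circ {\mathcal Th})$ established in \secref{sec:isolated}, I would compute ${\mathcal Th}(\sq^2 x)=\sum_\ell \omega_\ell\cup \sq^{2-\ell}_B(\gamma)\neq 0$, so that $\sq^2 x\neq 0$ in $H^8_{\ov{p}}(X)$. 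In the concrete instance $B=\RP^N$, $m=4$, $E$ trivial, this is simply $\gamma=t^2$, $\sq^2_B(t^2)=t^4\neq0$, and $\sq^2 x=\theta\cup f^*(t^4)\neq 0$. Here one must also watch the perverse bookkeeping of \thmref{thm:steenrodsquare}: the natural target of $\sq^2$ is $\cL(\ov{p},2)$, and for the chosen $a$ the canonical map $H^8_{\ov{p}}(X)\to H^8_{\cL(\ov{p},2)}(X)$ is an isomorphism, so the class is legitimately detected in $H^8_{\ov{p}}(X)$.

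Finally I would identify the perversity-change map and show the class dies. As recalled in \propref{prop:isolated}, increasing the perversity across the threshold replaces $H^8(W,\partial W)=H^8(D_E,S_E)$ by $H^8(W)=H^8(B)$, and the induced canonical map $H^8_{\ov{p}}(X)\to H^8_{2\ov{p}}(X)$ is the forget-supports homomorphism $H^8(D_E,S_E)\to H^8(D_E)$. Under the Thom isomorphism this is cup-product with the Euler class $e$ of $E$. Choosing $E$ with $e=0$ (e.g.\ the trivial bundle of rank $4$, which also makes every $\omega_{\ell>0}$ vanish and keeps the computation of the previous paragraph intact), the composite $H^8_{\ov{p}}(X)\to H^8_{2\ov{p}}(X)$ is zero and hence annihilates $\sq^2 x$; alternatively one may keep $e\neq 0$ and only arrange $e\cup {\mathcal Th}(\sq^2 x)=0$ by means of the Gysin sequence.

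I expect the main obstacle to be the rigorous identification of the canonical map $H^8_{\ov{p}}(X)\to H^8_{2\ov{p}}(X)$ with the forget-supports map, and hence with cup-by-Euler-class, together with the perverse-degree bookkeeping guaranteeing that $\sq^2 x$ is a genuine non-zero class in the finer perversity $\cL(\ov{p},2)$ (which agrees with $\ov{p}$ in degree $8$ here) while becoming a coboundary in $2\ov{p}$. Both points rest on the explicit pullback model of $\tN^*_{\ov{p}}(X)$ given in \propref{prop:isolated} and on the compatibility of $\sq^i$ with restriction, so I would carry them out at the cochain level using that model.
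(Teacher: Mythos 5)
Your construction is correct, but it takes a genuinely different route from the paper. You realize the phenomenon on a Thom space with an isolated singularity (the trivial $\R^4$-bundle over $\RP^N$, filtered by the point at infinity), so that everything is computed from the pullback model of \propref{prop:isolated}: nontriviality of $\sq^2$ comes from $\sq^2(t^2)=t^4$ in $\RP^N$ via the Thom-space formula of \secref{sec:isolated}, and the vanishing comes from identifying the canonical map $H^8_{\ov{p}}(X)\to H^8_{2\ov{p}}(X)$ with the forget-supports map $H^8(D_E,S_E)\to H^8(D_E)$, i.e.\ with cup-product by the Euler class, which is zero for the trivial bundle. The paper instead performs a fibrewise conification of a fibration $E\to \C P(2)$ with fiber $S^7\times S^4$ (pulled back from the Hopf fibration $S^{15}\to S^8$), so the singular set is the positive-dimensional stratum $\C P(2)$; nontriviality is read off from the external Cartan formula ($\sq^2(x\otimes a_4)=x^2\otimes a_4$), and the loss of information in perversity $2\ov{p}$ is forced by the group itself: the Serre spectral sequence differential $d_4(a_7)=x^2\otimes a_4$ acts exactly when the perversity admits the fiber class $a_7$, so $H^8_{2\ov{p}}(X)=0$. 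Your approach is more elementary and self-contained given \secref{sec:isolated} (no spectral sequence, no auxiliary fibration), while the paper's example exhibits the phenomenon over a stratum of positive dimension -- which the introduction emphasizes as modeling a tubular neighborhood of a stratum, the local picture of a general pseudomanifold -- and shows the information can die because the target group vanishes, not merely because the comparison map does. The one point you rightly flag, the identification of the perversity-change map with the forget-supports map, is not a gap: the quasi-isomorphism of \propref{prop:isolated} is built from the triangulation and the truncation inclusions $\tau_{\leq\ov{p}(n)}N^*(\partial W)\subseteq \tau_{\leq 2\ov{p}(n)}N^*(\partial W)$, hence is natural in the perversity, and in degree $8>\cL(\ov{p},2)(n)+1$ your square lives in the $\ov{p}$-component of the model, as required.
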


\begin{proof}
To begin with, we describe the general strategy of the proof. The first step is the construction of a fibration,
$S^7\times S^4\to E \xrightarrow{\varphi} \C P(2)$,
with a non-trivial differential on a generator $a_{7}$ of $H^7(S^7\times S^4)$,
in the Serre spectral sequence. 
Secondly, we consider the fiberwise conification,
$c(S^7\times S^4)\to X\xrightarrow[]{\psi} \C P(2)$,
of the fibration $\varphi$.
The space $X$ is a pseudomanifold. A GM-perversity, $\ov{p}$, on $X$ is determined by the value $\ov{p}(12)=k$
and we denote it by $\ov{k}$.
(As $\ov{p}$ is a GM-perversity, we have $k\leq 10$.) 
In our fibration, depending on the value of $k$, the element $a_{7}$ is a class of $\ov{p}$-intersection or not;
more precisely, we get
$H^8_{\TW,\ov{k}}(X)\neq 0$ if $k=6$ and $H^8_{\TW,\ov{k}}(X)=0$ if $k=8$.
This property generates a non-trivial Steenrod square, $\sq^2\colon H^6_{\TW,\ov{p}}(X)\to H^8_{\TW,\cL(\ov{p},2)}(X)$,
such that the composite with the canonical map,
$H^6_{\TW,\ov{p}}(X)\xrightarrow[]{\sq^2} H^8_{\TW,\cL(\ov{p},2)}(X)\to H^8_{\TW,2\ov{p}}(X)$,
is the zero map. Details are as follows.

$\bullet$ First, we observe, from the cellular approximation theorem and the construction of $K(\Z,8)$, that the classifying map of the top class,
$\C P(2)\times S^4\to K(\Z,8)$,
lifts as a map $f\colon \C P(2)\times S^4\to S^8$. We denote by $p_{1}\colon E\to \C P(2)\times S^4$ the pullback of the Hopf fibration, $S^{15}\to S^8$, along $f$. We compose $p_{1}$ with the trivial fibration, $p_{2}\colon \C P(2)\times S^4\to \C P(2)$ and obtain a fibration
$$\varphi\colon E\to \C P(2),$$
whose  fiber, $F$, is $S^7\times S^4$. To show this last point, consider the next commutative diagram:
$$\xymatrix@=6pt{
F\ar[rr]\ar[dd]&&E\ar[rr]\ar[dd]^{p_{1}}&&S^{15}\ar[dd]\\
&\fbox{2}&&\fbox{3}&\\
S^4\ar[rr]\ar[dd]&&\C P(2)\times S^4\ar[rr]^-f\ar[dd]^{p_{2}}&&S^8\\
&\fbox{1}&&&\\
\ast\ar[rr]&&\C P(2)&&
}$$
The rectangle formed of \fbox{1} and \fbox{2} is a pullback. As \fbox{1} is a pullback, we deduce (\cite[Section III.4]{MR1712872}) that \fbox{2} is a pullback. Therefore, the rectangle formed of \fbox{2} and \fbox{3} is a  pullback and the triviality of the map $S^4\to S^8$ implies that $F$ is $S^7\times S^4$. 

We study now the Serre spectral sequence of the fibration $\varphi$. We denote by $a_{4}$, $a_{7}$ and $a_{7}\times a_{4}$ the generators of the reduced cohomology of $S^7\times S^4$ and by $x$ and $x^2$ the generators of the reduced cohomology of $\C P(2)$.
An inspection of the degrees in the differentials,
$d_{r}\colon E_{r}^{s,t}\to E_{r}^{s+r,t-r+1}$,
shows that the only differential which can be potentially non-trivial is
$$d_{4}\colon E_{4}^{0,7}=E_{2}^{0,7}=\F_{2} a_{7}\to E_{4}^{4,4}=E_{2}^{4,4}=\F_{2}(x^2\otimes a_{4}).$$
By definition of $S^7\to E\to \C P(2)\times S^4$ as a pullback of the Hopf fibration, we already know 
(\cite[Section III.4]{MR0045386})
that the top class $a_{7}$ of $S^7$ transgresses on the product $x^2\times a_{4}$. 
This gives $d_{4}(a_{7})=x^2\otimes a_{4}$ in the Serre spectral sequence of the fibration $\varphi\colon E\to \C P(2)$.

We continue with the determination of the image of the cohomology class $x\otimes a_{4}$ by $\sq^2$ in $H^*(\C P(2))\otimes H^*(S^7\times S^4)$. From the external Cartan formula, we have
$$\sq^2(x\otimes a_{4})=\sq^2(x)\otimes a_{4}+\sq^1(x)\otimes\sq^1(a_{4})+x\otimes \sq^2(a_{4}).$$
The last two terms are zero, for degree reasons. The equality $\sq^2(x)=x^2$ gives 
$$\sq^2(x\otimes a_{4})=x^2\otimes a_{4}.$$

$\bullet$ The second step is the fiberwise conification, $c(S^7\times S^4)\to X
\xrightarrow[]{\psi}
\C P(2)$, of the  fibration $\varphi$.
If $x\in \C P(2)$, we denote by $(S^7\times S^4)_{x}$ the fiber over $x$ and by $\tv_{x}$ the cone point of the cone
$c((S^7\times S^4)_{x})$.
A continuous section $\mu$ of $\psi$, defined by $\mu(x)=\tv_{x}$, identifies $\C P^2$ to a closed subspace of ${X}$.
We filter $X$ by $\emptyset\subset X_{0}=\C P(2)\subset X$. Observe that the singular set in $X$ is  $\C P(2)$ and that the link of a singular point is $S^7\times S^4$. 

Let $\ov{k}$ be a GM-perversity. 
The intersection cohomology, $H^*_{\TW,\ov{k}}(X)$, is the abutment 
(see \cite[Theorem 3.5]{MR2354985}) of a Serre spectral sequence with
$$\mbox{}_{\ov{k}}E_{2}^{r,s}=H^r(\C P(2))\otimes H^s_{\TW,\ov{k}}(c(S^7\times S^4)).$$
We may replace the right-hand term of this tensor product by its value and obtain
$$\mbox{}_{\ov{k}}E_{2}^{r,s}=H^r(\C P(2))\otimes H^{s}(S^7\times S^4),$$
if $s\leq k$ and 0 otherwise. The existence of a morphism, $E\to X$, over the identity on $\C P(2)$, gives a morphism of spectral sequences,
$(\mbox{}_{\ov{k}}E_{*}^{r,s},d_{*})\to  (E_{*}^{r,s},d_{*})$. From our previous determination of the Serre spectral sequence, 
$(E_{*}^{r,s},d_{*})$, associated to the fibration $\varphi\colon E\to \C P(2)$, we deduce that the differentials $d_{*}$ of 
$\mbox{}_{\ov{k}}E_{*}^{r,s}$ are zero, except $d_{4}(a_{7})=x^2\otimes a_{4}$, if $7\leq k$. 
Thus, in perversity $k<7$, as the class $a_{7}$ is not of $\ov{k}$-intersection, 
the class $x^2\otimes a_{4}$ survives and $H^8_{\TW,\ov{k}}(X)\neq 0$. 
But, if $k=8$, the class $a_{7}$ is of $\ov{8}$-intersection and kills the element $x^2\otimes a_{4}$ (which is the only element of degree 8 in the $E_{2}$-term). Thus $H^8_{\TW,\ov{8}}(X)=0$.

The square $\sq^2$, that we have previously determined, arises in the GM-perversity~$\ov{4}$ and we have
$$\sq^2\colon H^6_{\TW,\ov{4}}(X)=\F_{2}(x\otimes a_{4})\to H^8_{\TW,\ov{4}}(X)=\F_{2}(x^2\otimes a_{4}).$$
Observe that $\ov{6}=\cL(\ov{4},\ov{4}+2)$ is a GM-perversity and thus, with the argument above, $\sq^2$ still survives as map from $H^6_{\TW,\ov{4}}$ to $H^8_{\TW,\ov{6}}=H^8_{\TW,\ov{4}}$. 
But, for the GM-perversity $\ov{8}=2\,\times \ov{4}$, as $H^8_{\TW,\ov{8}}(X)=0$,  this square $\sq^2$ disappears if we express it as a map from
$H^6_{\TW,\ov{4}}$ to $H^8_{\TW,2\,\times\ov{4}}$.
\end{proof}

\section{Topological invariance of the Steenrod squares in intersection cohomology}\label{sec:topinvariance}

In the case of PL-pseudomanifolds, we know from \cite{MR761809} that the Steenrod squares are topological invariants,
as homomorphisms
$H^r_{\TW,\ov{p}}(X)\to H_{\TW,2\ov{p}}^{r+i}(X)$.
In this section, we prove that the lifting we have introduced before,
$\sq^i\colon H^r_{\TW,\ov{p}}(X)\to H^{r+i}_{\TW,\cL(\ov{p},i)}(X)$,
is also a topological invariant. The proof is based on the original combinatorial description of Steenrod squares made in 
\cite{MR0022071}.

\begin{theorem}\label{thm:topinvariance}
Let $X$ be an $n$-dimensional PL-pseudomanifold and $\ov{p}$ be a GM-perversity. Then, the Steenrod squares,
$\sq^i\colon H^*_{\TW,\ov{p}}(X)\to H^{*+i}_{\TW,\cL(\ov{p},i)}(X)$,
do not depend on the stratification of $X$.
\end{theorem}

\thmref{thm:topinvariance} is a direct consequence of \propref{prop:XandXqi} and \propref{prop:XandXsteenrod}. Before stating and proving these two results, we need to introduce  some material.
First, recall from \cite[Page 150]{MR800845} and \cite[Chapter~2]{IHGreg}, 
the existence of a PL-pseudomanifold, $X^*$, 
which is an intrinsic coarsest stratification of $X$, together with a stratified map, $\nu\colon X\to X^*$, 
defined by the identity map, see \cite[Definition A.18]{2012arXiv1205.7057C}. 
In \cite{MR800845}, H. King proves that $\nu$ induces a quasi-isomorphism 
between the Goresky-MacPherson chain (and cochain) complexes.  
Here we consider the map $\chi$, induced by $\nu$ between the Thom-Whitney complexes. 

\begin{proposition}\label{prop:XandXqi}
Let $X$ be an $n$-dimensional PL-pseudomanifold and $\ov{p}$ be a GM-perversity.
 Then the canonical map, $\nu\colon X\to X^*$, induces a quasi-isomorphism,\linebreak
$\chi\colon \tN^*_{\ov{p}}(X^*)\to \tN^*_{\ov{p}}(X)$.
\end{proposition}

\subsection{Construction of $\chi$, the local step}
Before giving the proof, we detail the \emph{construction of $\chi$,} based on the effect of $\nu\colon X\to X^*$ on filtered simplices of $X$.
Let  $\sigma\colon \Delta=\Delta^{j_{0}}\ast\cdots\ast\Delta^{j_{n}}\to X$ be a filtered simplex of $X$. 
Suppose that, 
\begin{itemize}
\item for some integer $0\leq i\leq n-1$, the set 
$\sigma(\Delta^{j_{0}}\ast\cdots\ast\Delta^{j_{i}})\backslash \sigma(\Delta^{j_{0}}\ast\cdots\ast\Delta^{j_{i-1}})$
 is included in an $i$-stratum of $X$ which ``disappears'' inside an $(i+1)$-stratum of~$X^*$, 
\item for the other indices, $\ell\neq i$, the corresponding strata of $\sigma(\Delta)$ stay unmodified.
\end{itemize}
Then, the filtered simplex $\sigma\colon \Delta=\Delta^{j_{0}}\ast\cdots\ast\Delta^{j_{n}}\to X$ becomes a filtered simplex of $X^*$,
$\nu\circ\sigma\colon \Delta(i)=
\Delta^{k_{0}}\ast\cdots\ast\Delta^{k_{n}}
\to X^*$, with
\begin{equation}\label{equa:deltai}
\left\{\begin{array}{lcl}
k_{\ell}=j_{\ell}&\text{ if }&\ell<i \text{ or } \ell >i+1,\\
k_{i}=-1&\text{ and } &k_{i+1}=j_{i}+j_{i+1}+1.
\end{array}\right.
\end{equation}
This process is called an \emph{elementary amalgamation.}
In general, the simplex $\nu\circ\sigma\colon \Delta\to X^*$ can be written as a filtered simplex after a finite number of elementary amalgamations. 
As we work with blow-ups,  we need to consider two cases, depending if $i+1=n$ or not. We write
$$\tN^*(\Delta)=N^*(c\Delta^{j_{0}})
\otimes\cdots\otimes
N^*(c\Delta^{j_{i}})\otimes N^*(c\Delta^{j_{i+1}})
\otimes\cdots\otimes
N^*(\Delta^{j_{n}})$$
and
$$\tN^*(\Delta(i))=\left\{
\begin{array}{ll}
N^*(c\Delta^{j_{0}})
\otimes\cdots\otimes
N^*(c\emptyset)\otimes N^*(c\Delta^{j_{i}+j_{i+1}+1})
\otimes\cdots\otimes
N^*(\Delta^{j_{n}}),
&
\text{ if }i\neq n-1,\\
N^*(c\Delta^{j_{0}})
\otimes\cdots\otimes
N^*(c\emptyset)\otimes N^*(\Delta^{j_{n-1}+j_{n}+1}),
&
\text{ if }i= n-1.
\end{array}\right.$$

We define below two morphisms,
$$\alpha\colon N^*(c\Delta^{a+b+1})\to N^*(c\Delta^a)\otimes N^*(c\Delta^b)
\text{ and }
\beta \colon N^*(c\Delta^{a+b+1})\to N^*(c\Delta^a)\otimes N^*(\Delta^b),$$
which correspond to the cases $i\neq n-1$ and $i=n-1$. 

Let $\tv$ be the cone point of $c\emptyset$. We use $\alpha$ and $\beta$ for the definition of a morphism
$\xi_{i}\colon \tN^*(\Delta(i))\to \tN^*(\Delta)$ as follows.
If $\Phi=\sum_{j}\Phi_{0,j}\otimes\cdots\otimes \Phi_{n,j}\in \tN^*(\Delta(i))$, we set
\begin{itemize}
\item for $i\neq n-1$,
\begin{equation}\label{equa:xii}
\xi_{i}(\Phi)=\sum_{j}\Phi_{i,j}([\tv])\cdot\Phi_{0,j}\otimes\cdots\otimes
\Phi_{i-1,j}\otimes \alpha(\Phi_{i+1,j})\otimes \Phi_{i+2,j}\otimes\cdots\otimes \Phi_{n,j},
\end{equation}
\item for $i= n-1$,
\begin{equation}\label{equa:xin}
\xi_{i}(\Phi)=\sum_{j}\Phi_{n-1,j}([\tv])\cdot\Phi_{0,j}\otimes\cdots\otimes\Phi_{n-2,j}\otimes \beta(\Phi_{n,j}).
\end{equation}
\end{itemize}
These $\xi_{i}$'s are the local ingredients used in the (global) definition of $\chi$, stated below.

\subsection{Construction of $\alpha\colon N^*(c\Delta^{a+b+1})\to N^*(c\Delta^a)\otimes N^*(c\Delta^b)$}
We define $\alpha$ by its values on the elements of a basis. 
If $L$ is one of the simplicial complexes, $c\Delta^a$, $c\Delta^b$ or $c\Delta^{a+b+1}$, we denote by $\{1_{F}\}$ the dual basis of $N^*(L)$ obtained from the basis of faces, $F$, of $L$.

If we represent by $F_{a}$ the faces of $\Delta^a$ and by $F_{b}$ the faces of $\Delta^b$, a face of $c\Delta^{a+b+1}$
is of the type $c(F_{a}\ast F_{b})$ or $F_{a}\ast F_{b}$, where $F_{a}$ and $F_{b}$ can also be the emptyset. 
A linear map $\alpha$ is entirely determined by
$$\left\{\begin{array}{lcll}
\alpha(1_{c(F_{a}\ast F_{b})})&=&
1_{cF_{a}}\otimes 1_{cF_{b}}, &\text{ the cases } F_{a}=\emptyset, F_{b}=\emptyset \text{ being included,}
\\
\alpha(1_{F_{a}\ast F_{b}})&=&
1_{cF_{a}}\otimes 1_{F_{b}}, &\text{ if } F_{b}\neq \emptyset, \text{ the case } F_{a}=\emptyset \text{ being included,}
\\
\alpha(1_{F_{a}})&=&
1_{F_{a}}\otimes 1_{\tv_{b}} + 1_{F_{a}}\otimes 1_{\mathbb V_{b}},
\end{array}\right.$$
where $1_{\mathbb V_{b}}$ is the sum of $1_{\tp}$ when $\tp$ runs in the set of vertices of $\Delta^b$ and $\tv_{b}$ is the
cone point of $c\Delta^b$.

\subsection{Construction of $\beta \colon N^*(\Delta^{a+b+1})\to N^*(c\Delta^a)\otimes N^*(\Delta^b)$} With the previous notation, the linear map $\beta$ is defined by
$$\left\{\begin{array}{lcll}
\beta(1_{F_{a}\ast F_{b}})&=&
1_{cF_{a}}\otimes 1_{F_{b}}, &\text{ if } F_{b}\neq \emptyset, \text{ the case } F_{a}=\emptyset \text{ being included,}
\\
\beta(1_{F_{a}})&=&
1_{F_{a}}\otimes 1_{\mathbb V_{b}}.
\end{array}\right.$$

These maps verify the next properties whose proofs are postpone after the proof of \propref{prop:XandXqi}.

\begin{lemma}\label{lem:alphabeta}
The two morphisms, $\alpha\colon N^*(c\Delta^{a+b+1})\to N^*(c\Delta^a)\otimes N^*(c\Delta^b)$ and\linebreak
 $\beta \colon N^*(\Delta^{a+b+1})\to N^*(c\Delta^a)\otimes N^*(\Delta^b)$,
 are compatible with the differentials and the restrictions to faces of $\Delta^a$ and $\Delta^b$.
\end{lemma}

\begin{lemma}\label{lem:ksii}
The morphism
$\xi_{i}\colon \tN^*(\Delta(i))\to \tN^*(\Delta)$ is compatible  with the differentials and the restrictions to faces of the $\Delta^{j_{\ell}}$'s. Moreover, it respects the perverse degree, i.e.,
$\xi_{i}(\tN^*_{\ov{p}}(\Delta(i)))
\subset
\tN^*_{\ov{p}}(\Delta)$, for any GM-perversity, $\ov{p}$.
\end{lemma}

\subsection{Construction of $\chi\colon \tN^*(X^*)\to \tN^*(X)$, the global step}

Let $\sigma\colon \Delta_{\sigma}=\Delta^{j_{0}}\ast\cdots\ast \Delta^{j_{n}}\to X$ be a filtered simplex of $X$, of blow-up 
$\tdelta_{\sigma}=c\Delta^{j_{0}}\times\cdots\times \Delta^{j_{n}}$. 
As we have noted before, the domain of the \emph{filtered} simplex,
$\nu\circ \sigma\colon \Delta_{\nu\circ\sigma}=\Delta^{k_{0}}\ast\cdots\ast \Delta^{k_{n}}\to X^*$,
 has a different decomposition, obtained by a succession of elementary amalgamations. 
 We denote by $\tdelta_{\nu\circ\sigma}$ the associated blow-up.
 
 These elementary amalgamations give a finite sequence of decompositions, $\Delta(i_{\ell})_{0\leq \ell\leq m}$,
 such that $\Delta(0)=\Delta^{j_{0}}\ast\cdots\ast \Delta^{j_{n}}=\Delta_{\sigma}$
 and
 $\Delta(m)=\Delta^{k_{0}}\ast\cdots\ast \Delta^{k_{n}}=\Delta_{\nu\circ\sigma}$.
  Two consecutive terms correspond to an elementary amalgamation, i.e.,
 $\Delta(i_{\ell})=\Delta^{x_{0}}\ast\cdots\ast\Delta^{x_{n}}$
 and
 $\Delta(i_{\ell +1})=\Delta^{y_{0}}\ast\cdots\ast\Delta^{y_{n}}$,
 with
 $$\left\{\begin{array}{lcl}
y_{u}=x_{u}&\text{ if }&u<i_{\ell} \text{ or } u >i_{\ell}+1,\\
y_{i_{\ell}}=-1&\text{ and } &y_{i_{\ell}+1}=x_{i_{\ell}}+x_{i_{\ell+1}}+1.
\end{array}\right.$$
Recall the map $\xi_{i_{\ell}} \colon \tN^*(\Delta(i_{\ell+1}))\to \tN^*(\Delta(i_{\ell}))$ defined in (\ref{equa:xii}) and (\ref{equa:xin}). We set,
$$\chi_{\sigma}=\xi_{i_{0}}\circ\cdots\circ\xi_{i_{m-1}}.$$
Finally, with \lemref{lem:alphabeta}, we have a map, $\chi\colon \tN^*(X^*)\to \tN^*(X)$, defined 
on $\sigma\colon \Delta\to X$ and $\Phi\in \tN^*(X^*)$, by
$$\chi(\Phi)_{\sigma}=\chi_{\sigma}(\Phi_{\nu\circ\sigma}).$$

\begin{proof}[Proof of \propref{prop:XandXqi}]
With \lemref{lem:ksii}, the previous map,
$\chi\colon \tN^*(X^*)\to \tN^*(X)$,
is a cochain map which restricts as
$\chi\colon \tN^*_{\ov{p}}(X^*)\to \tN^*_{\ov{p}}(X)$.

\smallskip
There exists also a map, $\ev_{N}\colon \tN^*(X)\to \hom(N_{*}^{\GM}(X),\F_{2})$, defined as follows.\\
For any $\Phi\in \tN^*(X)$ and 
$\sigma\colon \Delta^{j_{0}}\ast\cdots\ast\Delta^{j_{n}}\to X$,
with $\Phi_{\sigma}=\sum_{j}\Phi_{0,j}\otimes\cdots\otimes \Phi_{n,j}\in
N^*(c\Delta^{j_{0}})\otimes\cdots\otimes N^*(\Delta^{j_{n}})$, we set
$$\ev_{N} (\Phi)(\sigma)=\sum_{j}
\Phi_{0,j}([c\Delta^{j_{0}}])\cdot\ldots\cdot
\Phi_{n,j}([\Delta^{j_{n}}]).$$ 
Let $\ov{q}$ be the GM-perversity such that $\ov{p}+\ov{q}=\ov{t}$.
The canonical morphism, $\rho_{*}\colon C_{*}(-)\to N_{*}(-)$,
induces $\rho^*\colon \hom(N_{*}^{\GM,\ov{q}}(-),\F_{2})\to \hom(C_{*}^{\GM,\ov{q}}(-),\F_{2})$
and 
$\widetilde{\rho}\colon \tN^*_{\ov{p}}(-)\to \tC^*_{\ov{p}}(-)$.
The previous map, $\ev_{N}$  is connected with the morphism $\ev$ introduced in the proof of \lemref{lem:Upetits} by
$\rho^*\circ \ev_{N}=\ev\circ \widetilde{\rho}$.
As $\widetilde{\rho}$ and $\ev$ are quasi-isomorphisms (cf. \propref{prop:CandN} and \propref{prop:GMetTW}),
we know that the composite
$\rho^*\circ \ev_{N}$ is a quasi-isomorphism. 
Consider now the following diagram,
\begin{equation}\label{equa:petittableau}
\xymatrix{
\tN^*_{\ov{p}}(X^*)\ar[r]^-{\ev_{N}}\ar[d]_{\chi}&
\hom(N_{*}^{\GM,\ov{q}}(X^*),\F_{2})\ar[r]^{\rho^*}\ar[d]^{N^*(\nu)}&
\hom(C_{*}^{\GM,\ov{q}}(X^*),\F_{2})\ar[d]^{C^*(\nu)}\\
\tN^*_{\ov{p}}(X)\ar[r]^-{\ev_{N}}&
\hom(N_{*}^{\GM,\ov{q}}(X),\F_{2})\ar[r]^{\rho^*}&
\hom(C_{*}^{\GM,\ov{q}}(X),\F_{2}).
}\end{equation}
The right-hand square is  commutative by naturality of $\rho_{*}$. 
We prove now the commutativity of the left-hand one.
Let $\Phi\in \tN^*_{\ov{p}}(X^*)$ and 
$\sigma\colon \Delta_{\sigma}=\Delta^{j_{0}}\ast\cdots\ast \Delta^{j_{n}}\to X$ 
be a filtered simplex, of associated filtered simplex 
$\nu\circ\sigma\colon \Delta_{\nu\circ\sigma}\to X^*$.
We have to check
\begin{equation}\label{equa:commut}
(N^*(\nu)\circ \ev_{N}(\Phi))(\sigma) = \ev_{N}(\chi(\Phi))(\sigma).
\end{equation}

For a given $\sigma$, we can decompose $\nu$ in a finite number of elementary amalgamations 
and thus replace 
$\chi_{\sigma}\colon \tN^*(\Delta_{\nu\circ\sigma})\to
\tN^*(\Delta_{\sigma}) 
$
by $\xi_{i}\colon \tN^*(\Delta(i))\to \tN^*(\Delta)$, as defined in 
(\ref{equa:xii}) and (\ref{equa:xin}).
 Set 
$\Phi_{\nu\circ\sigma}=\sum_{j}\Phi_{0,j}\otimes\cdots\otimes \Phi_{n,j}\in \tN^*(\Delta(i))$
and suppose $i\neq n-1$.
By definition (\ref{equa:xii}), we have
\begin{eqnarray*}
\chi(\Phi)_{\sigma}&=&\chi_{\sigma}(\Phi_{\nu\circ\sigma})=\xi_{i}(\Phi_{\nu\circ\sigma})\\
&=&\sum_{j}\Phi_{i,j}([\tv])\cdot\Phi_{0,j}\otimes\cdots\otimes
\Phi_{i-1,j}\otimes \alpha(\Phi_{i+1,j})\otimes \Phi_{i+2,j}\otimes\cdots\otimes \Phi_{n,j}
\end{eqnarray*}
and the right-hand side of (\ref{equa:commut}) is equal to
\begin{eqnarray*}
\ev_{N}(\chi(\Phi))(\sigma)&=&\sum_{j}\Phi_{0,j}([c\Delta^{j_{0}}])\cdots
\Phi_{i-1,j}([c\Delta^{j_{i}}])\cdot \Phi_{i,j}([\tv])\cdot\\
&&\alpha(\Phi_{i+1,j})([c\Delta^{j_{i}}]\otimes [c\Delta^{j_{i+1}}])
\cdots \Phi_{n,j}([\Delta^{j_{n}}]).
\end{eqnarray*}
We determine now the left-hand side of (\ref{equa:commut}),
\begin{eqnarray*}
(N^*(\nu)\circ \ev_{N}(\Phi))(\sigma)
&=&
\ev_{N}(\Phi)(\nu\circ \sigma)\\
&=&
\sum_{j}\Phi_{0,j}([c\Delta^{j_{0}}])
\cdots
\Phi_{i-1,j}([c\Delta^{j_{i-1}}])\cdot \Phi_{i,j}([\tv])\cdot\\
&&
 \Phi_{i+1,j}([c\Delta^{j_{i}+j_{i+1}+1}])
\cdots
 \Phi_{n,j}([c\Delta(j_{n}]).
\end{eqnarray*}
Thus, the left-hand and the right-hand sides coincide by definition of $\alpha$. A similar argument gives the result when $i=n-1$.

\smallskip
We have established above that the two horizontal lines of the commutative square (\ref{equa:petittableau}) are 
quasi-isomorphisms. The right-hand vertical map is a quasi-isomorphism also (cf. \cite{MR800845}). Thus, 
$\chi\colon \tN^*_{\ov{p}}(X^*)\to \tN^*_{\ov{p}}(X)$
is a quasi-isomorphism.
\end{proof}

\begin{proof}[Proof of \lemref{lem:alphabeta}]
We \emph{consider first the map $\alpha$} and its behavior with restriction maps. Let $\nabla_{a}$ and $\nabla_{b}$ be faces of $\Delta^a$ and $\Delta^b$, respectively, including the cases $\nabla_{a}=\emptyset$ or $\nabla_{b}=\emptyset$. Then the following diagram commutes,
$$\xymatrix{
N^*(c\Delta^{a+b+1})\ar[r]^-{\alpha}\ar[d]_{\res}&
N^*(c\Delta^a)\otimes N^*(c\Delta^b)\ar[d]^{\res}\\
N^*(c\nabla^{a+b+1})\ar[r]^-{\alpha}&
N^*(c\nabla^a)\otimes N^*(c\nabla^b),
}$$
where the two vertical maps are given by the restriction map. To verify this assertion, 
we consider two faces, $F_{a}$ of $\nabla_{a}$ and $F_{b}$ of $\nabla_{b}$, 
and check the commutativity for the cochain $1_{c(F_{a}\ast F_{b})}$, the other cases being similar,
$$\res(\alpha(1_{c(F_{a}\ast F_{b})}))=\res(1_{cF_{a}}\otimes 1_{cF_{b}})=1_{cF_{a}}\otimes 1_{cF_{b}}=\alpha(1_{c(F_{a}\ast F_{b})})=
\alpha(\res(1_{c(F_{a}\ast F_{b})})).$$

Now, comes the differential. 
Let ${L}$ be a finite simplicial complex, endowed with a partial order of its vertices
 such that the vertices of any simplex are simply ordered. In the cone, $cL$, the cone point is the greatest element.
In the sequel, we adopt (see \cite[Page 292]{MR0022071}) the 
\subsection{Steenrod's convention:}\label{steenrod}  \emph{A symbol, as $F$, $G$, $\nabla$ will denote, ambiguously, either (1) a simplex of $L$ or (2) the array of vertices of the simplex ordered as in $L$, or (3) the orientation of the simplex determined by this order, or (4) the elementary cochain which attaches +1 to this oriented simplex and 0 to all others. The ambiguity can usually be resolved by examining the context in which the symbol is used.}

With this convention, the definition of 
$\alpha\colon N^*(c\Delta^{a+b+1})\to N^*(c\Delta^a)\otimes N^*(c\Delta^b)$ 
can be written as 
$$\left\{\begin{array}{lcll}
\alpha({c(F_{a}\ast F_{b})})&=&
{cF_{a}}\otimes {cF_{b}}, &\text{ the cases } F_{a}=\emptyset, F_{b}=\emptyset \text{ being included,}
\\
\alpha({F_{a}\ast F_{b}})&=&
{cF_{a}}\otimes {F_{b}}, &\text{ if } F_{b}\neq \emptyset, \text{ the case } F_{a}=\emptyset \text{ being included,}
\\
\alpha({F_{a}})&=&
{F_{a}}\otimes {\tv_{b}} + {F_{a}}\otimes {\mathbb V_{b}},
\end{array}\right.$$
where ${\mathbb V_{b}}$ denotes the sum of  the vertices of $\Delta^b$.
The definition of the coboundary with this convention is also specified in \cite[Page 296]{MR0022071}. 
Let $F_{a}=(a_{0},\ldots,a_{k})$ be a nonempty face of a simplex $\Delta^a$,  we denote by
$cF_{a}=(a_{0},\ldots,a_{k},\tv_{a})$ the face obtained from the adjunction of the cone point $\tv_{a}$. 
It is important to observe that, in this setting, the differential of a face $F$ (view as a cochain) depends on the simplicial complex in which we do the computation.
For instance, the differentials $\da$ in $\Delta^a$ and  $\dca$  in $c\Delta^a$ are linked by
$$\left\{\begin{array}{lcl}
\dca (cF_{a})&=& c(\da F_{a}),\\
\dca F_{a}&=&\da F_{a}+cF_{a},\\
\dca \tv_{a}&=& c{\mathbb V_{a}},
\end{array}\right.$$
where $\mathbb V_{a}$ is the sum of the vertices of $\Delta^a$. If $F_{b}$ is a nonempty face in $\Delta^b$, the differential $\dab$ in $\Delta^a\ast\Delta^b$ is defined by:
$$\left\{\begin{array}{lcl}
\dab (F_{a}\ast F_{b})&=&
(\da F_{a})\ast F_{b}+F_{a}\ast (\db F_{b}),\\
\dab F_{a}&=&
\da F_{a}+ F_{a}\ast {\mathbb V_{b}},\\
\dab F_{b}&=&
\mathbb V_{a}\ast F_{b}+\db F_{b}.
\end{array}\right.$$
The differential on $c(\Delta^a\ast \Delta^b)=(c\Delta^a)\ast\Delta^b$ can be deduced from the combination of the previous equalities; 
we denote it by $\dcab$.
We make uniform the notations by setting $\dcacb$ and $\dcatb$ for the product differentials
on $N^*(c\Delta^a)\otimes N^*(c\Delta^b)$ and $N^*(c\Delta^a)\otimes N^*(\Delta^b)$, respectively.
We verify now the compatibility of $\alpha$ with the differentials, by considering the various cases.
\begin{itemize}
\item \emph{Suppose $F_{a}\neq \emptyset$ and $F_{b}\neq \emptyset$.}
\begin{eqnarray*}
(\dcacb)(\alpha(c(F_{a}\ast F_{b})))
&=&
(\dcacb)(cF_{a}\otimes cF_{b})=
\dca (cF_{a})\otimes cF_{b}+cF_{a}\otimes \dcb (cF_{b})\\
&=&
c\da F_{a}\otimes cF_{b}+cF_{a}\otimes c\db F_{b}\\
&=&
\alpha(c((\da F_{a})\ast F_{b}))+\alpha(c(F_{a}\ast(\db F_{b})))
=\alpha(c(\dab(F_{a}\ast F_{b})))\\
&=&
\alpha(\dcab c(F_{a}\ast F_{b})).
\end{eqnarray*}
\begin{eqnarray*}
(\dcacb)\alpha(F_{a}\ast F_{b})
&=&
(\dcacb)(cF_{a}\otimes F_{b})=
(\dca cF_{a} )\otimes F_{b}+cF_{a}\otimes (\dcb F_{b})\\
&=&
(c\da F_{a})\otimes F_{b}+cF_{a}\otimes (\db F_{b})+cF_{a}\otimes cF_{b}\\
&=&
\alpha((\da F_{a})\ast F_{b})+\alpha(F_{a}\ast (\db F_{b}))+ \alpha(c(F_{a}\ast F_{b}))\\
&=&
\alpha(\dab (F_{a}\ast F_{b})+c(F_{a}\ast F_{b}))\\
&=&
 \alpha(\dcab (F_{a}\ast F_{b})).
\end{eqnarray*}
\item \emph{Suppose $F_{a}\neq \emptyset$ and $F_{b}= \emptyset$.}
\begin{eqnarray*}
(\dcacb)\alpha(cF_{a})
&=&
(\dcacb)(cF_{a}\otimes \tv_{b})=
(\dca cF_{a})\otimes \tv_{b}+cF_{a}\otimes \dcb\tv_{b}\\
&=&
(c\da F_{a})\otimes \tv_{b}+(cF_{a})\otimes c\mathbb V_{b}=
\alpha(c(\da F_{a}+F_{a}\ast \mathbb V_{b}))\\
&=&
\alpha(c(\dab F_{a}))=\alpha(\dcab cF_{a}).
\end{eqnarray*}
\begin{eqnarray*}
(\dcacb)\alpha(F_{a})
&=&
(\dcacb)(F_{a}\otimes \tv_{b}+F_{a}\otimes \mathbb V_{b})
= \dca F_{a}\otimes (\tv_{b} +\mathbb V_{b})+F_{a}\otimes \dcb (\tv_{b}+\mathbb V_{b})\\
&=&
(\da F_{a})\otimes (\tv_{b} +\mathbb V_{b})+cF_{a}\otimes \tv_{b}+cF_{a}\otimes \mathbb V_{b}+0\\
&=&
\alpha(\da F_{a})+\alpha(cF_{a})+\alpha(F_{a}\ast \mathbb V_{b})
=\alpha(\dca F_{a}+cF_{a})\\
&=&
\alpha(\dcab F_{a}).
\end{eqnarray*}
(We have used $\dca F_{a}=\da F_{a}+cF_{a}$ and
$\dcb(\tv_{b}+\mathbb V_{b})=0$. 
For the last one, observe that $\tv_{b}+\mathbb V_{b}$ is the non trivial cocycle in degree~0.)
\item \emph{Suppose $F_{a}= \emptyset$ and $F_{b}\neq \emptyset$.}
\begin{eqnarray*}
(\dcacb)\alpha(cF_{b})
&=&
(\dcacb)(\tv_{a}\otimes cF_{b})=c\mathbb V_{a}\otimes cF_{b}+\tv_{a}\otimes c\db F_{b}\\
&=&
\alpha(c(\mathbb V_{a}\ast F_{b}))+\alpha(c\db F_{b})=\alpha(c\dab F_{b})\\
&=&
\alpha(\dcab cF_{b}).
\end{eqnarray*}
\begin{eqnarray*}
(\dcacb)\alpha(F_{b})
&=&
(\dcacb)(\tv_{a}\otimes F_{b})= c\mathbb V_{a}\otimes F_{b}+ \tv_{a}\otimes \dcb F_{b}\\
&=&
c\mathbb V_{a}\otimes F_{b}+\tv_{a}\otimes \db F_{b}+\tv_{a}\otimes cF_{b}\\
&=&
\alpha(\mathbb V_{a}\ast F_{b})+\alpha(\db F_{b})+\alpha(cF_{b})\\
&=&
\alpha(\dcab F_{b}).
\end{eqnarray*}
\item \emph{Suppose $F_{a}= F_{b}=\emptyset$.}
\begin{eqnarray*}
(\dcacb)\alpha(c\emptyset)
&=&
(\dcacb)(\tv_{a}\otimes \tv_{b})=c\mathbb V_{a}\otimes \tv_{b}+\tv_{a}\otimes c\mathbb  V_{b}\\
&=&
\alpha(c\mathbb V_{a}+c\mathbb V_{b})=\alpha(c\mathbb V_{a+b+1})\\
&=&
\alpha(\dcab c\emptyset).
\end{eqnarray*}
\end{itemize}
\emph{As for the map $\beta \colon N^*(\Delta^{a+b+1})\to N^*(c\Delta^a)\otimes N^*(\Delta^b)$,} its description with Steenrod's convention writes,
$$\left\{\begin{array}{lcll}
\beta({F_{a}\ast F_{b}})&=&
{cF_{a}}\otimes {F_{b}}, &\text{ if } F_{b}\neq \emptyset, \text{ the case } F_{a}=\emptyset \text{ being included,}
\\
\beta({F_{a}})&=&
{F_{a}}\otimes {\mathbb V_{b}}.
\end{array}\right.$$
The proof of its compatibility with restriction maps is totally similar to the proof done for $\alpha$.
Therefore we are reduced to check the compatibility of $\beta$ with the differentials. As before, we list the different cases.
\begin{itemize}
\item \emph{Suppose $F_{a}\neq \emptyset$ and $F_{b}\neq \emptyset$.}
\begin{eqnarray*}
(\dcatb)(\beta(F_{a}\ast F_{b}))
&=&
(\dcatb)(cF_{a}\otimes F_{b})
= (\dca cF_{a})\otimes F_{b}+cF_{a}\otimes (\db F_{b})\\
&=&
(c\da F_{a})\otimes F_{b}+ cF_{a}\otimes (\db F_{b})\\
&=&
\beta((\da F_{a})\ast F_{b})+\beta (F_{a}\ast (\db F_{b}))\\
&=&
\beta(\dab (F_{a}\ast F_{b})).
\end{eqnarray*}
\item \emph{Suppose $F_{a}\neq \emptyset$ and $F_{b}= \emptyset$.}
\begin{eqnarray*}
(\dcatb)(\beta(F_{a}))
&=&
(\dcatb)(F_{a}\otimes \mathbb V_{b})=
(\dca F_{a})\otimes \mathbb V_{b}+ F_{a}\otimes (\db \mathbb V_{b})\\
&=&
(\da F_{a})\otimes \mathbb V_{b}+ (cF_{a})\otimes \mathbb V_{b}+0
=\beta(\da F_{a}+cF_{a})\\
&=&
\beta(\dab F_{a}).
\end{eqnarray*}
\item \emph{Suppose $F_{a}= \emptyset$ and $F_{b}\neq \emptyset$.}
\begin{eqnarray*}
(\dcatb)(\beta(F_{b}))
&=&
(\dcatb)(\tv_{a}\otimes F_{b})=
(c\mathbb V_{a})\otimes F_{b}+\tv_{a}\otimes (\db F_{b})\\
&=&
\beta(\mathbb V_{a}\ast F_{b})+\beta(\db F_{b})=\beta(\dab F_{b}).
\end{eqnarray*}
\end{itemize}
\end{proof}

\begin{proof}[Proof of \lemref{lem:ksii}]
The compatibilities with restriction maps and differentials being local, they are direct consequences of \lemref{lem:alphabeta}. We study now the behavior of $\xi_{i}$ with the perverse degrees.

We continue with the Steenrod's convention and begin with the expression of the perverse degree in this context. 
Let $F=F_{0}\otimes\cdots\otimes F_{n}$ be a tensor product of nonempty faces in 
$\widetilde{\Delta}=c\Delta^{j_{0}}\times\cdots\times c\Delta^{j_{n-1}}\times \Delta^{j_{n}}$.  In Steenrod's convention, we do not distinguish between $F$ and the tensor product of cochains, $1_{F_{0}}\otimes\cdots\otimes 1_{F_{n}}$.
We observe that, if a face $F_{k}$ of $c\Delta^{j_{k}}$ is not included in $\Delta^{j_{k}}$,
then the cochain $1_{F_{k}}$ restricts to 0 on the subcomplex $\Delta^{j_{k}}\times \{1\}$ of $c\Delta^{j_{k}}$.
Therefore, by \defref{def:tranversedegree}, the perverse degree of $F$ is given by
$$\|F_{0}\otimes\cdots \otimes F_{n}\|_{\ell}=\left\{
\begin{array}{cl}
-\infty&
\text{if } F_{n-\ell}\not\subset \Delta^{j_{n-\ell}},\\
|F_{n-\ell+1}|+\cdots+|F_{n}|&
\text{if } F_{n-\ell}\subset \Delta^{j{n-\ell}},
\end{array}\right.$$
for any $\ell \in \{1,\ldots,n\}$. A similar definition occurs for the blow-up $\widetilde{\Delta(i)}$ of $\Delta(i)$.

As $\xi_{i}$ is  compatible with the differentials, it is sufficient to prove that the image of a $\ov{p}$-admissible cochain is $\ov{p}$-admissible.
Let $\nabla=\nabla_{0}\otimes\cdots\otimes\nabla_{n}$ be a tensor product of faces of $\widetilde{\Delta(i)}$ such that
$$\|\nabla_{0}\otimes\cdots\otimes\nabla_{n}\|_{\ell}\leq \ov{p}(\ell), \text{ for any }\ell\in\{1,\ldots,n\}.$$
As we are dealing with $\Delta(i)$ (cf. (\ref{equa:deltai})), we have $\nabla_{i}=c\emptyset$ and,
with the notations of (\ref{equa:xii}), $\Phi_{ij}([\tv])=1$. Thus,
$$\xi_{i}(\nabla_{0}\otimes\cdots\otimes \nabla_{n})=\left\{
\begin{array}{cl}
\nabla_{0}\otimes\cdots\otimes \nabla_{i-1}\otimes 
\alpha(\nabla_{i+1})\otimes
\nabla_{i+2}\otimes\cdots\otimes \nabla_{n},&
\text{if }i\neq n-1,\\
\nabla_{0}\otimes\cdots\otimes \nabla_{n-2}\otimes \beta(\nabla_{n}),&
\text{if }i=n-1.
\end{array}\right.$$
The morphisms $\alpha$ and $\beta$ preserving the dimension of faces, it is sufficient to consider the following cases.
\begin{itemize}
\item Suppose $i\neq n-1$ and $n-\ell =i$. Then $\ell\neq 1$ and we have,
$$\begin{array}{lcl}
\|\xi_{i}(\nabla_{0}\otimes\cdots\otimes \nabla_{n})\|_{\ell}
&=&
\left\{\begin{array}{ll}
-\infty,&\!\!\!\!\!\!\!\!\!\!\!\!\!\!\!\!\!\!\!\!
 \text{ if }  \nabla_{i+1}=\nabla_{a}\ast \nabla_{b} \text{ with } \nabla_{b}\neq\emptyset, \\
&\!\!\!\!\!\!\!\!\!\!\!\!\!\!\!\!\!\!\!\!
\text{ or if }   \nabla_{i+1}=c(\nabla_{a}\ast\nabla_{b}),\\
\max(|\tv|,|\mathbb V|) +|\nabla_{n-\ell+2}|+\cdots+|\nabla_{n}|, &\text{ if } \nabla_{i+1}=\nabla_{a},\\
\end{array}\right.\\[.4cm]
&\leq&
\|\nabla_{0}\otimes\cdots\otimes \nabla_{n}\|_{\ell-1}\leq 
\ov{p}(\ell-1)\leq \ov{p}(\ell).
\end{array}
$$
We have used here that the perversity $\ov{p}$ is order-preserving and $|\tv|=|\mathbb V|=0$ in $c\Delta^{j_{n-\ell+1}}$.
\item Suppose $i\neq n-1$ and $n-\ell =i+1$. We have,
$$\begin{array}{lcl}
\|\xi_{i}(\nabla_{0}\otimes\cdots\otimes \nabla_{n})\|_{\ell}
&=&
\left\{
\begin{array}{cl}
-\infty&
\text{if } \nabla_{i+1}=c(\nabla_{a}\ast \nabla_{b}),\\
|\nabla_{n-\ell+1}|+\cdots+|\nabla_{n}|&
\text{if } \nabla_{i+1}=\nabla_{a}\ast\nabla_{b} \text{ or } \nabla_{i+1}=\nabla_{a},
\end{array}\right.\\[.4cm]
&\leq&
\|\nabla_{0}\otimes\cdots\otimes\nabla_{n}\|_{\ell}
\leq
\ov{p}(\ell).
\end{array}
$$
\item Suppose $i= n-1$ and $\ell =1$. We have,
$$\begin{array}{lcl}
\|\xi_{i}(\nabla_{0}\otimes\cdots\otimes \nabla_{n})\|_{1}
&=&
\left\{
\begin{array}{cl}
-\infty&
\text{if } \nabla_{n}=\nabla_{a}\ast\nabla_{b} \text{ with } \nabla_{b}\neq \emptyset,\\
|\mathbb V|&
\text{if } \nabla_{n}=\nabla_{a},
\end{array}\right.\\[.4cm]
&\leq &
0=\ov{p}(1).
\end{array}
$$
\end{itemize}
\end{proof}

\subsection{Steenrod's definition of $\text{cup}_{i}$-products in \cite{MR0022071}} 
Let ${L}$ be a finite simplicial complex, endowed with a partial order of its vertices
 such that the vertices of any simplex are simply ordered. 
 Let
 $F=(a_{0},\ldots,a_{k})$ and $G=(b_{0},\ldots,b_{\ell})$
 be two (ordered) simplices of ${L}$ and let $i\geq 0$ be an integer. The ordered pair,
 $(F,G)$,
 is called \emph{$i$-regular} if $F$ and $G$ have \emph{exactly} $(i+1)$  vertices in common, 
 $(c_{0},\ldots,c_{i})$,
 such that
 \begin{itemize}
 \item $c_{0}=b_{0}$,
 \item $c_{0}$ and $c_{1}$ are adjacent vertices in $F$,
 \item $\ldots$,
 \item $c_{j}$ and $c_{j+1}$ are adjacent in $F$ if $j$ is even and adjacent in $G$ if $j$ is odd,
 \item $\ldots$,
 \item $c_{i}$ is the last vertex of $F$ if $i$ is even and the last vertex of $G$ if $i$ is odd.
 \end{itemize}
 
 Denote by $F_{0}$ the face of $F$ spanned by its vertices lower than or equal to $c_{0}$ and by $F_{2j}$ the face of $F$ spanned by its vertices greater than or equal to $c_{2j-1}$ \emph{and} lower than or equal to $c_{2j}$, ($0<2j\leq i$).
 If $i$ is odd, let $F_{i+1}$ be the face of $F$ spanned by its vertices greater than or equal to $c_{i}$.
We do a similar decomposition for $G$, denoting by $G_{2j+1}$ ($1\leq 2j+1<i+1$) the face of $G$ spanned by its vertices greater than or equal to $c_{2j}$ \emph{and} lower than or equal to $c_{2j+1}$. If $i$ is even, let $G_{i+1}$ be the face spanned by the vertices greater than or equal to $c_{i}$. This gives the decompositions
 $$F=F_{0}\ast F_{2}\ast\cdots\ast F_{2s}\text{ and }
G=G_{1}\ast G_{3}\ast\cdots\ast G_{2s+(-1)^i},$$
with $2s=i$ if $i$ even and $2s=i+1$ if $i$ is odd.

Now, we denote by $G'_{2j+1}$ the face of $G_{2j+1}$ obtained by deleting the vertices $c_{2j}$ and $c_{2j+1}$. Moreover, if $i$ is even, let $G'_{i+1}$ be the face of $G_{i+1}$ obtained by deleting $c_{i}$. 

\begin{definition}\label{def:steenrodbysteenrod}
We define $F\cup_{i}G=0$ in the group of $(k+\ell -i)$-cochains, if the couple $(F,G)$ is not $i$-regular and, otherwise, by
$$F\cup_{i} G=F_{0}\ast G'_{1}\ast F_{2}\ast G'_{3}\ast\cdots \ast\left\{
\begin{array}{ll}
G'_{i+1}&\text{if } i \text{ even},\\
F_{i+1}&\text{if } i \text{ odd}.
\end{array}\right.$$
\end{definition}

\begin{example} We give an illustration of the cases $i$ even and $i$ odd in low dimensions.

1) The pair $(F=(a_{0},\ldots,a_{k}),G=(b_{0},\ldots,b_{\ell}))$ is $0$-regular, if $a_{k}=b_{0}$.
We write their vertices as follows,
$$\xymatrix{
F&\colon
& a_{0}\ar@{--}[r]
&a_{k}%
\ar@{=}[d]
&
\\
G&\colon
&&b_{0}%
\ar@{--}[r]
&b_{\ell}%
&
}$$
By definition,
$F\cup_{0}G=(a_{0},\ldots,a_{k},b_{1},\ldots,b_{\ell})%
$
is the (classical) cup-product.

2) The pair $(F,G)$ is $1$-regular, if they have two common vertices $(c_{0},c_{1})$ such that the vertices of $F$ and $G$ can be put in two lines, as follows
$$\xymatrix{
F&\colon
& a_{0}\ar@{--}[r]
&a_{k_{0}}=c_{0}\ar@{=}[d]
& a_{k_{0}+1}=c_{1}\ar@{=}[d]\ar@{--}[r]
&a_{k}\\
G&\colon
&&b_{0}=c_{0}\ar@{--}[r]
&b_{\ell}=c_{1}
&
}$$
By definition,
$F\cup_{1}G=(a_{0},\ldots,a_{k_{0}},b_{1},\ldots,b_{\ell-1}, a_{k_{0}+1},\ldots,a_{k})
$.

3) The pair $(F,G)$ is $2$-regular, if they have three common vertices $(c_{0},c_{1},c_{2})$ such that the vertices of $F$ and $G$ can be put in two lines as,
$$\xymatrix{
F&\colon&
a_{0}\ar@{--}[r]
&a_{k_{0}}=c_{0}\ar@{=}[d]
& a_{k_{0}+1}=c_{1}\ar@{=}[d]\ar@{--}[r]
&a_{k}=c_{2}\ar@{=}[d]
&
\\
G&\colon&
&b_{0}=c_{0}\ar@{--}[r]
&b_{\ell_{1}}=c_{1}
&b_{\ell_{1}+1}=c_{2}\ar@{--}[r]
&b_{\ell}
}$$
By definition,
$F\cup_{2}G=(a_{0},\ldots,a_{k_{0}},b_{1},\ldots,b_{\ell_{1}-1}, a_{k_{0}+1},\ldots,a_{k}, b_{\ell_{1}+2},\ldots,b_{\ell})
$.
\end{example}

For the convenience of the reader, we recall the next statement of Steenrod, written with the notations of this paper.

\begin{proposition}[{\cite[Theorem Page 295]{MR0022071}}]\label{prop:steenrodjoin}
The Steenrod squares verify the next properties,
\begin{enumerate}[(1)]
\item $F\cup_{i}(cG)=\left\{
\begin{array}{ll}
c(F\cup_{i}G),&
\text{if } i \text{ even,}\\
0,&\text{if } i \text{ odd,}
\end{array}\right.$
\item $(cF)\cup_{i}G=\left\{
\begin{array}{ll}
0,&
\text{if } i \text{ even,}\\
c(F\cup_{i}G),&\text{if } i \text{ odd,}
\end{array}\right.$
\item $(cF)\cup_{i}(cG)=c(F\cup_{i-1}G)$.
\end{enumerate}
\end{proposition}

The domains of the applications $\alpha$ and $\beta$ are the euclidean simplices
$\Delta^{a+b+1}=\Delta^a\ast\Delta^b$ and $c(\Delta^a\ast\Delta^b)$. Therefore, we need to study the $\text{cup}_{i}$-products in these complexes. The case of the cone, $c(\Delta^a\ast\Delta^b)$, can be deduced from the first one, $\Delta^{a+b+1}$, with \propref{prop:steenrodjoin}.
We order the vertices of $\Delta^{a+b+1}$ such that any vertex of $\Delta^a$ is lower to any vertex of $\Delta^b$. Also, the cone point, $\tv$,  is the greatest element of the set of vertices.
Recall from (\ref{equa:taucup}) the notation,
$$
F\cup_{i}^jG=\left\{\begin{array}{ll}
F\cup_{i}G&\text{ if } j \text{ is even,}\\
G\cup_{i}F&\text{ if } j \text{ is odd.}
\end{array}\right.
$$

\begin{lemma}\label{lem:cupijoin}
Let $F_{a}$, $G_{a}$ be (nonempty) faces of $\Delta^a$, $F_{b}$, $G_{b}$ be (nonempty) faces of $\Delta^b$. 
For any $i> 0$, we have,
in $\Delta^a\ast \Delta^b$,
\begin{equation}\label{equa:joincupi}
(F_{a}\ast F_{b})\cup_{i}(G_{a}\ast G_{b})=
\sum_{i_{1}+i_{2}=i-1} 
(F_{a}\cup_{i_{1}}G_{a})\ast (F_{b}\cup^{i_{1}+1}_{i_{2}}G_{b}).
\end{equation}
 Note that the right-hand side of the equality (\ref{equa:joincupi}) has at most one non-zero term.
\end{lemma}

If we set
$-\cup_{-1}-=0$,
the right hand side of (\ref{equa:joincupi}) is equal to zero in the case $i=0$.
Note also that $F_{a}$ and $G_{b}$ cannot have a common vertex, neither $F_{b}$ and $G_{a}$. 
Therefore, with the hypotheses of \lemref{lem:cupijoin}, the simplices $F_{a}\ast F_{b}$ and $G_{a}\ast G_{b}$ cannot have exactly
one vertex in common and respect the convention on the order of the vertices. 
As a consequence, the equality (\ref{equa:joincupi}) is also true for $i=0$, with the two sides equal to zero.

\begin{proof}
The $\text{cup}_{i}$-product of $F=F_{a}\ast F_{b}$ and $G=G_{a}\ast G_{b}$ is not zero only if 
$F$ and $G$ have $(i+1)$ vertices in common. 
Denote by $(x+1)$, with $0\leq x\leq i$,  the number of vertices in common for $F_{a}$ and $G_{a}$. 
Thus $F_{b}$ and $G_{b}$ have $(i-x)$ vertices in common.
We observe also that the only non-zero term of the right-hand side of (\ref{equa:joincupi}) corresponds to $i_{1}=x$.

\smallskip
\emph{Suppose $i$ even and $x$ odd.}
With the previous notations, we decompose,
\begin{eqnarray*}
F_{a}=F_{a,0}\ast\cdots\ast F_{a,x+1} &\text{and}& G_{a}=G_{a,1}\ast\cdots\ast G_{a,x},\\
F_{b}=F_{b,0}\ast\cdots\ast F_{b,i-x-1}&\text{and}& G_{b}=G_{b,1}\ast\cdots\ast G_{b,i-x}.
\end{eqnarray*}
Thus, we have
\begin{eqnarray*}
(F_{a}\ast F_{b})\cup_{i}(G_{a}\ast G_{b})&=&
F_{a,0}\ast G'_{a,1}\ast\cdots\ast G'_{a,x}\ast
F_{a,x+1}\ast F_{b,0}\ast G'_{b,1}\ast\cdots\ast G'_{b,i-x}\\
&=&
(F_{a} \cup_{x} G_{a})\ast (F_{b}\cup_{i-x-1} G_{b})\\
&=&
(F_{a} \cup_{x} G_{a})\ast (F_{b}\cup^{x+1}_{i-x-1} G_{b}).
\end{eqnarray*}

\smallskip
\emph{Suppose now $i$ even and $x$ even.}
We decompose
$$F_{a}=F_{a,0}\ast\cdots\ast F_{a,x} \text{ and }
G_{a}=G_{a,1}\ast\cdots\ast G_{a,x+1}.$$
Thus, we have
$$F_{a}\cup _{x}G_{a}= F_{a,0}\ast G'_{a,1}\ast\cdots\ast F_{a,x}\ast G'_{a,x+1}.$$
Note that $F_{a}\cup_{x}G_{a}$ contains all the vertices of $F\cup_{i}G$ belonging to $\Delta^a$. 
The first vertex in common between $F_{a}$ and $G_{a}$ is the first vertex of $G_{a}$.
The number of common points in $\Delta^a$ being the odd number $x+1$, the last vertex of
$\Delta^a$ in common must be the last vertex of $F_{a}$. 
Therefore, the first vertex of $\Delta^b$ in common is the first vertex of $F_{b}$.
(See \exemref{exam:xeven} for an illustration of this argument.)
Thus, for writing this final part of vertices
in $F\cup_{i}G$ as a $\text{cup}_{i-x-1}$-product, we have to decompose $F_{b}$ and $G_{b}$ as follows,
$$G_{b}=G_{b,0}\ast\cdots\ast G_{b,i-x} \text{ and }
F_{b}=F_{b,1}\ast\cdots\ast F_{b,i-x-1}.$$
We deduce
\begin{eqnarray*}
(F_{a}\ast F_{b})\cup_{i}(G_{a}\ast G_{b})&=&
F_{a,0}\ast G'_{a,1}\ast\cdots\ast G'_{a,x+1}\ast G_{b,0}\ast F'_{b,1}\ast\cdots\ast F'_{b,i-x-1}\ast G_{b,i-x}\\
&=&
(F_{a} \cup_{x} G_{a})\ast (G_{b}\cup_{i-x-1} F_{b})\\
&=&
(F_{a} \cup_{x} G_{a})\ast (F_{b}\cup^{x+1} _{i-x-1} G_{b}).
\end{eqnarray*}

\smallskip
\emph{In the case $i$ odd,} the conclusion is obtained with totally similar arguments.
\end{proof}

\begin{example}\label{exam:xeven}
We particularize with $x=2$ the argument done in the previous proof.
Let
$F_{a}\ast F_{b}=(f_{0}^a,\ldots,f^a_{\ell})\ast(f^b_{0},\ldots,f^b_{k})$
and
$G_{a}\ast G_{b}=(g_{0}^a,\ldots,g^a_{u})\ast(g^b_{0},\ldots,g^b_{v})$.
The following diagram represents
$(F_{a}\ast F_{b})\cup_{i}(G_{a}\ast G_{b})=
(F_{a}\cup_{2}G_{a})\ast (F_{b}\cup^3_{i-3}G_{b})=
(F_{a}\cup_{2}G_{a})\ast (G_{b}\cup_{i-3}F_{b})$.
$$\xymatrix{
F_{a}\ast F_{b}&\colon&
f^a_{0}\ar@{--}[r]
&%
\ar@{=}[d]
& 
\ar@{=}[d]\ar@{--}[r]
&f^a_{\ell}
\ar@{=}[d]
&&
f^b_{0}\ar@{=}[d]\ar@{--}[r]
&\ar@{=}[d]&
\\
G_{a}\ast G_{b}&\colon&
&
\ar@{--}[r]
&
&
\ar@{--}[r]
&
g^a_{u}\;g^b_{0}\ar@{--}[r]
&&\ar@{--}[r]&
}$$
\end{example}

\begin{proposition}\label{prop:XandXsteenrod}
Let $X$ be an $n$-dimensional PL-pseudomanifold, $\ov{p}$ and $\ov{q}$ be GM-perversities.
Then, the quasi-isomorphism, 
$\chi\colon \tN^*_{\bullet}(X^*)\to \tN^*_{\bullet}(X)$,
induced by $\nu\colon X\to X^*$ is compatible with the $\text{cup}_{i}$-products, i.e.,
$$\chi(\Phi\cup_{i}\Psi)=\chi(\Phi)\cup_{i}\chi(\Psi),$$
for any $i\geq 0$, $\Phi\in \tN^r_{\ov{p}}(X^*)$, $\Psi \in \tN^s_{\ov{q}}(X^*)$ and 
$\Phi\cup_{i}\Psi\in \tN^{r+s-i}_{\ov{p}\oplus\ov{q}}(X^{*})$ .
\end{proposition}

As $\text{cup}_{i}$-products on $\tN^*(-)$ are defined locally, it is sufficient to do the proof for an elementary amalgamation. Thus \propref{prop:XandXsteenrod} is a direct consequence of the next lemma.

\begin{lemma}\label{lem:alphabetacupi}
The two morphisms, $\alpha\colon N^*(c\Delta^{a+b+1})\to N^*(c\Delta^a)\otimes N^*(c\Delta^b)$ and\linebreak
 $\beta \colon N^*(\Delta^{a+b+1})\to N^*(c\Delta^a)\otimes N^*(\Delta^b)$,
 are compatible with the $\text{cup}_{i}$-products.
\end{lemma}

\begin{proof}
Consider the faces $F=F_{a}\ast F_{b}$ and $G=G_{a}\ast G_{b}$ of $\Delta^{a+b+1}$.

$\bullet$ Suppose first $F_{a}\neq \emptyset$, $F_{b}\neq \emptyset$, $G_{a}\neq \emptyset$, $G_{b}\neq \emptyset$ and begin with the map $\beta$. We have to prove,
\begin{equation}\label{equa:casbeta}
\beta((F_{a}\ast F_{b})\cup_{i}(G_{a}\ast G_{b}))=
\beta(F_{a}\ast F_{b})\cup_{i}\beta(G_{a}\ast G_{b}).
\end{equation}

From \lemref{lem:cupijoin} and the definition of $\beta$, we get
$$\beta((F_{a}\ast F_{b})\cup_{i}(G_{a}\ast G_{b}))=
\sum_{i_{1}+i_{2}=i-1}(c(F_{a}\cup_{i_{1}}G_{a})\otimes (F_{b}\cup^{i_{1}+1}_{i_{2}}G_{b}).$$
On the other side, we have
\begin{eqnarray*}
\beta(F_{a}\ast F_{b})\cup_{i}\beta(G_{a}\ast G_{b})
&=_{(1)}&
(cF_{a}\otimes F_{b})\cup_{i}(cG_{a}\otimes G_{b})\\
&=_{(2)}&
\sum_{k=0}^i (cF_{a}\cup_{k}cG_{a})\otimes 
(F_{b}\cup^k_{i-k}G_{b})\\
&=_{(3)}&
\sum_{k=1}^i c(F_{a}\cup_{k-1}G_{a})\otimes (F_{b}\cup^k_{i-k} G_{b})\\
&=&
\sum_{i_{1}+i_{2}=i-1}(c(F_{a}\cup_{i_{1}}G_{a}))\otimes  (F_{b}\cup^{i_{1}+1}_{i_{2}}G_{b}),
\end{eqnarray*}
where $=_{(1)}$ is the definition of $\beta$,  
$=_{(2)}$ comes from the structure of $\cE(2)$-algebra on a tensor product of $\cE(2)$-algebras, 
recalled in \secref{sec:E2algebras},
and $=_{(3)}$ is \cite[Formula (4.3)]{MR0022071}, recalled in \propref{prop:steenrodjoin}. 

\medskip
$\bullet$ With the same restriction, $F_{a}\neq \emptyset$, $F_{b}\neq \emptyset$, $G_{a}\neq \emptyset$, $G_{b}\neq \emptyset$, we study now the map~$\alpha$. 
The arguments are coming from \lemref{lem:cupijoin}, from the structure of $\cE(2)$-algebra on a tensor product and from
\propref{prop:steenrodjoin}, as before. In the sequel, we use them without an explicit recall.
We have only to study the cases where one of the faces contains the cone point, 
the other cases being already verified when we have considered the map $\beta$.
\begin{enumerate}[(i)]
\item Let $c(F_{a}\ast F_{b})$ and $c(G_{a}\ast G_{b})$ be faces of $c\Delta^{a+b+1}$. Then we have,
\begin{eqnarray*}
\alpha(c(F_{a}\ast F_{b})\cup_{i}c(G_{a}\ast G_{b}))
&=&
\alpha(c((F_{a}\ast F_{b})\cup_{i-1}(G_{a}\ast G_{b})))\\
&=&
\alpha\left(c\left(\sum_{i_{1}+i_{2}=i-2} (F_{a}\cup_{i_{1}}G_{a})\ast (F_{b}\cup^{i_{1}+1}_{i_{2}}G_{b})\right)\right)\\
&=&
\sum_{i_{1}+i_{2}=i-2}c(F_{a}\cup_{i_{1}}G_{a})\otimes c (F_{b}\cup^{i_{1}+1}_{i_{2}}G_{b}),
\end{eqnarray*}
and
\begin{eqnarray*}
\alpha(c(F_{a}\ast F_{b}))\cup_{i}\alpha(c(G_{a}\ast G_{b}))
&=&
(cF_{a}\otimes cF_{b})\cup_{i}(cG_{a}\otimes cG_{b})\\
&=&
\sum_{k=0}^i (cF_{a}\cup_{k}cG_{a})\otimes  (cF_{b}\cup^k_{i-k}cG_{b})\\
&=&
\sum_{k=1}^{i-1} c(F_{a}\cup_{k-1}G_{a})\otimes c (F_{b}\cup^k_{i-k-1}G_{b})\\
&=&
\sum_{i_{1}+i_{2}=i-2}
c(F_{a}\cup_{i_{1}}G_{a})\otimes c(F_{b}\cup^{i_{1}+1}_{i_{2}}G_{b}).
\end{eqnarray*}
The compatibility with $\text{cup}_{i}$-products is proved for these faces.
\item Let $c(F_{a}\ast F_{b})$ and $(G_{a}\ast G_{b})$ be faces of $c\Delta^{a+b+1}$. We have to prove that,
\begin{equation}\label{equa:casc}
\alpha(c(F_{a}\ast F_{b})\cup_{i}(G_{a}\ast G_{b}))=
\alpha(c(F_{a}\ast F_{b}))\cup_{i}\alpha(G_{a}\ast G_{b}).
\end{equation}
Observe first,
$\alpha(c(F_{a}\ast F_{b})\cup_{i}(G_{a}\ast G_{b})) =0 \text{ if } i \text{ \emph{even}}$, cf. \propref{prop:steenrodjoin}.
We study now the $\text{cup}_{i}$-product of the images by $\alpha$.
\begin{eqnarray*}
\alpha(c(F_{a}\ast F_{b}))\cup_{i}\alpha(G_{a}\ast G_{b})
&=&
(cF_{a}\otimes cF_{b})\cup_{i}(cG_{a}\otimes G_{b})\\
&=&
\sum_{k=0}^i (cF_{a}\cup_{k}cG_{a})\otimes (cF_{b}\cup^k_{i-k}G_{b}).
\end{eqnarray*}
We study the last right-hand side term in the case $i$ \emph{even.}
\begin{itemize}
\item[-] If $k$ is even, then
$(cF_{b}\cup^k_{i-k}G_{b})=cF_{b}\cup_{i-k}G_{b}=0$, since $i-k$ is even.
\item[-] If $k$ is odd, then
$(cF_{b}\cup^k_{i-k}G_{b})=G_{b}\cup_{i-k}cF_{b}=0$, since $i-k$ is odd.
\end{itemize}
Thus the equation (\ref{equa:casc}) is satisfied for $i$ \emph{even}.

Suppose now that  $i$ is \emph{odd.} The left-hand side of (\ref{equa:casc}) can be developed as,
\begin{eqnarray*}
\alpha(c(F_{a}\ast F_{b})\cup_{i}(G_{a}\ast G_{b}))
&=& 
\alpha(c((F_{a}\ast F_{b})\cup_{i} (G_{a}\ast G_{b})))\\
&=&
\alpha\left(c\left(\sum_{i_{1}+i_{2}=i-1}(F_{a}\cup_{i_{1}}G_{a})\ast  (F_{b}\cup^{i_{1}+1}_{i_{2}}G_{b})\right)\right)\\
&=&
\sum_{i_{1}+i_{2}=i-1}c(F_{a}\cup_{i_{1}}G_{a})\otimes c (F_{b}\cup^{i_{1}+1}_{i_{2}}G_{b}).
\end{eqnarray*}
We consider now the expression of the right-hand side of (\ref{equa:casc}) already obtained,
$$\sum_{k=0}^i (cF_{a}\cup_{k}cG_{a})\otimes (cF_{b}\cup^k_{i-k}G_{b}).$$
\begin{itemize}
\item[-] If $k$ is even, then
$(cF_{b}\cup^k_{i-k}G_{b})=cF_{b}\cup_{i-k}G_{b}=c(F_{b}\cup_{i-k}G_{b})$, since $i-k$ is odd.
\item[-] If $k$ is odd, then
$(cF_{b}\cup^k_{i-k}G_{b})=G_{b}\cup_{i-k}cF_{b}=c(G_{b}\cup_{i-k}F_{b})$, since $i-k$ is even.
\end{itemize}
In conclusion, we have proved,
$cF_{b}\cup^k_{i-k}G_{b}=c(F_{b}\cup_{i-k}^k G_{b})$
and
\begin{eqnarray*}
\alpha(c(F_{a}\ast F_{b}))\cup_{i}\alpha(G_{a}\ast G_{b})
&=&
\sum_{k=1}^i  c(F_{a}\cup_{k-1}G_{a})\otimes c(F_{b}\cup^k_{i-k}G_{b})\\
&=&
\sum_{i_{1}+i_{2}=i-1}c(F_{a}\cup_{i_{1}}G_{a})\otimes c(F_{b}\cup^{i_{1}+1}_{i_{2}}G_{b}).
\end{eqnarray*}
We have established the compatibility with $\text{cup}_{i}$-products in this case.
\item Let $(F_{a}\ast F_{b})$ and $c(G_{a}\ast G_{b})$  be  faces of $c\Delta^{a+b+1}$.
This situation is similar to the previous one.
\end{enumerate}

$\bullet$ We consider now the case where at least one of the subsets,
$F_{a}$, $F_{b}$, $G_{a}$, $G_{b}$,
 is the empty set and begin with the map $\beta$. 
 The verification follows the same routine than above but we cannot apply \lemref{lem:cupijoin} in this situation. 
 Therefore, we prove the compatibility with a direct computation of the two sides of the equality
 (\ref{equa:casbeta}).
 We list the different possibilities with the values of the left-hand side (\lhs) and of the right-hand side (\rhs). 
  If $F_{a}=F_{b}=\emptyset$ or $G_{a}=G_{b}=\emptyset$, 
 the expressions become trivial and we may focus on the cases below. 
 
 Before doing these verifications, we  note that $\mathbb V_{a}$ is a chain of vertices and, if $F_{a}\subset \Delta^a$ 
 is given, one (and only one) of theses vertices, say $a_{t}$, is the first vertex of $F_{a}$. This implies
 $\mathbb V_{a}\cup_{0}F_{a}=(a_{t})\cup_{0}F_{a}=F_{a}$. Similarly, we have
 $F_{a}\cup_{0}\mathbb V_{a}=F_{a}$ and $\mathbb V_{a}$ acts as a neutral element for $-\cup_{0}-$.
 Also, as $\tv_{a}\notin F_{a}$, we have $\tv_{a}\cup_{0}F_{a}=F_{a}\cup_{0}\tv_{a}=0$.

\begin{enumerate}[(1)]
\item $F_{a}=\emptyset$, $F_{b}\neq \emptyset$, $G_{a}=\emptyset$, $G_{b}\neq \emptyset$.\\
$\lhs=\beta(F_{b}\cup_{i}G_{b})=\tv_{a}\otimes (F_{b}\cup_{i}G_{b})$.\\
$\rhs=(\tv_{a}\otimes F_{b})\cup_{i} (\tv_{a}\otimes G_{b})=
(\tv_{a}\cup_{0}\tv_{a})\otimes (F_{b}\cup^0_{i}G_{b})=
\tv_{a}\otimes (F_{b}\cup_{i}G_{b})
$.
\item $F_{a}\neq\emptyset$, $F_{b}= \emptyset$, $G_{a}\neq\emptyset$, $G_{b}= \emptyset$.\\
$\lhs=\beta(F_{a}\cup_{i}G_{a})=(F_{a}\cup_{i}G_{a})\otimes (\tv_{b}+\mathbb V_{b})$.\\
$\rhs=(F_{a}\otimes(\tv_{b}+\mathbb V_{b}))\cup_{i}(G_{a}\otimes(\tv_{b}+\mathbb V_{b})=
(F_{a}\cup_{i}G_{a})\otimes ((\tv_{b}+\mathbb V_{b})\cup^i_{0}(\tv_{b}+\mathbb V_{b}))=
(F_{a}\cup_{i}G_{a})\otimes (\tv_{b} + \mathbb V_{b})$.
\item $F_{a}\neq\emptyset$, $F_{b}= \emptyset$, $G_{a}=\emptyset$, $G_{b}\neq \emptyset$.\\
$\lhs=\beta(F_{a}\cup_{i}G_{b})=0$.\\
$\rhs=(F_{a}\otimes (\tv_{b}+\mathbb V_{b}))\cup_{i}(\tv_{a}\otimes G_{b})=0$, because $F_{a}\cup_{k}\tv_{a}=0$ for any $k$.
\item $F_{a}=\emptyset$, $F_{b}\neq \emptyset$, $G_{a}\neq\emptyset$, $G_{b}= \emptyset$.\\
$\lhs=\beta(F_{b}\cup_{i}G_{a})=0$.\\
$\rhs=\beta(F_{b})\cup_{i}\beta(G_{a})=
(\tv_{a}\otimes F_{b})\cup_{i}(G_{a}\otimes(\tv_{b}+\mathbb V_{b}))=0$, because $\tv_{a}\cup_{k}G_{a}=0$ for any~$k$.
\item $F_{a}=\emptyset$, $F_{b}\neq \emptyset$, $G_{a}\neq\emptyset$, $G_{b}\neq \emptyset$.\\
$\lhs=\beta(F_{b}\cup_{i}(G_{a}\ast G_{b}))=0$, because $F_{b}\cap G_{a}=\emptyset$ and $G_{a}\neq \emptyset$.\\
$\rhs=\beta(F_{b})\cup_{i}\beta(G_{a}\ast G_{b})=(\tv_{a}\otimes F_{b})\cup_{i}(cG_{a}\otimes G_{b})=
(\tv_{a}\cup_{0}cG_{a})\otimes (F_{b}\cup_{i}G_{b})=0, \text{ because the cone point,} \,\tv_{a}, \text{is the greatest vertex}$.
\item $F_{a}\neq\emptyset$, $F_{b}= \emptyset$, $G_{a}\neq\emptyset$, $G_{b}\neq \emptyset$.\\
$\lhs=\beta(F_{a}\cup_{i}(G_{a}\ast G_{b}))=\left\{\begin{array}{ll}
\beta((F_{a}\cup_{i}G_{a})\ast G_{b})= c(F_{a}\cup_{i}G_{a})\otimes G_{b},&\text{if } i \text{ even},\\
0,&\text{if } i \text{ odd}.
\end{array}\right.$\\
$\rhs= \beta(F_{a})\cup_{i}\beta(G_{a}\ast G_{b})=
(F_{a}\otimes(\tv_{b}+\mathbb V_{b}))\cup_{i}(cG_{a}\otimes G_{b})=
(F_{a}\cup_{i}cG_{a})\otimes (\mathbb V_{b}\cup^i_{0}G_{b})=
\left\{\begin{array}{ll}
c(F_{a}\cup_{i}G_{a})\otimes G_{b},&\text{if } i \text{ even},\\
0,&\text{if } i \text{ odd}. 
\end{array}\right.$\\
The nullity when $i$ is odd comes from \propref{prop:steenrodjoin}.
\item $F_{a}\neq\emptyset$, $F_{b}\neq \emptyset$, $G_{a}=\emptyset$, $G_{b}\neq \emptyset$.\\
$\lhs=\beta((F_{a}\ast F_{b})\cup_{i}G_{b})=\beta(F_{a}\ast (F_{b}\cup_{i}G_{b}))=cF_{a}\otimes (F_{b}\cup_{i}G_{b})$.\\
$\rhs=\beta(F_{a}\ast F_{b})\cup_{i}\beta(G_{b})=
(cF_{a}\otimes F_{b})\cup_{i}(\tv_{a}\otimes G_{b})=
(cF_{a}\cup_{0}\tv_{a})\otimes (F_{b}\cup_{i}G_{b})=
cF_{a}\otimes (F_{b}\cup_{i}G_{b})$.
\item $F_{a}\neq\emptyset$, $F_{b}\neq \emptyset$, $G_{a}\neq\emptyset$, $G_{b}= \emptyset$.\\
$\lhs=\beta((F_{a}\ast F_{b})\cup_{i}G_{a})=\left\{
\begin{array}{ll}
0,&\text{if } i \text{ even},\\
\beta((F_{a}\cup_{i}G_{a})\ast F_{b})=c(F_{a}\cup_{i}G_{a})\otimes F_{b},&\text{if } i \text{ odd}.
\end{array}\right.$\\
$\rhs=\beta(F_{a}\ast F_{b})\cup_{i}\beta(G_{a})=
(cF_{a}\otimes F_{b})\cup_{i}(G_{a}\otimes (\tv_{b}+\mathbb V_{b}))=
(cF_{a}\cup_{i}G_{a})\otimes(F_{b}\cup^i_{0}\mathbb V_{b})=
\left\{\begin{array}{ll}
0,&\text{if } i \text{ even},\\
c(F_{a}\cup_{i}G_{a})\otimes F_{b},&\text{if } i \text{ odd},
\end{array}\right.$\\
with the argument already used in the case (6).

\end{enumerate}

$\bullet$ The end of the proof is concerned with the map $\alpha$ when at least one of the subsets, $F_{a}$, $F_{b}$, $G_{a}$, $G_{b}$ is the empty set.
Computations are similar to the previous ones.
\end{proof}
%

\bibliographystyle{amsplain}
\bibliography{IntersectionSteenrod}

\end{document}